\documentclass[aap, noinfoline]{imsart}
\RequirePackage[OT1]{fontenc}
\RequirePackage{amsthm,amsmath, amssymb}
\usepackage{graphicx} 
\usepackage[all]{xy}
\usepackage{color}
\RequirePackage[numbers]{natbib}
\usepackage{mathrsfs}

\RequirePackage[colorlinks,citecolor=blue,urlcolor=blue]{hyperref}

\setattribute{journal}{name}{}


\startlocaldefs
\numberwithin{equation}{section}
\theoremstyle{plain}
\newtheorem{thm}{Theorem}[section]
\newtheorem{lem}[thm]{Lemma}
\newtheorem{cor}[thm]{Corollary}
\newtheorem{prop}[thm]{Proposition}

\theoremstyle{remark}
\newtheorem{rem}[thm]{Remark}
\newtheorem{ex}[thm]{Example}

\theoremstyle{definition}
\newtheorem{defn}[thm]{Definition}




\newcommand{\bF}{{\bf F}}

\newcommand{\bN}{{\bf N}}

\newcommand{\bR}{{\bf R}}

\newcommand{\bZ}{{\bf Z}}
\newcommand{\C}{{\mathbb{C}}}

\newcommand{\E}{{\mathbb{E}}}

\newcommand{\K}{{\mathbb{K}}}
\newcommand{\N}{{\mathbb{N}}}
\newcommand{\R}{{\mathbb{R}}}

\newcommand{\Z}{{\mathbb{Z}}}



\newcommand{\cA}{{\mathcal A}}

\newcommand{\cF}{{\mathcal F}}

\newcommand{\cI}{{\mathcal I}}

\newcommand{\cM}{{\mathcal M}}
\newcommand{\cN}{{\mathcal N}}

\newcommand{\cP}{{\mathcal P}}

\newcommand{\cR}{{\mathcal R}}
\newcommand{\cS}{{\mathcal S}}

\newcommand{\cW}{{\mathcal W}}
\newcommand{\cX}{{\mathcal X}}


\newcommand{\cech}{\mbox{\rm{\v{C}ech} }}



\DeclareMathOperator{\rank}{rank}
\DeclareMathOperator{\image}{im}
\DeclareMathOperator{\kernel}{ker}




\newcommand{\id}{\mathop{\mathrm{id}}\nolimits}

\newcommand{\diam}{{\rm diam}}

\DeclareMathOperator{\support}{supp}

\newcommand\mfA{\mathscr{A}}
\newcommand\bdd{\mathscr{B}}
\newcommand\mfB{\mathscr{B}}
\newcommand\mfC{\mathscr{C}}

\newcommand\finite{\mathscr{F}}

\newcommand\bra{\langle}
\newcommand\ket{\rangle}

\newcommand\La{\Lambda}
\newcommand\PP{\mathbb{P}}

\newcommand\Rbar{\overline{\bR}}

\newcommand\fp{\Xi}
\newcommand\phq{\mathrm{PH}_q}
\newcommand\eps{\varepsilon}

 \newcommand\lref[1]{Lemma~\ref{#1}}
 \newcommand\pref[1]{Proposition~\ref{#1}}
 \newcommand\cref[1]{Corollary~\ref{#1}}

\newcommand\U{\mathbb U}
\newcommand\V{\mathbb V}

\newcommand\Var{{\rm Var\,}}
\newcommand\Ex{{\mathbb E\,}}
\newcommand\dto{\overset{d}{\to}}
\newcommand\vto{\overset{v}{\to}}

\newcommand\one{\mathbf 1}
\newcommand\rM{\mathfrak M}
\newcommand\rN{\mathfrak N}

\endlocaldefs

\begin{document}

\begin{frontmatter}
\title{Limit theorems for persistence diagrams}
\runtitle{Limit theorems for persistence diagrams}

\begin{aug}
\author{\fnms{Trinh Khanh Duy}\thanksref{m1}\ead[label=e1]{trinh@imi.kyushu-u.ac.jp}},
\author{\fnms{Yasuaki Hiraoka}\thanksref{m2}\ead[label=e2]{hiraoka@wpi-aimr.tohoku.ac.jp}}
\and
\author{\fnms{Tomoyuki Shirai}\thanksref{m1}
\ead[label=e3]{shirai@imi.kyushu-u.ac.jp}}

\runauthor{T. K. Duy, Y. Hiraoka and T. Shirai}

\affiliation{Kyushu University\thanksmark{m1} and Tohoku University\thanksmark{m2}}

\address{
Institute of Mathematics for Industry\\
Kyushu University\\
744, Motooka, Nishi-ku\\ Fukuoka, 819-0395, Japan\\
\printead{e1}\\
\phantom{E-mail:\ }\printead*{e3}}

\address{Advanced Institute for Materials Research\\
Tohoku University\\
2-1-1, Katahira, Aoba-ku\\ Sendai, 980-8577, Japan\\
\printead{e2}}
\end{aug}
\vspace{0.3cm}
\today
\begin{abstract}
The persistent homology of a stationary point process on ${\bf R}^{N}$ is studied in this paper. As a generalization of continuum percolation theory, we study higher dimensional topological features of the point process such as loops, cavities, etc. in a multiscale way. The key ingredient is the persistence diagram, which is an expression of the persistent homology. We prove the strong law of large numbers for persistence diagrams as  the window size tends to infinity and give a sufficient condition for the limiting persistence diagram to have the full support. We also discuss a central limit theorem for persistent Betti numbers. 
\end{abstract}

\begin{keyword}[class=MSC]
\kwd[Primary ]{60K35}
\kwd{60B10}
\kwd[; secondary ]{55N20}
\end{keyword}

\begin{keyword}
\kwd{point process}
\kwd{persistence diagram}
\kwd{persistent Betti number}
\kwd{random topology}
\end{keyword}

\end{frontmatter}

\section{Introduction}\label{sec:intro}
\subsection{Background}
The prototype of this work dates back to the random geometric graphs. 
In those original settings, a set $V$ of points is randomly scattered in
a space according to some probability distribution, and a graph with the
vertices $V$ is constructed by assigning edges whose distances are less
than a certain threshold value $r \ge 0$. 
Then, some characteristic features in the graph such as connected components and loops are broadly and thoroughly studied (see e.g. \cite{p}). Furthermore, the random geometric graphs provide mathematical models for applications such as 
mobile wireless networks \cite{wireless1,wireless2}, epidemics \cite{epidemics1}, and so on.

Recently, the concept of random topology has emerged and rapidly grown as a higher dimensional generalization of random graphs \cite{bk,kahle2}. One of the simple models studied in random topology is simplicial complex, which is given by a collection of subsets closed under inclusion. Obviously, a graph is regarded as a one dimensional simplicial complex consisting of singletons as vertices and doubletons as edges.

In geometric settings, a simplicial complex is built over randomly 
distributed points in a space by a certain rule respecting the nearness
of multiple points, like random geometric graphs. Two standard
simplicial complex models constructed from the points are \v{C}ech
complexes and Rips complexes, which are also determined by a threshold
value $r$ measuring the nearness of points. Then, in such an
extended geometric object, it is natural to study higher dimensional
topological features such as cavities (2 dim.) and more general
$q$-dimensional holes, beyond connected components (0 dim.) and loops (1
dim.).  

In algebraic topology, $q$-dimensional holes are usually characterized by using the so-called homology. 
Here, the $q$th homology of a simplicial complex is given by a
vector space and its dimension is called the Betti number which counts
the number of $q$-dimensional holes. 
Hence, in the setting of random simplicial complexes, the Betti numbers
become random variables through a random point configuration, and studying the asymptotic behaviors of the randomized Betti numbers is a significant problem for understanding global topological structures embedded in the random simplicial complexes (e.g., \cite{d,kahle1,oa,ya,ysa}).

On the other hand, another type of generalizations has been recently attracting much attention in applied topology. In that setting, we are interested in how persistent the holes are for changing the threshold parameter $r\in\bR$. Namely, we deal with one parameter filtration of simplicial complexes obtained by increasing the parameter $r$ and characterize robust or noisy holes in that filtration. The persistent homology \cite{elz,zc}
is a tool invented for this purpose, and especially, its expression
called persistence diagram is now applied to a wide variety of applied
areas (see e.g., \cite{carlsson,protein,pnas,nanotechnology, natcom}). 
From this point of view, there have been some works on a functional of
persitence diagram, called lifetime sum or total persistence, for 
random complexes (that are not geometric in the sense above)
such as Linial-Meshulam
processes and random cubical complexes (e.g., \cite{hs_frieze, hs_tutte, ht}).  

Therefore, it is natural to further extend the results on random 
geometric simplicial complexes to this generality, and the purpose of
this paper is to show several these extensions. In particular, we are
interested in asymptotic behaviors of persistence diagrams
themselves defined on staionary point processes.  
These subjects are mathematically meaningful in its own right, but are also interesting for practical applications. 
For example, the paper \cite{pnas} studies topological and geometric structures of atomic configurations in glass materials by comparing persistence diagrams with those of disordered states. 
By regarding atomic configurations in disordered states as random point processes, further understanding of those persistence diagrams will be useful for characterizing geometry and topology of glass materials, which is one of the important research topics in current physics.

\subsection{Prior work}

Let $\Phi$ be a stationary point process on $\bR^{N}$ with all finite
moments, i.e., 
\begin{equation}
	\Ex[\Phi(A)^k ] < \infty, \text{ for all bounded Borel sets $A$
	and any $k = 1,2,\dots$}. 
\label{eq:moment_condition} 
\end{equation}
Here $\Phi(A)$ denotes the number of points in $A$. For simplicity,
we always assume that $\Phi$ is \textit{simple}, i.e.,
\[
 \PP(\text{$\Phi(\{x\}) \le 1$ for every $x \in \bR^{N}$}) = 1. 
\]
We denote by $\Phi_{\La_L}$ the restriction of $\Phi$ on $\La_L =
[-\frac{L}{2}, \frac{L}{2})^{N}$.  

Let $C(\Phi_{\Lambda_L},r)$ be the \v{C}ech complex built over the points
$\Phi_{\Lambda_L}$ with a parameter $r >0$ (see Section \ref{sec:gm} for
the definition). Limiting behaviors of the Betti numbers
$\beta_q(C(\Phi_{\La_L}, r)) \ (q=1,2,\dots, N-1)$
have been widely investigated \cite{ya, ysa}.
Among them, we here restate the most related results to this paper.

\begin{thm}[{\cite[Lemma 3.3 and Theorem 3.5]{ysa}}]\label{thm:ysa1}
Assume that $\Phi$ is a stationary point process on $\bR^{N}$ having all
 finite moments. Then there exists a constant $\hat \beta^{r}_q \geq 0$
 such that  
\[
	\frac{\E[\beta_q(C(\Phi_{\Lambda_L},r))] }{L^{N}} \to \hat \beta^{r}_q \text{ as } L \to \infty.
\] 
In addition, if $\Phi$ is ergodic, then 
\[
	\frac{\beta_q (C(\Phi_{\Lambda_L},r)) }{L^{N}} \to \hat \beta^{r}_q  \text{ almost surely as } L \to \infty.
\] 
\end{thm}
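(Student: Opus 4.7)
The strategy is to show that the Betti number functional is approximately additive over a partition of $\La_L$ into smaller cubes, then combine a Fekete-type argument for the expectation with the multi-parameter pointwise ergodic theorem for the almost-sure statement. Fix $L$ large and an auxiliary scale $\ell$ with $k:=L/\ell\in\N$; partition $\La_L$ into $k^N$ translates $Q_1,\dots,Q_{k^N}$ of $\La_\ell$, and let $G$ be the union of the internal walls separating them. Since \v{C}ech simplices have diameter at most $2r$, any simplex of $C(\Phi_{\La_L},r)$ not contained in a single $Q_j$ has all its vertices in the $2r$-tube $T$ of $G$. Letting $S_p$ denote the number of such ``crossing'' $p$-simplices, the elementary fact that adding or removing a single $p$-simplex alters $\beta_{p-1}$ and $\beta_p$ by at most $1$ gives
\[
\Bigl|\beta_q(C(\Phi_{\La_L},r))-\sum_{j=1}^{k^N}\beta_q(C(\Phi_{Q_j},r))\Bigr|\le S_q+S_{q+1},
\]
and combining $|T|=O(k^N\ell^{N-1})$ with the finite-moment hypothesis \eqref{eq:moment_condition} yields $\Ex[S_p]\le C_{p,r}\,k^N\ell^{N-1}$ for $p=q,q+1$.

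For the expectation statement, stationarity identifies $\Ex[\beta_q(C(\Phi_{Q_j},r))]$ with $\Ex[\beta_q(C(\Phi_{\La_\ell},r))]$ for every $j$, so dividing the above inequality by $L^N=k^N\ell^N$ produces
\[
\frac{\Ex[\beta_q(C(\Phi_{\La_L},r))]}{L^N}=\frac{\Ex[\beta_q(C(\Phi_{\La_\ell},r))]}{\ell^N}+O(\ell^{-1}).
\]
Hence $a_\ell:=\Ex[\beta_q(C(\Phi_{\La_\ell},r))]/\ell^N$ is Cauchy along integer multiples and converges to some $\hat\beta_q^r\ge 0$. For $L$ not an integer multiple of $\ell$, set $k=\lceil L/\ell\rceil$; then $C(\Phi_{\La_L},r)\subseteq C(\Phi_{\La_{k\ell}},r)$ and the number of simplices of the larger complex involving a vertex from $\La_{k\ell}\setminus\La_L$ is again controlled by the moment hypothesis, giving $\bigl|\Ex[\beta_q(C(\Phi_{\La_L},r))]-\Ex[\beta_q(C(\Phi_{\La_{k\ell}},r))]\bigr|=O(\ell L^{N-1})$, which is negligible after dividing by $L^N$. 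This finishes the first assertion.

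For almost-sure convergence in the ergodic case, fix $\ell$ and view both $\{\beta_q(C(\Phi_{Q_j},r))\}_j$ and $\{S_p\}$ as stationary integrable fields indexed by $\Z^N$. The multi-parameter pointwise ergodic theorem gives, almost surely,
\[
\frac{1}{k^N}\sum_{j=1}^{k^N}\beta_q(C(\Phi_{Q_j},r))\longrightarrow\Ex[\beta_q(C(\Phi_{\La_\ell},r))],\qquad \frac{S_p}{k^N}\longrightarrow\Ex[S_p]\le C\ell^{N-1}.
\]
After dividing by $\ell^N$ and applying the boundary-correction bound pointwise,
\[
\limsup_{k\to\infty}\Bigl|\frac{\beta_q(C(\Phi_{\La_{k\ell}},r))}{(k\ell)^N}-a_\ell\Bigr|\le C/\ell\quad\text{a.s.}
\]
Intersecting over a countable sequence $\ell_m\to\infty$, and using the slab estimate above to handle values of $L$ that are not integer multiples of $\ell_m$, yields the claimed a.s.\ convergence to $\hat\beta_q^r$.

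The principal technical point is this \emph{joint} limit: one needs the approximate-additivity error $O(\ell^{-1})$ to hold uniformly in $L$ on a single full-measure event while $\ell\to\infty$. The finite-moment hypothesis is essential, as it supplies the $L^1$ control of the tube-count field required to upgrade the mean estimate on $S_p$ to an almost-sure one via the ergodic theorem.
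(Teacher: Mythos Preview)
Your proof is correct and follows essentially the same route as the paper. Note that this theorem is quoted as prior work from \cite{ysa}; the paper's own argument appears in the proof of the generalization Theorem~\ref{thm:LLN-Betti} (Section~\ref{sec:LLN-Betti}), and it matches your outline almost step for step: partition $\La_{mM}$ into $m^N$ translates of $\La_M$, bound the Betti-number defect between the full complex and the disjoint union by the number of $q$- and $(q{+}1)$-simplices meeting the $\rho(s)$-tube of the internal walls (via Lemma~\ref{lem:multi_add}, which is exactly your ``add one simplex changes $\beta$ by at most one'' observation iterated), obtain the $O(M^{-1})$ Fekete-type estimate for the mean, handle non-multiple $L$ by a slab bound, and then run the multi-dimensional ergodic theorem on both the sub-cube Betti numbers and the tube counts to get the a.s.\ limit.
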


\begin{thm}[{\cite[Theorem 4.7]{ysa}}]\label{thm:ysa2}
	Assume that $\Phi$ is a homogeneous Poisson point process on
 $\bR^{N}$ with unit intensity. Then, there exists a constant $\sigma_{r}^2 > 0$ such that 
	\[
		\frac{\beta_q (C(\Phi_{\Lambda_L},r)) - \E[\beta_q (C(\Phi_{\Lambda_L},r)) ]}{L^{N/2}} \dto \cN(0, \sigma_{r}^2)\text{ as } L \to \infty.
	\]
Here $\cN(\mu, \sigma^2)$ denotes the normal distribution with mean $\mu$ and variance $\sigma^2$, and $\dto$ denotes the convergence in distribution of random variables.
\end{thm}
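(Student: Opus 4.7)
The plan is to establish the CLT via a stabilization argument for Poisson functionals, in the spirit of Penrose and Yukich. The starting point is that although $\beta_q(C(\Phi_{\Lambda_L}, r))$ is not an additive functional of $\Phi$, it has a tractable add-one cost: when one extra point $x$ is inserted, the new simplices of $C(\Phi \cup \{x\}, r)$ are all incident to $x$ and involve only points in $B(x, 2r)$. Since adjoining a single simplex can alter any Betti number by at most one, we obtain
\[
|\beta_q(C(\Phi \cup \{x\}, r)) - \beta_q(C(\Phi, r))| \leq N_x(\Phi),
\]
where $N_x(\Phi)$ is the number of simplices of $C(\Phi \cup \{x\}, r)$ containing $x$. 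For unit-intensity Poisson input, $N_x(\Phi)$ is dominated by a polynomial in $\Phi(B(x,2r))$ and hence has finite moments of all orders.

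For the CLT itself I would introduce a discrete martingale from an enumeration of unit cubes partitioning $\Lambda_L$: let $\cF_i$ be the $\sigma$-algebra generated by the restriction of $\Phi$ to the first $i$ cubes, and set $D_i = \E[\beta_q \mid \cF_i] - \E[\beta_q \mid \cF_{i-1}]$. The add-one cost bound, combined with the geometric locality of $C(\cdot, r)$ (only points within distance $2r$ of a cube can influence simplices meeting that cube), yields moment bounds on $D_i$ uniform in $L$. The total variance $\sum_i \E[D_i^2]$ scales like $L^{N}$ by stationarity. I would then verify a Lindeberg (or fourth moment) condition using these bounds and invoke the martingale CLT to conclude asymptotic normality after normalisation by $L^{N/2}$.

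The step I expect to be the main obstacle is proving the strict positivity $\sigma_r^2 > 0$. A naive locality argument gives only $\sigma_r^2 \geq 0$, and systematic cancellation among conditional contributions from different regions must be ruled out. The standard route is to exhibit a bounded local configuration in a single unit cube that genuinely alters $\beta_q$ (for instance a shell-like cluster of points creating or destroying a $q$-dimensional hole), then to decouple such events across well-separated cubes using the independence property of the Poisson process, and finally to verify that the conditional expectation of the induced change does not vanish. Constructing this variance-generating event and controlling its effect uniformly in the surrounding configuration is the delicate part; once it is in place, the rest of the argument is routine.
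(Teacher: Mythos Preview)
This theorem is quoted from \cite{ysa} and is not proved in the present paper; the paper instead proves the generalisation Theorem~\ref{thm:CLT} (via Theorem~\ref{thm:CLT_general}) in Section~\ref{sec:CLT-pbetti}, and that argument specialises to $r=s$ to give the CLT here, though \emph{without} the strict positivity $\sigma_r^2>0$, which the paper explicitly leaves open in the general $\kappa$-setting.

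Your strategy is in the right spirit but misses the point the paper identifies as essential. The paper does not run a martingale CLT by hand; it applies the Penrose--Yukich theorem (Lemma~\ref{lem:py}) as a black box and puts all the work into verifying \emph{weak stabilization}: that $D_0\beta_q^{r,s}(\K(\cP_{A_n}))$ converges almost surely to a limit $D(\infty)$ along any window sequence $A_n\nearrow\bR^N$. This is achieved by writing $D_0\beta_q^{r,s}$ as a difference of ranks $\rank Z_q(K'_{r,a})/Z_q(K_{r,a})$ and $\rank (Z_q(K'_{r,a})\cap B_q(K'_{s,a}))/(Z_q(K_{r,a})\cap B_q(K_{s,a}))$ and showing each is integer-valued, bounded (by the number of simplices through $0$, which freezes once $a\ge\rho(s)$), and non-decreasing in $a$ via an injectivity argument (Lemma~\ref{lem:stabilization}, Proposition~\ref{prop:weakstab}). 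Your add-one moment bound matches the paper's verification of the fourth-moment condition, but you never touch stabilization. In your martingale scheme this is precisely the gap hidden in the sentence ``$\sum_i\E[D_i^2]$ scales like $L^N$ by stationarity'': stationarity gives equal \emph{marginal} laws for interior cubes, but the martingale CLT needs $L^{-N}\sum_i \E[D_i^2\mid\cF_{i-1}]\to\sigma^2$ in probability, and for that you must show the local effect of resampling a cube stabilises as the rest of the configuration is revealed. Without a stabilization step your argument does not close.

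On $\sigma_r^2>0$: you correctly flag this as the hard part, and your outline (construct a local $q$-cycle-creating configuration, then decouple across well-separated cubes via Poisson independence) is the standard route. The paper does not carry this out; it remarks that \cite{ysa} obtained positivity for binomial processes under an extra condition on $r$ via Mayer--Vietoris, and leaves the general Poisson case open.
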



The purpose of this paper is to extend Theorem~\ref{thm:ysa1} to the
setting on persistence diagrams and Theorem~\ref{thm:ysa2} to persistent
Betti numbers. 

\subsection{Main results}

In this paper, we study the following simplicial complex model for the point process $\Phi$ which is a generalization of the \v{C}ech complex and the Rips complex. 

Let $\finite(\bR^{N})$ be the collection of all finite (non-empty) subsets in
$\bR^{N}$. We can identify $\finite(\bR^{N})$ with the set
$\sqcup_{k=1}^{\infty}(\bR^{N})^k \slash \sim$, where $\sim$ is the
equivalence relation induced by permutations of coordinates. For a function
$f$ on $\finite(\bR^{N})$, there exists a permutation invariant function
$\tilde{f}_k$ on $(\bR^{N})^k$ for each $k \ge 1$ such that
$f(\{x_1,\dots, x_k\}) = \tilde{f}_k(x_1,\dots,x_k)$.
We say that $f$ is measurable
if so is $\tilde{f}_k$ on $(\bR^{N})^k$ for each $k \ge 1$. 

Let $\kappa \colon \finite(\bR^{N})\rightarrow[0,\infty]$ be a measurable
function satisfying 
\begin{itemize}
\item[(K1)] $0 \le \kappa(\sigma) \le \kappa(\tau)$, if $\sigma$ is a subset of $\tau$;
\item[(K2)] $\kappa$ is translation invariant, i.e., $\kappa(\sigma + x) = \kappa(\sigma)$ for any $x\in \bR^{N}$,
where $\sigma + x := \{y + x : y \in \sigma\}$;
\item[(K3)] there is an increasing function 
       $\rho \colon [0, \infty] \to [0, \infty]$ with
       $\rho(t) < \infty$ for $t < \infty$ such that
	\[
		\|x - y\| \le \rho(\kappa(\{x,y\})),
	\]
	where $\|x\|$ denotes the Euclidean norm in $\bR^{N}$. 
\end{itemize}
Without loss of generality, we can assume $\kappa(\{x\})
= 0$  because of the translation invariance.

\begin{ex}\label{ex:kappas}
Two important examples of $\kappa$ which we have in mind are 
\begin{align}
\kappa_C(\{x_0,x_1,\dots,x_q\}) &= \inf_{w \in \bR^{N}} \max_{0 \le i \le
q} \|x_i - w\|, \\   
  \kappa_R(\{x_0,x_1,\dots,x_q\}) &= \max_{0 \le i < j \le q}
\frac{\|x_i - x_j\|}{2},  
\end{align}
which define the \cech filtration $\C(\Phi)=\{C(\Phi,t)\}_{t\geq 0}$ and the
 Rips filtration $\R(\Phi)=\{R(\Phi,t)\}_{t\geq 0}$, respectively.
Both $\kappa$'s satisfy Assumption (K3) with $\rho(t) = 2t$.  
 See also Section~\ref{sec:gm} for these filtrations. 
\end{ex}

Given such a function $\kappa$, we construct a filtration $\K(\fp) =
\{K (\fp, t) : 0\leq t<\infty\}$ of simplicial complexes from
a finite point configuration $\Xi \in \finite(\bR^{N})$ by 
\begin{align}\label{eq:sc_model}
	K(\fp, t) = \{ \sigma \subset \fp : \kappa(\sigma) \le t
\}, 
\end{align}
i.e., $\kappa(\sigma)$ is the birth time of a simplex $\sigma$ in the
filtration $\K(\fp)$. 
Although we do not explicitly show the dependence on $\kappa$ in the
notation $\K(\fp)$ because the function $\kappa$ is fixed in the paper, we
here call it the \textit{$\kappa$-filtration} over $\fp$. 

For the filtration $\K(\fp)$,
we denote its $q$th persistence diagram by 
\[
D_{q}(\fp) = \{(b_i,d_i)\in\Delta : i=1,\dots,n_q\},
\] 
which is given by a multiset on $\Delta=\{(x,y)\in\Rbar^2  : 0\leq
x< y \le \infty\}$ determined from the unique decomposition of the persistent
homology (see (\ref{eq:pd}) for the definition).
The pair $(b_i,d_i)$ indicates the persistence of the $i$th
homology class, i.e., it appears at $b_i$ and disappears at $d_i$, and
$d_i = \infty$ means that the $i$th homology class persists forever.

In this paper, we deal with the persistence diagram $D_q(\fp)$
as the counting measure 
\[
	\xi_{q}(\fp) = \sum_{(b_i,d_i)\in D_{q}(\fp)} \delta_{(b_i, d_i)},
\]
rather than as a multiset,
where $\delta_{(x,y)}$ is the Dirac measure at $(x,y)\in\Rbar^2$.

For each $L >0$, we define a random filtration built over the points $\Phi_{\Lambda_L}$ and denote it by 
$\K(\Phi_{\Lambda_L})=\{K(\Phi_{\Lambda_L},t)\}_{t\geq 0}$.
We write $\xi_{q,L}$ for the point process $\xi_q(\Phi_{\La_L})$ and
$\E[\xi_{q,L}]$ for its mean measure (see Section~\ref{sec:rm} for the
precise definition of mean measure).
 
\begin{ex}
The top three panels in Figure \ref{fig:pppd} show point processes with
 negative (Ginibre), zero (Poisson), and positive (Poisson cluster)
 correlations, respectively. 
All point processes consist of $1,000,000$ points with the density
 $1/2\pi$, and only restricted areas of them are visualized. The bottom shows the corresponding normalized persistence
 diagrams $\xi_{1,L}/L^2$ of the {\v C}ech filtrations applied to the
 above, respectively.
\begin{figure}[h]
 \begin{center}
  \includegraphics[width=0.32\hsize]{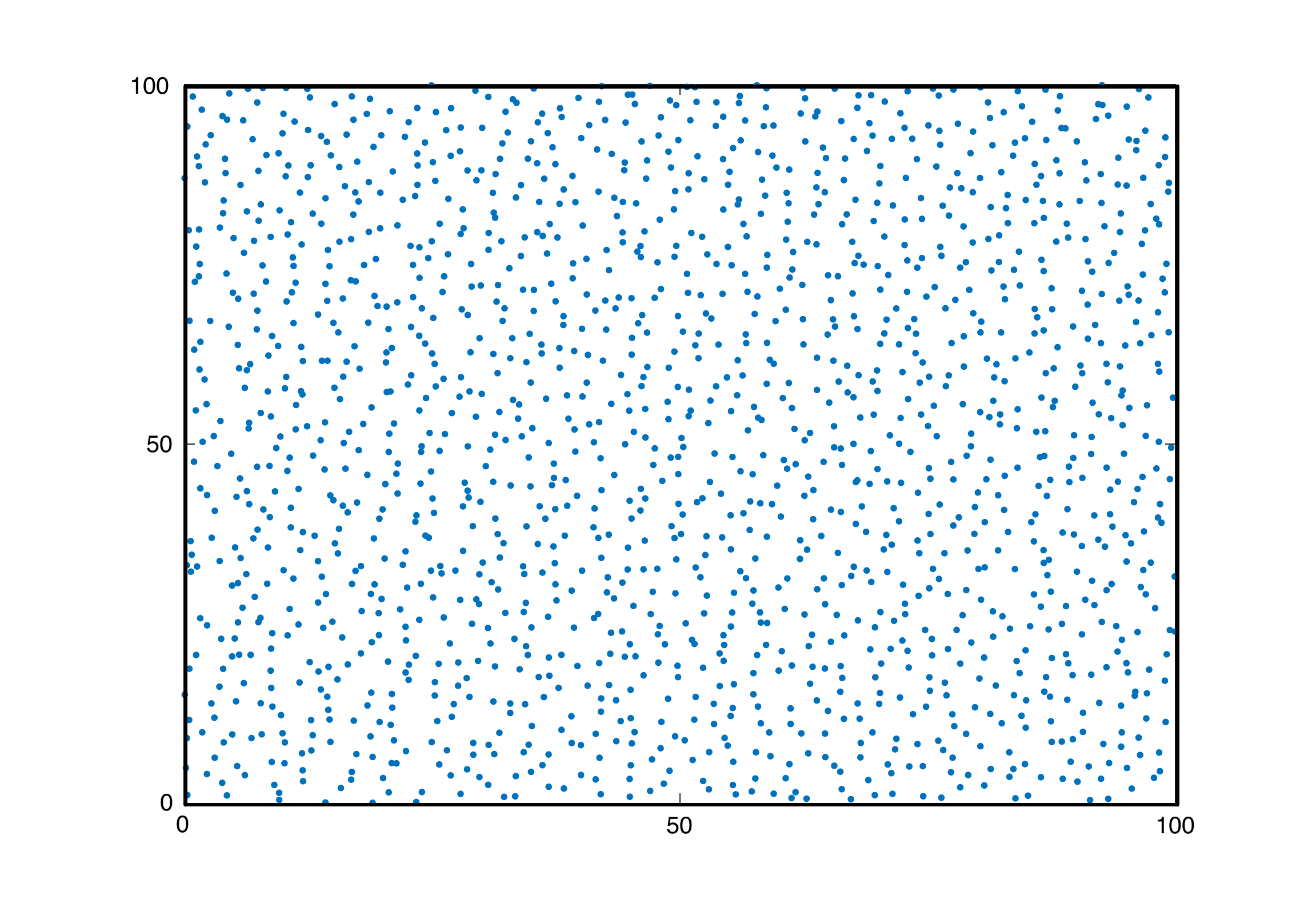}  
  \includegraphics[width=0.32\hsize]{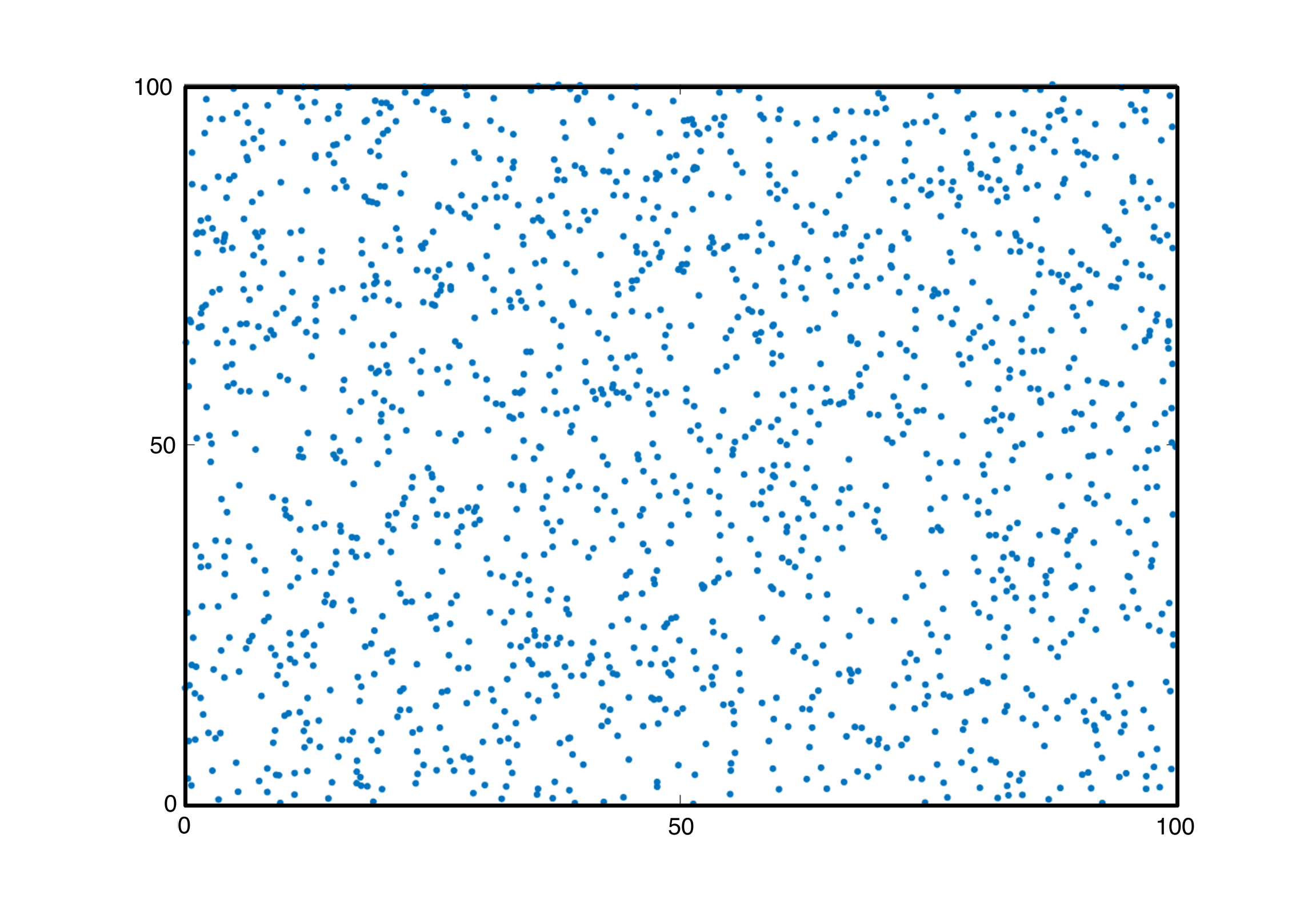}    
  \includegraphics[width=0.32\hsize]{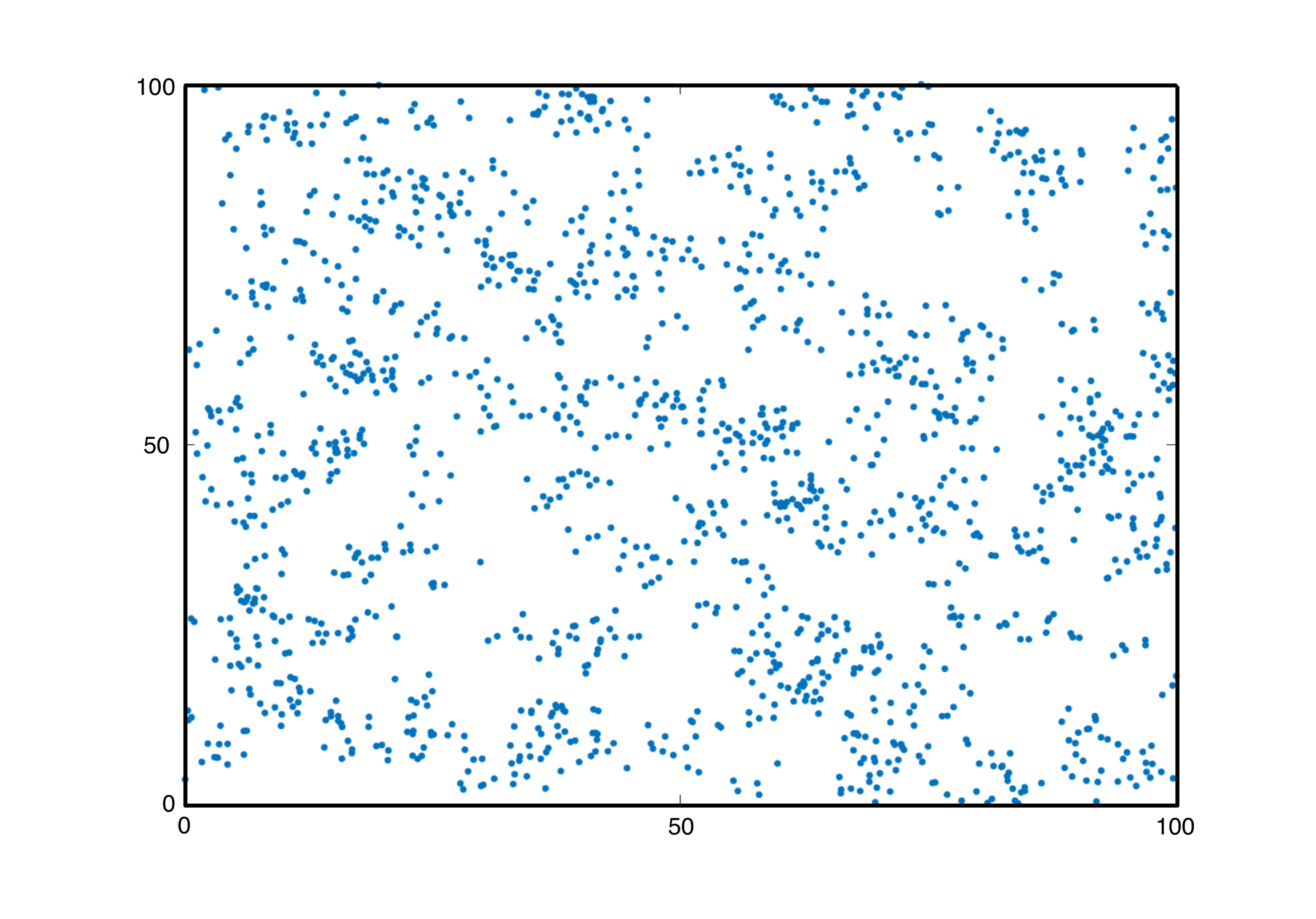}\\
  \includegraphics[height=0.32\hsize,width=0.32\hsize]{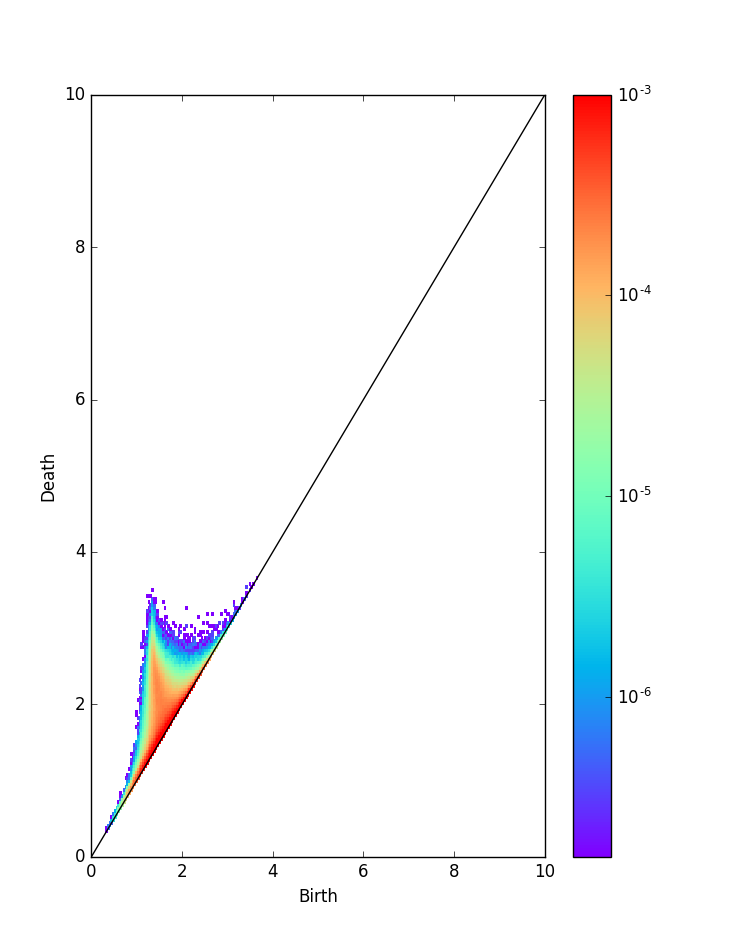}  
  \includegraphics[height=0.32\hsize,width=0.32\hsize]{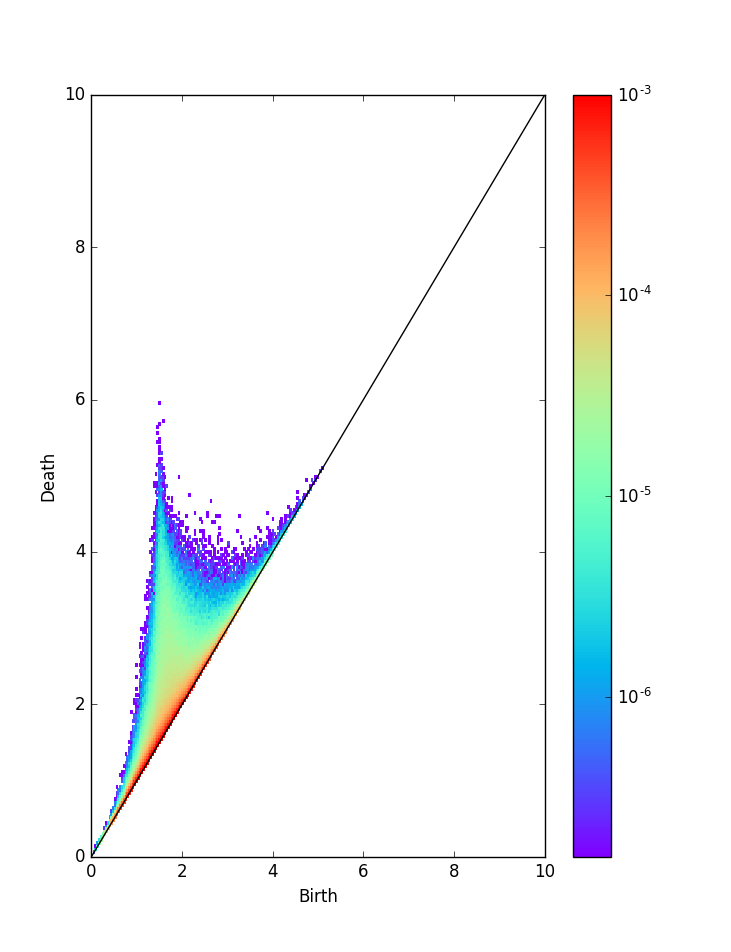}    
  \includegraphics[height=0.32\hsize,width=0.32\hsize]{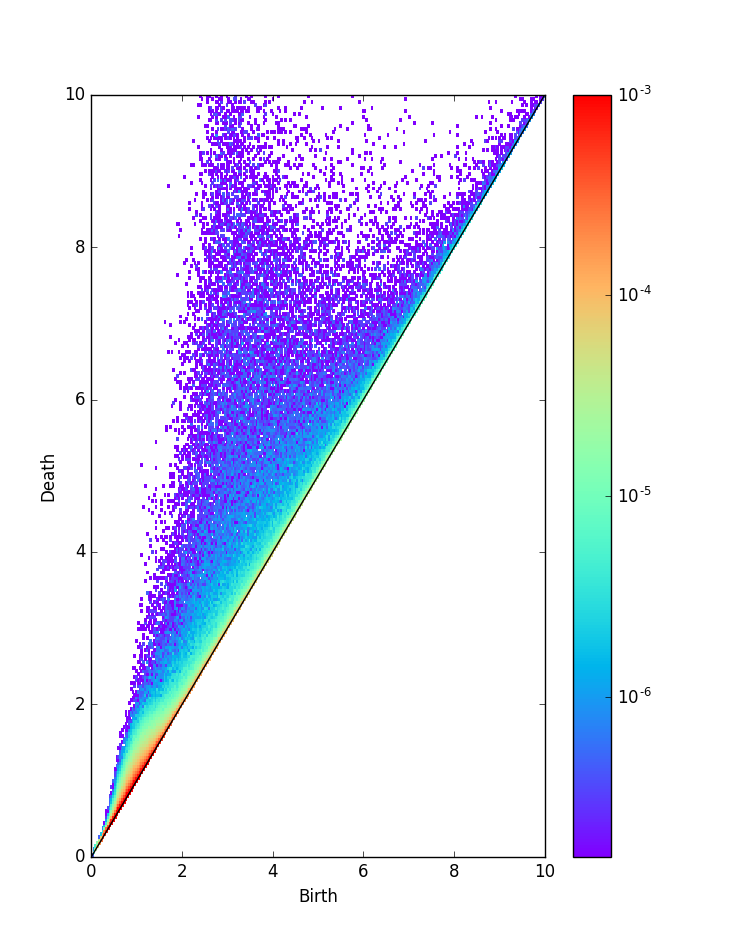}
 \end{center}
  \caption{Top: Point processes with negative (Ginibre), zero (Poisson),
 and positive (Poisson cluster) correlations. In these three point
 processes, the number of points  and the density are set to be
 $1,000,000$ and $1/2\pi$, respectively.  Bottom: The normalized
 persistence diagrams $\xi_{1,L}/L^2$ of the above. } 
  \label{fig:pppd}
\end{figure}
\end{ex}

One of the main results in this paper is as follows. 
\begin{thm}\label{thm:LLN-PD}
Assume that $\Phi$ is a stationary point process on $\bR^{N}$ having all finite moments. Then  
there exists a unique Radon measure $\nu_{q}$ on $\Delta$ such that
\begin{equation}
\frac{1}{L^{N}}\E[\xi_{q,L}] \vto \nu_{q} \text{ as } L \to \infty.
 \label{eq:conv_main1}
\end{equation}
Here $\vto$ denotes the vague convergence of measures on $\Delta$.
In addition, if $\Phi$ is ergodic, then almost surely, 
\begin{equation}
\frac{1}{L^{N}}\xi_{q,L} \vto \nu_{q} \text{ as } L \to \infty.		 
 \label{eq:conv_main2}
\end{equation}
\end{thm}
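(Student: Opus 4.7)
The plan is to reduce the vague convergence of the persistence-diagram measures to a law-of-large-numbers for persistent Betti numbers, and then to establish the latter by extending the argument behind Theorem~\ref{thm:ysa1}.

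First, I would use the standard rectangle identity relating the persistence diagram to persistent Betti numbers: for any rectangle $R = (b_1, b_2] \times (d_1, d_2] \subset \Delta$ with $b_2 < d_1$,
\begin{equation}
\xi_q(\fp)(R) = \beta_q^{b_2,d_1}(\fp) - \beta_q^{b_1,d_1}(\fp) - \beta_q^{b_2,d_2}(\fp) + \beta_q^{b_1,d_2}(\fp),
\end{equation}
where $\beta_q^{s,t}(\fp) := \rank\bigl(H_q(K(\fp,s)) \to H_q(K(\fp,t))\bigr)$. This identity translates measure-theoretic questions about $\xi_{q,L}$ into questions about four persistent Betti numbers.

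Next, I would extend Theorem~\ref{thm:ysa1} from ordinary to persistent Betti numbers: show that for every $0 \le s \le t < \infty$ there is a constant $\hat\beta_q^{s,t} \ge 0$ with
\begin{equation}
\frac{1}{L^N}\,\E\bigl[\beta_q^{s,t}(\Phi_{\Lambda_L})\bigr] \to \hat\beta_q^{s,t}, \qquad \frac{1}{L^N}\,\beta_q^{s,t}(\Phi_{\Lambda_L}) \to \hat\beta_q^{s,t} \text{ a.s.~under ergodicity.}
\end{equation}
The strategy mirrors \cite{ysa}: inserting or deleting a single point (or a single simplex) alters $\beta_q^{s,t}$ by at most a bounded constant, so $\fp \mapsto \beta_q^{s,t}(\fp)$ has the same local-stabilization / bounded-influence structure that drove the averaging/ergodic arguments for the ordinary Betti numbers. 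Translation invariance (K2) and the metric bound (K3) control the geometric range of interaction, so the contribution of simplices meeting $\partial\Lambda_L$ is of order $L^{N-1}$ and vanishes after normalization. The principal technical obstacle is handling the two-parameter family $(s,t)$ uniformly: I would first prove convergence on a fixed countable dense set of pairs, then exploit monotonicity of $\beta_q^{s,t}$ (nonincreasing in $s$, nondecreasing in $t$) to extend the limit to all continuity points, with the continuum-percolation-type bounds from~\cite{ysa} supplying the needed boundedness on compacta.

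Given the persistent Betti LLN, I define a candidate limit measure on half-open rectangles $R = (b_1,b_2] \times (d_1,d_2]$ with $b_2 < d_1$ by
\begin{equation}
\nu_q(R) := \hat\beta_q^{b_2,d_1} - \hat\beta_q^{b_1,d_1} - \hat\beta_q^{b_2,d_2} + \hat\beta_q^{b_1,d_2}.
\end{equation}
Nonnegativity of $\nu_q(R)$ follows from the rectangle identity together with the convergence just established. These rectangles form a $\pi$-system generating the Borel $\sigma$-algebra on $\Delta^\circ = \{(x,y) : 0 \le x < y < \infty\}$, and $\nu_q$ is finitely additive and locally finite on them. The Carath\'eodory extension theorem then produces a unique Radon measure $\nu_q$ on $\Delta$. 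Finally, rectangles with $\nu_q$-null boundary constitute a convergence-determining class for vague convergence of Radon measures on $\Delta$: any $f \in C_c(\Delta)$ is supported away from the diagonal and from $y = \infty$, and can therefore be uniformly approximated by simple functions built from such rectangles (the set of endpoints $s$ at which $\hat\beta_q^{s,\cdot}$ or $\hat\beta_q^{\cdot,s}$ is discontinuous is at most countable, so $\nu_q$-null boundaries are dense). Combining this approximation with the rectangle-wise convergence of $L^{-N}\E[\xi_{q,L}]$ and, under ergodicity, of $L^{-N}\xi_{q,L}$ yields \eqref{eq:conv_main1} and \eqref{eq:conv_main2}. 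The crux of the argument is thus the persistent Betti LLN; once that is granted, the measure-theoretic bookkeeping is routine.
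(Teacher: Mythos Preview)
Your proposal is essentially the paper's own strategy: reduce vague convergence of $L^{-N}\xi_{q,L}$ to a law of large numbers for persistent Betti numbers on a rectangle class, and prove the latter by adapting the boundary-error estimates of \cite{ysa} (the paper does this via Lemma~\ref{lem:multi_add}, yielding Theorem~\ref{thm:LLN-Betti} and Corollary~\ref{cor:conv}).

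The measure-theoretic packaging differs slightly. The paper does \emph{not} build $\nu_q$ by Carath\'eodory extension; instead it sets up a general criterion (Proposition~\ref{prop:LLN}, proved in Appendix~\ref{app:convergence}) showing that convergence of $\E[\xi_n(A)]$ on a convergence-determining class $\mfA$ with the finite covering property already forces $\{\E[\xi_n]\}$ to be relatively compact in the vague topology, and the subsequential limit is then unique. This sidesteps verifying $\sigma$-additivity on a semiring. The paper's class $\mfA$ also includes rectangles of the form $[0,r]\times(s_1,s_2]$, which you omit; your half-open rectangles $(b_1,b_2]\times(d_1,d_2]$ do not see mass on $\{0\}\times(0,\infty]$, so for $q=0$ your $\pi$-system is not convergence-determining.

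Two further remarks. First, the ``uniform handling of the two-parameter family'' you flag as the principal obstacle is unnecessary: the paper proves Theorem~\ref{thm:LLN-Betti} directly for each fixed pair $(r,s)$, and pointwise convergence on rectangles is all Proposition~\ref{prop:LLN} requires. Second, your monotonicity claim is reversed: $\beta_q^{s,t}=\xi_q([0,s]\times(t,\infty])$ is nondecreasing in $s$ and nonincreasing in $t$, not the other way around.
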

We call the limiting Radon measure $\nu_q$ 
the $q$th persistence diagram of a stationary ergodic point process
$\Phi$. In non-ergodic case, by using the ergodic decomposition (cf. \cite{gs}), the right-hand side in \eqref{eq:conv_main2} is replaced by the random measure 
$\nu_{q, \omega}$ which is measurable with respect to the translation invariant $\sigma$-field 
$\cI$ defined in Section~\ref{sec:rm}.

\begin{rem}
 The set $\Delta$ is topologically the same as the triangle 
 \[
 	\{(x,y) \in \bR^2 : 0 \le x < y \le 1\}
\] 
with open boundary
 $\partial \Delta = \{(x,x) \in \bR^2 : 0 \le x \le 1\}$.
 Although we do not consider the mass on $\partial \Delta$, intuitively
 speaking, the (virtual) mass on $\partial \Delta$ comes from
 configurations of special forms such as three vertices of a right triangle. 
 With the vague convergence, we do not see the mass escaping towards the
 boundary $\partial \Delta$ in the limit $L \to \infty$. 
 In applications, the mass appearing
 near the boundary is often considered to be fragile under perturbation.  
\end{rem}

The limiting measure $\nu_q$ may be trivial. Indeed, for
 \cech complexes, $\nu_q = 0$ for $q \ge N$. This is just because
 there is no configuration in $\bR^{N}$ that realizes $q$th homology
 class for $q \ge N$. In order to characterize the support of $\nu_q$,
we introduce the notion of realizability of a point in a persistence diagram. 

\begin{defn}
We say that a point $(b,d) \in \Delta$ is \textit{realizable by} $\Xi \in
  \finite(\bR^{N})$ in the $q$th persistent homology
  if $(b,d)$ is contained in the $q$th persistence diagram of the
$\kappa$-filtration over $\fp$, i.e., $\xi_q(\fp)(\{(b,d)\}) \ge 1$. 
If such $\fp$ exists for $(b,d)$, we call $(b,d)$ a \textit{realizable} point.
  We denote by $R_q = R_q(\kappa)$
  the set of all realizable points in the $q$th persistent
 homology of the $\kappa$-filtration. 
\end{defn}


  \begin{ex}\label{rem:realizable}
   For $\alpha > 0$ and $\sigma \in \finite(\bR^{N})$,
   we define $\alpha \sigma \in \finite(\bR^{N})$ by
   $\alpha \sigma = \{\alpha x \in \bR^{N} : x \in \sigma\}$. 
It is easy to see that if $\kappa$ is homogeneous in the sense that
  $\kappa(\alpha \sigma) = \alpha \kappa(\sigma)$ for every
  $\alpha > 0$ and $\sigma \in \finite(\bR^{N})$,
  then $R_q(\kappa)$ forms a cone in $\Delta$.
Since both $\kappa_C$ and $\kappa_R$ given in Example~\ref{ex:kappas}
  are homogeneous, we can see that $R_q(\kappa_C)$ and $R_q(\kappa_R)$ are cones for every $q \ge 0$. 
In particular, for \cech complexes, we have 
 \begin{align}\label{eq:realizable_cech}
 R_q(\kappa_C) =
  \begin{cases}
  \{0\} \times (0, \infty], & \text{if $q=0$}, \\
   \{(b,d): 0<b < d <\infty\}, & \text{if $q=1,2,\dots,N-1$}, \\
   \emptyset, & \text{if $q=N, N+1,\dots$}. 
  \end{cases}
 \end{align}
 \end{ex}

 It is clear that $\support \nu_q \subset \overline{R_q(\kappa)}$.
 Indeed, if $x \not\in \overline{R_q(\kappa)}$, there exists $\eps>0$ such
 that $\xi_{q,L}(B_{\eps}(x)) = 0$, where $B_{\eps}(x)$ is the open
 $\eps$-neighborhood of $x$. It follows from the vague convergence
 \eqref{eq:conv_main1} that $\nu_q(B_{\eps}(x)) = 0$. Therefore $x
 \not\in \support \nu_q$.
 In Theorem~\ref{thm:support}, we give sufficient conditions for a point
in $R_q(\kappa)$ to be in the support of $\nu_q$.
The following result, as a consequence of that general theorem, states that $\support \nu_q$ coincides
with $\overline{R_q(\kappa)}$ under conditions that $\kappa$ is Lipschitz
 continuous and all local densities of the point process $\Phi$ are almost surely
 positive with respect to the Lebesgue measures. 

\begin{thm}\label{thm:positivity0}
 Let $\Phi$ be a stationary point process on $\bR^{N}$ and
$\Theta$ its probability distribution. 
Assume that for every compact set $\La \subset \bR^{N}$, 
 the restriction $\Theta|_{\La}$ on $\La$
 is absolutely continuous with respect to
$\Pi|_{\La}$ and the Radon-Nikodym density $d\Theta|_{\La}/d\Pi|_{\La}$ is
 strictly positive $\Pi|_{\La}$-almost surely,
 where $\Pi$ is the distribution of a homogeneous Poisson point process on $\bR^{N}$.
 In addition, assume that $\kappa$ on $\finite(\bR^{N})$ is Lipschitz 
 continuous with respect to the Hausdorff distance.
 Then, $\support \nu_q = \overline{R_q(\kappa)}$.
\end{thm}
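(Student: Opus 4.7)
The plan is to prove the reverse inclusion $\overline{R_q(\kappa)} \subset \support \nu_q$; since $\support \nu_q$ is closed this reduces to $R_q(\kappa) \subset \support \nu_q$. Fix $(b,d) \in R_q(\kappa)$ and an open neighborhood $U$ of $(b,d)$ in $\Delta$, and choose a compact set $K$ with $(b,d) \in \interior K \subset K \subset U$. By the vague convergence $L^{-N}\E[\xi_{q,L}] \vto \nu_q$ from Theorem~\ref{thm:LLN-PD} together with the standard upper-semicontinuity inequality $\nu_q(K) \ge \limsup_L L^{-N}\E[\xi_{q,L}(K)]$ valid on compacta, it is enough to exhibit a uniform lower bound
\[
L^{-N}\,\E[\xi_{q,L}(K)] \,\ge\, c > 0 \qquad \text{for all sufficiently large } L.
\]

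Let $\sigma = \{x_1,\dots,x_m\}$ realize $(b,d)$; by the translation invariance (K2) we may assume $\sigma \subset B_{r_0}(0)$. The first tool is a local stability estimate. Because $\kappa$ is Lipschitz on $\finite(\bR^N)$ with respect to the Hausdorff distance (with constant $L_\kappa$), any configuration $\sigma' = \{y_1,\dots,y_m\}$ with $\max_i \|y_i - x_i\| \le \delta$ satisfies $|\kappa(\tau) - \kappa(\tau')| \le L_\kappa \delta$ for every pair of matched sub-simplices $\tau \subset \sigma$, $\tau' \subset \sigma'$. The algebraic stability theorem for persistence modules then yields $d_B(D_q(\sigma), D_q(\sigma')) \le L_\kappa \delta$, so after fixing $\delta > 0$ sufficiently small every such $D_q(\sigma')$ is forced to contain a point in $\interior K$.

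The second tool is an isolation lemma based on (K3). Choose a threshold $T$ so that $K \cap \{y \le T\}$ still contains an open neighborhood of $(b,d)$ (for $d < \infty$ any $T > d$ works; for $d = \infty$ take $T$ large in the compactification topology of $\Delta$ near the infinite-death edge), and set $R_0 = \rho(T) + r_0 + \delta + 1$. If $\tau \supset \sigma'$ is finite with $\dist(\sigma', \tau\setminus\sigma') > \rho(T)$, then (K1)--(K3) force every simplex $\eta$ of $\tau$ having vertices on both sides to satisfy $\kappa(\eta) \ge \kappa(\{x,y\}) > T$ (using that $\rho$ monotone and $\|x-y\| > \rho(T)$ imply $\kappa(\{x,y\}) > T$), so $K(\tau, t)$ and $K(\sigma', t) \sqcup K(\tau\setminus\sigma', t)$ coincide for $t \le T$; the persistence module decomposes as a direct sum up to time $T$, and the persistence point of $\K(\sigma')$ in $\interior K$ survives in $D_q(\tau)$ as a point of $K$ (its death is capped at $T$ if necessary, but still inside $K$ by the choice of $T$). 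Now tile $\La_{L - 2R_0}$ by disjoint cubes $Q_i$ of side $2R_0$ centered at $c_i$, so that there are $\sim L^N/(2R_0)^N$ such cubes, and consider
\[
A_i \,=\, \bigl\{ \Phi \cap Q_i = \{c_i + x_1 + e_{i,1}, \dots, c_i + x_m + e_{i,m}\} \text{ for some } \|e_{i,j}\| \le \delta\bigr\}.
\]
On $A_i$ the two tools above guarantee a persistence point of $\K(\Phi_{\La_L})$ lying in $K$ and attributable to $Q_i$; distinct satisfied events produce distinct persistence points via the direct-sum decomposition, so $\xi_{q,L}(K) \ge \sum_i \one_{A_i}$. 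Under the homogeneous Poisson measure $\Pi$ a routine computation gives $\Pi(A_i) = p_0 > 0$ depending only on $\sigma, \delta, R_0$; the hypothesis that $\Theta|_{Q_i} \ll \Pi|_{Q_i}$ with strictly positive Radon--Nikodym density then gives $\Theta(A_i) > 0$, and stationarity of $\Phi$ makes this a common value $p_1 > 0$. Hence $\E[\xi_{q,L}(K)] \ge \sum_i \Theta(A_i) \gtrsim p_1 L^N/(2R_0)^N$, which is the desired uniform lower bound.

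The principal obstacle is the isolation step: ensuring that a persistence class realized inside a single local cube actually appears as a point of the persistence diagram of the full configuration $\Phi_{\La_L}$. The key structural input is the consequence of (K3) that widely separated points cannot co-participate in any simplex of small birth time, which yields the direct-sum decomposition of the filtration up to parameter $T$ and is what makes the argument work for a general $\kappa$; handling $d = \infty$ requires only an additional remark about choosing $T$ large in the topology of $\Delta$ near its boundary edge at infinity.
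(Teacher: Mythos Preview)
Your argument is correct and follows essentially the same line as the paper's: both proofs combine Lipschitz stability of persistence diagrams, an isolation step based on (K3), a tiling of $\Lambda_L$ by boxes, and the positivity of the local density against $\Pi|_\Lambda$. The only organizational difference is that the paper first isolates the combinatorial content into the notion of a \emph{marker} (Definition~4.4) and a general support criterion $S_q\subset\support\nu_q$ (Theorem~\ref{thm:support}, together with the lower bound $\nu_q(A)\ge M^{-N}\PP(\text{$\Phi_{\Lambda_M}$ is an $A$-marker})$), and then verifies that under the Lipschitz and density hypotheses any realizable point lies in $S_q$; you instead run the same estimate in one pass by bounding $L^{-N}\E[\xi_{q,L}(K)]$ from below directly. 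Your direct-sum decomposition up to time $T$ is exactly what underlies the paper's assertion (Example~\ref{ex:marker}(i)) that a realizing configuration becomes a marker once it is sufficiently isolated, and your counting $\xi_{q,L}(K)\ge\sum_i\one_{A_i}$ is the content of inequality~\eqref{eq:lowerbound}. One small wording issue: for $d=\infty$ the death of the surviving bar in $D_q(\tau)$ is not ``capped at $T$'' but rather \emph{at least} $T$; this is harmless since your compact $K$ already contains a set of the form $\{|x-b|<\eps\}\times(T,\infty]$, so the conclusion stands. In this respect your neighborhood-based formulation is in fact a bit cleaner than the paper's exact-point marker condition, which is delicate to verify at $(b,\infty)$.
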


\begin{ex}
 Homogeneous Poisson point processes, Gibbs measures 
 and Ginibre point processes satisfy the assumption in Theorem~\ref{thm:positivity0}.
Thus if $\kappa$ is Lipschitz continuous with respect to the Hausdorff
 distance, then $\support \nu_q = \overline{R_q(\kappa)}$ for such point
 processes. In particular, for \cech filtrations,
 $\support \nu_q = {\Delta}, (q=1,\dots, N-1)$. 
 On the other hand, the shifted lattice considered in
 Example~\ref{ex:shifted_lattice} does not satisfy the assumption and 
 $\support \nu_q$ for the \cech filtration turns out to be a singleton. 
\end{ex}

For the proof of Theorem~\ref{thm:LLN-PD},
we exploit a general theory of Radon
measures for the vague convergence (cf.~\cite{billingsley,
Kallenberg}). 
In particular, we show that the convergence of the values of measures on
the class $\{A_{r, s} = [0,r] \times (s, \infty] : 0 \leq r \le s < 
\infty\}$ is enough to ensure the vague convergence of random measures in Theorem
\ref{thm:LLN-PD}. The value of $\xi_{q,L}$ on $A_{r,s}$ is nothing but
the persistent Betti number  
\[
\beta_q^{r,s}(\K(\Phi_{\Lambda_L})) = \xi_{q,L} ([0,r] \times (s,
\infty]) = |\{(b_i, d_i) : 0 \le b_i \le r \le s< d_i \}|. 
\]
Here $|A|$ denotes the cardinality of a finite set $A$. Later, $|A|$ is also used to denote the Lebesgue measure of a set $A \in \bR^{N}$. The meaning is clear from the context.  
Hence Theorem \ref{thm:LLN-PD} follows from the following strong law
of large numbers for persistent Betti numbers. 
\begin{thm}\label{thm:LLN-Betti}
Assume that $\Phi$ is a stationary point process having all finite moments. Then  for any $0 \le r \le s < \infty$, there exists a constant $\hat \beta^{r,s}_q$ such that 
\[
	\frac{\E[\beta_q^{r,s} (\K(\Phi_{\Lambda_L}))] }{L^{N}} \to \hat \beta^{r,s}_q \text{ as } L \to \infty.
\] 
In addition, if $\Phi$ is ergodic, then 
\[
\frac{\beta_q^{r,s} (\K(\Phi_{\Lambda_L})) }{L^{N}} \to \hat \beta^{r,s}_q \text{ almost surely as } L \to \infty.
\] 
\end{thm}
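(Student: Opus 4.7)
The plan is a ``bounded-range plus additivity up to boundary'' reduction, combined with the multiparameter ergodic theorem. Two ingredients drive the argument: an algebraic stability bound saying that $\beta_q^{r,s}$ is Lipschitz-$1$ under insertion or deletion of a single simplex, and the spatial locality provided by (K3), which forces every simplex of filtration value at most $s$ to have diameter at most $\rho(s)$.

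For the stability bound, recall that $\beta_q^{r,s}(\K)$ is the rank of the induced map $H_q(K(\cdot,r))\to H_q(K(\cdot,s))$. A rank--nullity argument on the relevant chain complexes (equivalently, the persistence-interval pairing) shows that adjoining or removing one simplex of dimension at most $q+1$ from $\K$ changes $\beta_q^{r,s}$ by at most one; iterating, $|\beta_q^{r,s}(\K)-\beta_q^{r,s}(\K')|\le m$ whenever the two filtrations differ by $m$ such simplices. For the locality step, fix $\ell$ with $\ell\mid L$ and partition $\La_L$ into sub-boxes $\{B_i\}_{i=1}^{(L/\ell)^N}$ of side $\ell$. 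By (K3), any simplex in $K(\Phi_{\La_L},s)$ that does not already belong to some $K(\Phi_{B_i},s)$ contains two vertices in distinct sub-boxes within distance $\rho(s)$ of each other; let $M_{L,\ell}$ denote the number of such ``cross-box'' simplices of dimension at most $q+1$. The stability bound then gives
\[
	\Bigl|\beta_q^{r,s}(\K(\Phi_{\La_L})) - \sum_{i=1}^{(L/\ell)^N} \beta_q^{r,s}(\K(\Phi_{B_i}))\Bigr| \le M_{L,\ell}.
\]
A factorial-moment expansion of $M_{L,\ell}$, using translation invariance and the moment hypothesis \eqref{eq:moment_condition}, restricts integration to a $\rho(s)$-neighborhood of $\bigcup_i\partial B_i$ (of Lebesgue measure $O(L^N\rho(s)/\ell)$), yielding $\E[M_{L,\ell}]\le C_{s,q}\,L^N/\ell$ and, analogously, $\Var(M_{L,\ell})=O(L^N/\ell)$.

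Setting $a(\ell):=\E[\beta_q^{r,s}(\K(\Phi_{\La_\ell}))]/\ell^N$ and taking expectations in the displayed inequality gives $|a(L)-a(\ell)|\le C_{s,q}/\ell$, so $\{a(\ell)\}$ is Cauchy with some limit $\hat\beta_q^{r,s}$; this already proves the mean convergence. For the almost sure statement, the variables $\xi_i:=\beta_q^{r,s}(\K(\Phi_{B_i}))$ form a stationary (and, when $\Phi$ is ergodic, ergodic) field under translations by $\ell\bZ^N$, so the multidimensional pointwise ergodic theorem gives $(L/\ell)^{-N}\sum_i\xi_i\to \ell^N a(\ell)$ a.s.\ as $L\to\infty$. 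Combining with a Chebyshev/Borel--Cantelli argument along a polynomial subsequence of $L$, in which the variance bound forces $L^{-N}M_{L,\ell}\to 0$ a.s.\ for each fixed $\ell$, yields $\limsup_{L\to\infty}|L^{-N}\beta_q^{r,s}(\K(\Phi_{\La_L}))-a(\ell)|=0$ a.s.\ for every $\ell$; letting $\ell\to\infty$ completes the proof.

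The principal technical difficulty lies in the boundary simplex count: both $\E[M_{L,\ell}]$ and $\Var(M_{L,\ell})$ must be shown to scale as $O(L^N/\ell)$ uniformly in $L$. This is where the all-finite-moments hypothesis \eqref{eq:moment_condition} is essential, since $M_{L,\ell}$ is a sum over $(q+2)$-tuples of points of $\Phi$, so bounding its variance requires factorial moment measures of order up to $2(q+2)$. The geometric restriction to a thickened boundary, made possible by (K3), is what renders these bounds summable and yields the quantitative $O(L^N/\ell)$ decay needed to interchange the limits $L\to\infty$ and $\ell\to\infty$.
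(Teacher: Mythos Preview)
Your strategy is essentially the paper's: decompose $\La_L$ into sub-boxes, use a simplex-by-simplex stability bound on $\beta_q^{r,s}$ (the paper's Lemma~\ref{lem:multi_add}), invoke (K3) to confine the discrepancy to simplices near the interfaces, and then apply the multiparameter ergodic theorem to the sub-box contributions. The mean-convergence half is fine.

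There is, however, a genuine error in the almost-sure half. You claim that the variance bound forces $L^{-N}M_{L,\ell}\to 0$ a.s.\ for each fixed $\ell$, and hence that $\limsup_{L\to\infty}|L^{-N}\beta_q^{r,s}(\K(\Phi_{\La_L}))-a(\ell)|=0$. This is false: your own estimate $\E[M_{L,\ell}]\le C_{s,q}L^{N}/\ell$ shows only that $\E[L^{-N}M_{L,\ell}]=O(1/\ell)$, not $o(1)$ as $L\to\infty$. The variance bound $\Var(L^{-N}M_{L,\ell})=O(L^{-N}/\ell)$ makes $L^{-N}M_{L,\ell}$ concentrate around its \emph{mean}, which is of order $1/\ell$ and does not vanish. (If your conclusion held, $a(\ell)$ would be constant in $\ell$, contradicting your own Cauchy estimate $|a(L)-a(\ell)|\le C/\ell$.) The correct statement is
\[
\limsup_{L\to\infty}\bigl|L^{-N}\beta_q^{r,s}(\K(\Phi_{\La_L}))-a(\ell)\bigr|\le \frac{C}{\ell}\quad\text{a.s.},
\]
after which letting $\ell\to\infty$ really does finish the proof. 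The paper obtains this bound not via Chebyshev/Borel--Cantelli but by applying the ergodic theorem a second time, directly to the boundary count $\sum_i F_j(\Phi_{(\partial\La_M)^{(\rho(s))}+c_i},s)$, which yields the a.s.\ limit $\E[F_j(\Phi_{(\partial\La_M)^{(\rho(s))}},s)]=O(M^{N-1})$; this route needs only first moments. Your variance route is a legitimate alternative once the conclusion is stated correctly, but it does require moment measures of order $2(q+2)$ as you note. A minor additional gap: you assume $\ell\mid L$ throughout, so you should also record the easy interpolation (as the paper does) showing $|a(L)-a(mM)|=O(M/L)$ to pass to general $L$.
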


Note that, for $r = s$, the persistent Betti number becomes the usual
Betti number, i.e., $\beta_q^{r,r} (\K(\Phi_{\Lambda_L})) =
\beta_q(K(\Phi_{\La_L}, r))$. Hence, this result is a generalization of
Lemma 3.3 and Theorem 3.5 in \cite{ysa}. 

For Poisson point processes, we also generalize the central limit theorem in \cite{ysa} for Betti numbers to persistent Betti numbers as follows.

\begin{thm}\label{thm:CLT}
	Let $\Phi$ be a homogeneous Poisson point process on
 $\bR^{N}$ with
 unit intensity. Then for any $0 \le r \le s < \infty$, there exists a
 constant $\sigma_{r,s}^2 = \sigma_{r,s}^2 (q)$ such that  
	\[
		\frac{\beta_q^{r,s} (\K(\Phi_{\Lambda_L})) -
 \E[\beta_q^{r,s} (\K(\Phi_{\Lambda_L})) ]}{L^{N/2}} \dto \cN(0,
 \sigma_{r,s}^2)\text{ as } L \to \infty. 
	\]
\end{thm}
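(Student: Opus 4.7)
The plan is to adapt the strategy used for the CLT of ordinary Betti numbers in \cite{ysa} (Theorem~\ref{thm:ysa2}) to the persistent setting. Set $F_L := \beta_q^{r,s}(\K(\Phi_{\La_L}))$. Writing $D_x F_L := F_L(\Phi \cup \{x\}) - F_L(\Phi)$ for the add-one cost and $D^2_{x,y}F_L$ for the iterated difference, I would prove a quantitative normal approximation using the second-order Poincar\'e inequality for Poisson functionals of Last, Peccati and Schulte. This requires three ingredients: (i) a pointwise bound on $D_x F_L$ with sufficiently high moments; (ii) a locality property for $D^2_{x,y}F_L$; and (iii) existence of the limit $\sigma_{r,s}^2 = \lim L^{-N}\Var(F_L)$.

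The key input is a \emph{local stability} estimate. A simplex $\sigma$ containing $x$ with $\kappa(\sigma)\le s$ must satisfy $\kappa(\{x,y\})\le s$ for every $y\in\sigma$ by (K1), and then $\|x-y\|\le \rho(s)$ by (K3). Hence $D_xF_L$ depends only on $\Phi\cap B_{\rho(s)}(x)\cap\La_L$, and an application of the Cohen--Steiner--Edelsbrunner--Harer stability theorem to the pair of filtrations $\K(\Phi_{\La_L})$ and $\K((\Phi\cup\{x\})_{\La_L})$, which differ only by simplices inside $B_{\rho(s)}(x)$, yields a bound of the form
\[
|D_x F_L| \le C\bigl(1+\Phi(B_{\rho(s)}(x))\bigr)^{q+2}.
\]
A similar argument shows that $D^2_{x,y}F_L=0$ whenever $\|x-y\|>2\rho(s)$, and otherwise
\[
|D^2_{x,y}F_L| \le C\bigl(1+\Phi(B_{2\rho(s)}(x))\bigr)^{q+3}.
\]
Since $\Phi$ is a unit-intensity Poisson process, $\Phi(B_{\rho(s)}(x))$ is Poisson and hence has finite moments of every order, uniformly in $x\in\bR^{N}$.

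Plugging these estimates into the Last--Peccati--Schulte inequality, the Wasserstein distance between $(F_L-\E[F_L])/\sqrt{\Var(F_L)}$ and the standard normal is bounded by integrals of $\E[|D_xF_L|^p]$ and $\E[|D^2_{x,y}F_L|^p]$ over $\La_L$. The finite-range property of $D^2_{x,y}F_L$ together with the uniform moment bounds makes these integrals of order $L^N$, so after the normalization of the Poincar\'e inequality they are $O(L^{-N/2})$. The same add-one-cost estimate, combined with the standard Poincar\'e inequality for Poisson functionals, gives $\Var(F_L)=O(L^N)$. Existence of $\sigma_{r,s}^2=\lim L^{-N}\Var(F_L)$ follows from stationarity and the fact that $D_xF_L$ and $D_yF_L$ are independent whenever $\|x-y\|>2\rho(s)$ (by the independence property of the Poisson process and the locality derived above), exactly as in the treatment of ordinary Betti numbers in \cite{ysa}. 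Combining the three ingredients yields $(F_L-\E[F_L])/L^{N/2}\dto \cN(0,\sigma_{r,s}^2)$.

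The main obstacle is the quantitative add-one-cost bound. For the ordinary Betti number $\beta_q$ one can argue directly by counting the $q$- and $(q+1)$-simplices created when $x$ is inserted, but the persistent Betti number $\beta_q^{r,s}$ tracks the interleaving of an entire filtration over $[0,s]$, so the estimate has to go through persistence stability rather than a local simplex count. Making this quantitative, with a constant depending only on $\Phi(B_{\rho(s)}(x))$ and not on $\Phi$ outside this ball, is the technical crux; once it is in place, the rest is a relatively routine application of the Malliavin--Stein machinery for Poisson functionals.
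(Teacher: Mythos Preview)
Your proposal contains a genuine gap: the locality claims are false. You assert that $D_xF_L$ depends only on $\Phi\cap B_{\rho(s)}(x)\cap\La_L$, that $D^2_{x,y}F_L=0$ whenever $\|x-y\|>2\rho(s)$, and that $D_xF_L$ and $D_yF_L$ are independent for such $x,y$. None of these hold. What is true is that the \emph{simplices} created by inserting $x$ all lie in $B_{\rho(s)}(x)$; but the \emph{effect on the Betti number} is global. For a concrete counterexample in $\bR^2$ with the \v{C}ech filtration, take points evenly spaced on a large circle of radius $R$ with two small gaps at antipodal positions; put $x$ in one gap and $y$ in the other. Then $\beta_1^{r,r}$ equals $0$ for $\Phi$, $\Phi\cup\{x\}$ and $\Phi\cup\{y\}$, but equals $1$ for $\Phi\cup\{x,y\}$, so $D^2_{x,y}\beta_1^{r,r}=1$ while $\|x-y\|\approx 2R$ is arbitrary. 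The same example shows $D_xF_L$ depends on $\Phi$ far from $x$. Consequently the finite-range input that drives your second-order Poincar\'e estimate and your variance argument collapses; the Last--Peccati--Schulte bound would still require control of $\PP(D^2_{x,y}F_L\neq 0)$ for distant $x,y$, which you have not supplied.

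A secondary issue: the Cohen--Steiner--Edelsbrunner--Harer stability theorem bounds the bottleneck distance between diagrams, not the change in the count over a fixed rectangle $[0,r]\times(s,\infty]$; points near the boundary of that rectangle can move in or out regardless of how small the bottleneck distance is. The polynomial bound you write down \emph{is} correct, but it comes from direct simplex counting (each new $q$- or $(q{+}1)$-simplex changes $\beta_q^{r,s}$ by at most one; this is Lemma~\ref{lem:multi_add} in the paper), not from diagram stability.

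The paper avoids the finite-range obstruction altogether by appealing to the Penrose--Yukich CLT (Lemma~\ref{lem:py}), which requires only \emph{weak stabilization}: almost-sure convergence of $D_0\beta_q^{r,s}(\K(\cP_{A_n}))$ as $A_n\nearrow\bR^{N}$, with no rate and no locality. This is established by an elementary linear-algebra argument (Lemma~\ref{lem:stabilization} and Proposition~\ref{prop:weakstab}): one writes the add-one cost as a difference of two rank increments,
\[
\Big(\rank Z_q(K'_{r,a})-\rank Z_q(K_{r,a})\Big)-\Big(\rank\big(Z_q(K'_{r,a})\cap B_q(K'_{s,a})\big)-\rank\big(Z_q(K_{r,a})\cap B_q(K_{s,a})\big)\Big),
\]
and shows each increment is integer-valued, bounded (by the number of simplices through the origin, which is eventually constant in $a$), and nondecreasing in $a$ via injectivity of the natural quotient maps. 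Hence both stabilize. The bounded fourth-moment condition follows from the same simplex-counting bound. This is precisely the step where, as the paper remarks, the Mayer--Vietoris argument used in \cite{ysa} for ordinary Betti numbers does not transfer, and a new argument is needed.
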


We remark that the proof of the central limit theorem for (usual) Betti numbers in \cite{ysa} uses the Mayer-Vietoris
exact sequence to estimate the effect of one point adding on the Betti
number. However, in the setting of persistent homology, although we can
obtain the Mayer-Vietoris exact sequence for each parameter $r$, we do
not have the exactness property with regard to the parameter
change. Hence, the same technique may not be applicable to the case of
persistent Betti numbers. Instead, we give an alternative (and
elementary) proof for the generalization. Remark also that by
establishing the strong stabilization, the central limit theorem for
Betti numbers of \cech complexes built over binomial point processes is
also established in \cite{ysa}. In this case, the positivity of the
limiting variance is also proved under a certain condition on radius parameter
$r$. The positivity problem for the limiting variance is left open in 
case of persistent Betti numbers of general $\kappa$-complexes built
over homogeneous Poisson point processes.

The organization of this paper is given as follows. Necessary
concepts and properties of persistent homology and random measures are
explained in Section \ref{sec:geom_ph} and Section \ref{sec:rm},
respectively.
Theorem~\ref{thm:support} which characterizes the support of limiting
persistence diagrams is stated and proved in
Section~\ref{sec:positivity}.  
The proofs of Theorems \ref{thm:LLN-PD}, \ref{thm:positivity0}, \ref{thm:LLN-Betti} and
\ref{thm:CLT} are given in Sections \ref{sec:LLN-PD},
\ref{sec:positivity}, \ref{sec:LLN-Betti}  and \ref{sec:CLT-pbetti} in order. In Section
\ref{sec:conclusion}, we summarize the conclusions of the paper and show
some future works.

\section{Geometric models and persistent homology}\label{sec:geom_ph}
In this section, we assume fundamental properties about simplicial
complexes and their homology. 
For details, the reader may refer to Appendix \ref{ap:topology} or \cite{eh,hatcher}. 

\subsection{Geometric models for point processes}\label{sec:gm}
Let $\kappa\colon\finite(\bR^{N})\rightarrow[0,\infty]$ be a function
satisfying the three conditions explained in Section
\ref{sec:intro}, where $\finite(\bR^{N})$ is the collection of all finite
subsets in $\bR^{N}$. For such a function $\kappa$, the $\kappa$-filtration $\K(\Phi)=\{K(\Phi,t)\}_{t\geq 0}$
can be defined in the same way as in \eqref{eq:sc_model} for an infinite
point configuration (or a point process) $\Phi \subset \bR^{N}$ as
well as for a finite point process.

We remark that all vertices (i.e., $0$-simplices) exist at $t=0$.
Also, all simplices in $K(\Phi, t)$ possessing a point
$x$ must lie in the ball $\bar B_{\rho(t)}(x)$
since $\{x, x_1,\dots,x_q\} \in K(\Phi,t)$ with Assumption
(K3) implies that 
$\|x - x_i\| \le \rho(\kappa(x, x_i)) \le \rho(t)$ for all $i$. 
Here $\bar B_r(x)=\{y\in\bR^{N} : \|y-x\|\leq r\}$ is the closure of 
$B_r(x)$ which denotes the open ball of radius $r$ centered at $x$. 
Hence, for each parameter $t$, the presence of simplices containing $x$
is localized in $\bar B_{\rho(t)}(x)$.

This geometric model includes some of the standard models studied in
random topology. For instance, the {\v C}ech complex $C(\Phi,t)$ is a
simplicial complex with the vertex set $\Phi$ and, for each parameter
$t$, it is defined by 
\[
	\sigma=\{x_0,\dots,x_q\}\in C(\Phi,t) \Longleftrightarrow \bigcap_{i=0}^q \bar B_t(x_i)\neq \emptyset
\]
for $q$-simplices. 
Similarly, the Rips complex $R(\Phi,t)$ with a parameter $t$ is defined
by
\[
	\sigma=\{x_0,\dots,x_q\}\in R(\Phi,t) \Longleftrightarrow \bar B_t(x_i)\cap \bar B_t(x_j)\neq \emptyset \text{ for } 0\leq  i< j\leq q.
\]
It is clear that these geometric models are generated by the functions
given in Example~\ref{ex:kappas}. 
We note that $R(\Phi,t/2) \subset C(\Phi, t) \subset R(\Phi, t)$
since $\kappa_R \le \kappa_C \le 2\kappa_R$. 


In Example \ref{rem:realizable}, we showed the set $R_q(\kappa_C)$
of the realizable points in (\ref{eq:realizable_cech}) for the \cech
filtration. We are going to give a proof of that fact here. The cases $q=0$ and $q\geq N$ are easily derived. For $q=1,\dots, N-1$, we will show that any birth-death pair $(b,d)$ with $0<b<d<\infty$ is realizable by explicitly
constructing the points $\Xi\in\finite(\bR^{N})$ realizing $(b,d)$. Indeed, let $S^q_{d}\subset \bR^{N}$  be a $q$-dimensional sphere with radius $d$ and $z_0$ be any point in $S^q_d$. We choose points $\Xi_+$ on $S_d^q \cap  \partial \bar B_b(z_0)$ and $\Xi_-$ on $S_d^q \setminus \bar B_b(z_0)$ such that 
\begin{itemize}
	\item[(i)] $S_d^q \cap \bar B_b(z_0) \subset \bigcup_{x \in \Xi_+} \bar B_b(x) $;
	\item[(ii)] $\bigcup_{x \in \Xi_-} \bar B_r(x)$ covers $S_d^q \setminus \bar B_b(z_0)$ earlier than $r=b$;
	\item[(iii)] $\bigcup_{x\in\Xi} \bar B_r(x)$
provides the generator of $q$-dimensional homology corresponding to
$S^q_{d}$, where $\Xi =\Xi_+\sqcup\Xi_-$.
\end{itemize}
Then the birth-death pair of the generator $\bigcup_{x\in\Xi} \bar B_r(x)$ is exactly $(b,d)$.


\subsection{Persistent homology}

Let $\K=\{K_r : r\geq 0\}$ be a (right continuous) filtration of
simplicial complexes, i.e., $K_r\subset K_s$ for $r\leq s$ and
$K_r=\bigcap_{r<s}K_s$. In this paper, the homology $H_q(K)$ of a simplicial complex $K$ is defined on an arbitrary field $\bF$.
For $r\leq s$, we denote the linear map on homologies induced
from the inclusion $K_r\hookrightarrow K_s$ by $\iota^s_r \colon H_q(K_r)\rightarrow
H_q(K_s)$. The {\it $q$th persistent homology}
$H_q(\K)=(H_q(K_r),\iota^s_r)$ of $\K$ is defined by the family of homologies $\{H_q(K_r): r\geq 0\}$ and the induced linear maps
$\iota^s_r$ for all $r\leq s$. 

A {\em homological critical value} of $H_q(\K)$ is a number $r> 0$ 
such that the linear map $\iota^{r+\eps}_{r-\eps} \colon
H_q(K_{r-\eps})\rightarrow H_q(K_{r+\eps})$ is not 
isomorphic for any sufficiently small $\eps>0$. The persistent
homology $H_q(\K)$ is said to be {\em tame} if $\dim H_q(K_r)<\infty$
for any $r\geq 0$ and the number of homological critical values is finite.  
A tame persistent homology $H_q(\K)$ has a nice decomposition property:
\begin{thm}[\cite{zc}]\label{thm:decomposition}
Assume that $H_q(\K)$ is a tame persistent homology. Then, there uniquely 
exist 
 indices $p\in\bZ_{\geq 0}$ and
 $b_i,d_i\in \Rbar_{\geq 0}=\bR_{\geq 0}\sqcup\{\infty\}$ with $b_i<d_i, i = 1,2,\dots, p,$ such that the following isomorphism holds: 
\begin{align}\label{eq:decom}
H_q(\K)\simeq\bigoplus_{i=1}^p I(b_i,d_i).
\end{align}
Here, $I(b_i,d_i)=(U_r,f^s_r)$ consists of a family of vector spaces
\begin{align*}
U_r=
\begin{cases}
\bF,&b_i\leq r < d_i,\\
0,&{\rm otherwise},
\end{cases}
\end{align*}
and the identity map $f^s_r=\id_{\bF}$ for $b_i\leq r \leq s<d_i$.
\end{thm}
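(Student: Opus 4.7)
The plan is to reduce the continuous tame filtration $\K$ to a finite discrete persistence module using its critical values, apply the classification of representations of a type-$A$ quiver (equivalently, the structure theorem for finitely generated graded modules over the PID $\bF[x]$), and then lift the resulting decomposition back to the continuous setting.

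First, tameness provides finitely many homological critical values, say $0 < c_1 < \cdots < c_m$, together with $\dim H_q(K_r) < \infty$ for every $r \ge 0$. Setting $c_0 = 0$ and $c_{m+1} = \infty$, every map $\iota^s_r$ is an isomorphism of finite-dimensional vector spaces whenever $r, s$ lie in the same half-open plateau $[c_i, c_{i+1})$. Picking sample points $a_i \in [c_i, c_{i+1})$ for $i = 0, 1, \ldots, m$, the persistent homology $H_q(\K)$ is completely encoded, up to isomorphism, by the finite diagram
\begin{equation*}
H_q(K_{a_0}) \to H_q(K_{a_1}) \to \cdots \to H_q(K_{a_m}),
\end{equation*}
a representation of the linear quiver $A_{m+1}$ with all arrows pointing right.

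By Gabriel's theorem, every such representation decomposes uniquely, up to isomorphism and permutation of summands, into indecomposables, each supported on some set of consecutive indices $\{i, i+1, \ldots, j-1\}$ with $0 \le i < j \le m+1$ and isomorphic to $\bF$ on those indices with identity transition maps. Equivalently, viewing the diagram as a finitely generated graded $\bF[x]$-module (with $x$ acting as the structure maps), the classical structure theorem over the PID $\bF[x]$ yields the same cyclic decomposition. Lifting back: each discrete indecomposable with index support $\{i, \ldots, j-1\}$ corresponds to the continuous interval module $I(c_i, c_j)$, and the definition of homological critical value together with right-continuity $K_r = \bigcap_{r<s} K_s$ ensures that births and deaths occur exactly on $\{c_0, \ldots, c_{m+1}\}$, so no other intervals can appear.

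Uniqueness is inherited from uniqueness in the PID structure theorem (equivalently, the Krull--Schmidt property for the additive category of persistence modules), and can alternatively be exhibited constructively: for any $0 \le b < d \le \infty$, the multiplicity of $I(b,d)$ equals the limit of a suitable alternating sum of persistent Betti numbers of $H_q(\K)$, a quantity manifestly determined by $H_q(\K)$ alone. The main obstacle is the endpoint bookkeeping: since the intervals are closed on the left and open on the right while the filtration is right-continuous, one must match these conventions with the indecomposables of the discrete quiver representation and with the convention $d = \infty$ for classes that persist forever. All of this is forced by tameness and right-continuity, but needs to be verified carefully to ensure that the discrete decomposition lifts to the claimed isomorphism \eqref{eq:decom}.
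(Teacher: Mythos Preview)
The paper does not prove this theorem; it is stated with a citation to \cite{zc} (Zomorodian--Carlsson) and used as a black box to define persistence diagrams. Your sketch is essentially the standard argument found in that reference and its descendants: discretize the tame module along its critical values, invoke either the structure theorem for finitely generated graded $\bF[x]$-modules or Gabriel's theorem for the $A_n$ quiver, and lift the interval decomposition back to the continuous parameter. So there is nothing to compare against in the paper itself, and your outline is correct and aligned with the cited source; the only caveat is that the endpoint bookkeeping you flag (matching the half-open interval convention with right-continuity of the filtration) is genuinely a point where sloppiness can introduce off-by-one errors, so if you were to write this out in full you would need to check it carefully rather than leave it as a remark.
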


Each summand $I(b_i,d_i)$ in (\ref{eq:decom}) is called a generator of the persistent homology and $(b_i,d_i)$ is called its birth-death pair. From the unique decomposition in Theorem \ref{thm:decomposition}, we define the $q$th persistence diagram as a multiset in  $\Rbar^2_{\geq 0}$,
\begin{align}\label{eq:pd}
D_q(\K)=\{(b_i,d_i)\in \Rbar_{\geq 0}^2 : i=1,\dots,p\}.
\end{align}
By denoting the
multiplicity of the point $(b,d)$ in (\ref{eq:pd}) by $m_{b,d}\in \bN_0=\{0,1,2,\dots\}$, we can also express
the decomposition  (\ref{eq:decom}) as 
\begin{align*}
H_q(\K)\simeq\bigoplus_{(b,d)} I(b,d)^{m_{b,d}}.
\end{align*}
Later, we identify a persistence diagram $D_q(\K)$
as an integer-valued Radon measure $\xi = \sum_{(b,d)} m_{b,d}
\delta_{(b,d)}$ rather than as a multiset. 

Although our target object $\K(\Phi)$ is built over infinite points, 
all persistent homologies studied in this paper are defined on the
geometric models with finite points. Hence, the persistent homology
becomes tame, and the persistence diagrams are well-defined. 

\begin{ex}
 \begin{figure}
\begin{minipage}{5.5cm}
\includegraphics[scale=0.6]{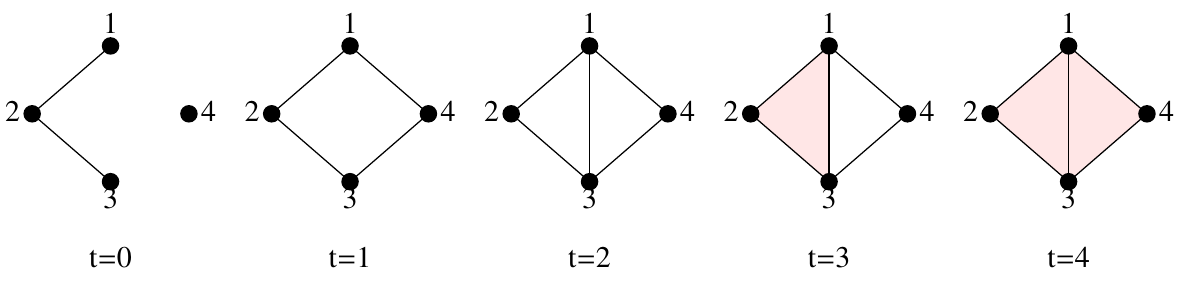}
\end{minipage}
 \hfill
\begin{minipage}{4cm}
\includegraphics[scale=0.3]{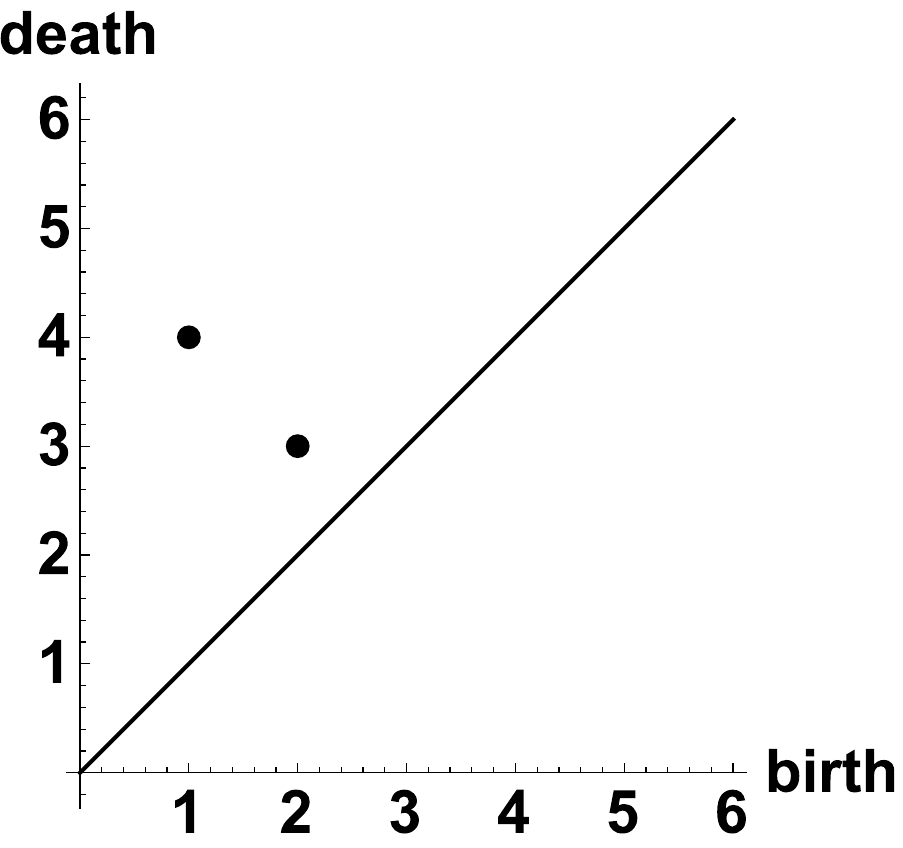}
\end{minipage}
\caption{A filtration of simplicial complexes and the $1$st persistence diagram} 
\label{fig:filtration_PD}
 \end{figure}
 In Figure~\ref{fig:filtration_PD}, 
 two (1-dim)cycles appear at times $1$ and $2$ and disappear at times 
 $3$ and $4$. The representation corresponding to $H_1(\K)$ is given as 
\begin{align*}
 0 \to \bF(c_1 + c_2) \to  \bF(c_1)& \oplus \bF(c_2)  \to \bF(c_1) \oplus \bF(c_2) / \bF(c_1)  \\
 &\to
 \bF(c_1) \oplus \bF(c_2) / \bF(c_1) \oplus \bF(c_2) \simeq 0, 
\end{align*}
 where $c_1 = \bra 12\ket + \bra 23 \ket+ \bra 31 \ket$
 and $c_2 = \bra 13 \ket + \bra 34 \ket + \bra 41 \ket$ and each arrow is
 the linear map induced by inclusion. 
 As pairs of birth-death times, we have $(1,4)$ and $(2,3)$ since 
the decomposition of the representation is given by 
\begin{align*}
H_1(\K) &= (0 \to \bF(c_1 + c_2) \to \bF(c_1 + c_2) \to \bF(c_1
 + c_2) \to 0) \\   
&\quad \oplus (0 \to 0 \to \bF(c_1) \to 0 \to 0).   
\end{align*}
\end{ex}

\begin{rem}
More generally, a persistence module $\U=(U_a,f_a^b)$ on
 $\Rbar_{\geq 0}$ is defined by a sequence of general vector
 spaces $U_a, a\geq 0,$ and linear maps $f_a^b \colon U_a\rightarrow U_b$ for
 $a\leq b$ satisfying $f_a^c=f_b^c\circ f_a^b$. Under the same
 definition of the tameness, we can similarly define its persistence
 diagrams.  
\end{rem}

 \begin{rem}
  There is another definition of persistent homology as graded modules
  over a monoid ring for the continuous parameter (resp. a polynomial ring for the
  discrete parameter). See, for example, \cite{hs_frieze}. 
 \end{rem}
 
  \begin{rem}
  The persistent homology $H_q(\K)$ defined over the whole $\Phi$ is not
   tame in general while $H_q(\K_L)$ defined over a restriction $\Phi_{\La_L}$ is tame. 
  Theorem~\ref{thm:LLN-PD} informally says that 
  \[
   \frac{1}{L^{N}} H_q(\K_{L}) \stackrel{\simeq}{\to} H_q(\K) =
   \int^{\oplus}_{\Delta} I(x,y)
  \nu_q(dxdy),
  \]
  where $\K_L = \{K(\Phi_{\La_L}, t)\}_{t \ge 0}$, where
   $\int_{\Delta}^{\oplus}$ denotes the direct integral of interval
   representations (cf. \cite{rs}). 
  \end{rem}

    \begin{rem}\label{rem:curve} 
     In our paper, we use the persistence diagram for representing
     topological information obtained from filtrations. 
     People sometimes use the so-called barcode representation in which
     each persistence interval $I(b,d)$ is represented as
     a barcode $[b,d]$ (cf. \cite{ya}).   
    We consider the marginal measure of persistence diagram 
     on death times (also on birth times),
     i.e., the induced measure $\xi^{(death)}$ obtained from
    a measure $\xi$ on $\Delta$ by the projection $\Delta \ni (x,y)
     \mapsto y \in [0,\infty)$.
    The marginal measure $\xi_{q,L}^{(death)}$
    of a persistence diagram $\xi_{q,L}$ 
     induces a (scaled) right-continuous step function
     $f_{q,L}(t) = L^{-N} \xi_{q,L}^{(death)}([0,t])$, which
     corresponds to the one obtained by simulation in \cite{ya}.  
     The function $f_{q, L}(t)$ is also expected to converge to a limit
     $f_{q,\infty}(t)$ as $L \to \infty$, however, it does not
     necessarily coincide with
     $f_q(t) := \nu_q^{(death)}([0,t])$ because of the mass escaping
     to $\partial \Delta$.   
    \end{rem}

\subsection{Persistent Betti numbers}

For a filtration $\K$, the $(r,s)$-persistent Betti number \cite{elz} is defined by
\begin{align}\label{eq:rsbetti}
\beta^{r,s}_q(\K)=\rank\frac{Z_q(K_r)}{Z_q(K_r)\cap B_q(K_s)} \quad (r \le s), 
\end{align}
where $Z_q(K_r)$ and $B_q(K_r)$ are the $q$th cycle group and boundary
group, respectively.
We remark that this is equal to the rank of $\iota_r^s \colon H_q(K_r)\rightarrow H_q(K_s)$, because
\[
	\image \iota_r^s\simeq\frac{\frac{Z_q(K_r)}{B_q(K_r)}}{\frac{Z_q(K_r)\cap B_q(K_s)}{B_q(K_r)}}\simeq \frac{Z_q(K_r)}{Z_q(K_r)\cap B_q(K_s)}. 
\]
Thus, from the decomposition of the persistent homology, we have 
\begin{align*}
\beta^{r,s}_q(\K)=\sum_{b\leq r, d> s}m_{b,d}.
\end{align*}
This means that the $(r,s)$-persistent Betti number $\beta_q^{r,s}(\K)$ counts the number of birth-death pairs in the persistence diagram $D_q(\K)$ located in the gray region of Figure $\ref{fig:pd}$. 
\begin{figure}[htbp]
 \begin{center}
  \includegraphics[width=0.3\hsize]{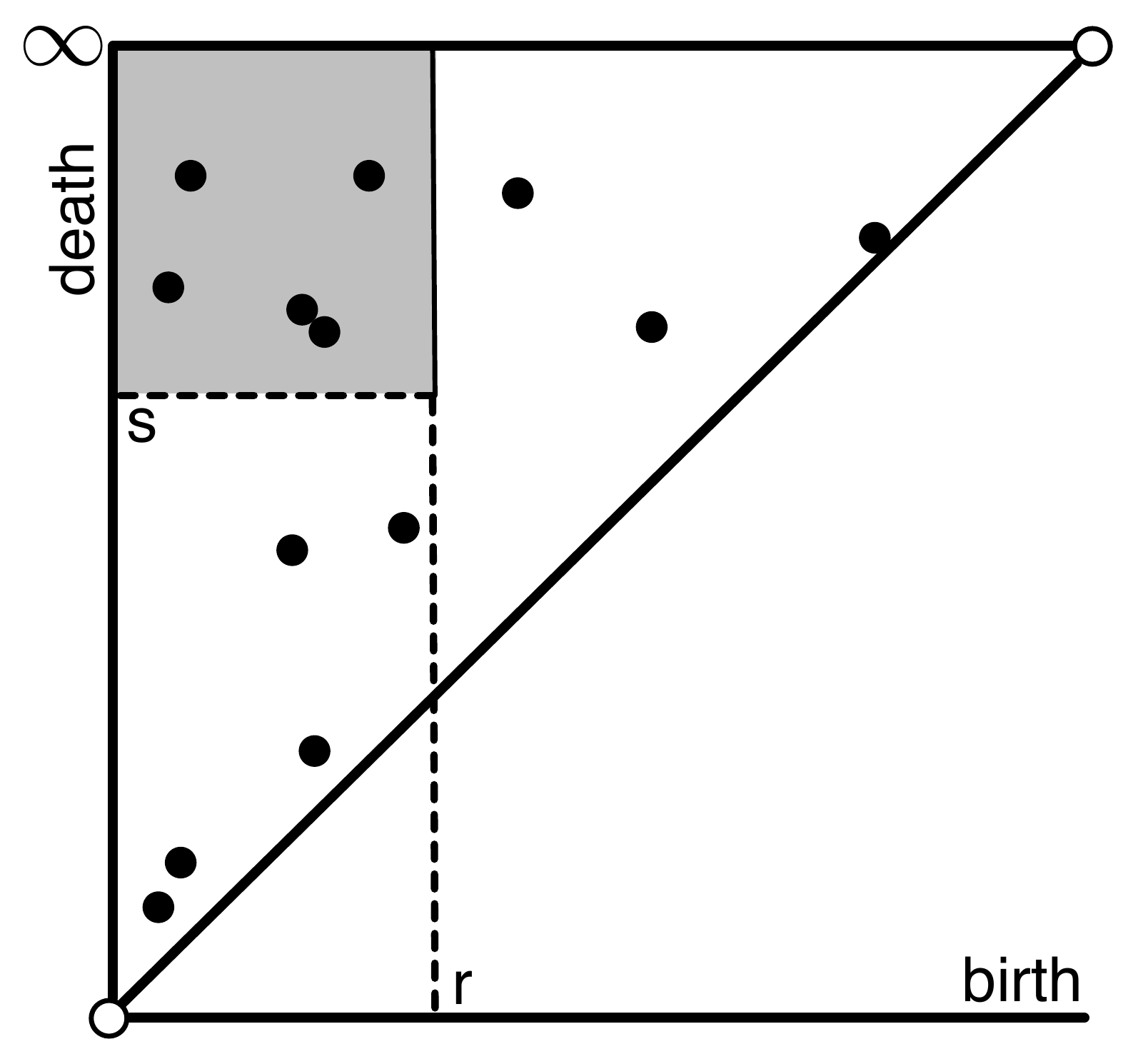}
  \caption{$\beta^{r,s}_q(\K)$ counts the number of generators in the
  gray region.}
  \label{fig:pd}
 \end{center}
\end{figure}

\begin{lem}\label{lem:truncation}
Let $\U=(U_a,f_a^b)$ be a persistence module on $\Rbar_{\geq 0}$ and let $\V=(V_a,g_a^b)$ be its truncation on the interval $[r,s]$, meaning that
\begin{align}
V_a=
\left\{\begin{array}{ll}
U_r, & a\leq r,\\
U_a, & r\leq a\leq s,\\
U_s, & a\geq s,
\end{array}\right. \quad
g_a^b=\left\{\begin{array}{ll}
f_a^b, & r\leq a\leq b\leq s,\\
f_a^s, & r\leq a\leq s\leq b,\\
f_r^b, & a\leq r\leq b\leq s,\\
f_r^s, & a\leq r\leq s\leq b.\\
\end{array}\right.
\end{align}
For interval decompositions $\U\simeq\oplus I(b,d)^{m_{b,d}}$ and $\V\simeq\oplus I(b,d)^{n_{b,d}}$, let 
\begin{align*}
\beta^{r,s}(\U)=\sum_{b\leq r, d>s}m_{b,d},\quad
\beta^{0,\infty}(\V)=\sum_{b=0, d=\infty}n_{b,d}.
\end{align*}
Then $\beta^{r,s}(\U)=\beta^{0,\infty}(\V)$.
\end{lem}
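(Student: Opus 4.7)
The plan is to identify both sides of the claimed equality with the rank of the same linear map, namely $f_r^s \colon U_r \to U_s$. From the interval decomposition $\U \simeq \bigoplus_{(b,d)} I(b,d)^{m_{b,d}}$ and the definition of the interval module $I(b,d)$, the restriction of $f_r^s$ to each summand $I(b,d)$ is the identity $\bF \to \bF$ exactly when $b \le r < s < d$, and is zero otherwise. Hence
\[
	\rank f_r^s \;=\; \sum_{b \le r,\ d > s} m_{b,d} \;=\; \beta^{r,s}(\U),
\]
giving the rank interpretation of the left-hand side.

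For the right-hand side, I would read off from the piecewise definition of the truncation that whenever $a \le r \le s \le b$ the map $g_a^b$ coincides with $f_r^s$; in particular $g_0^b = f_r^s$ for every $b \ge s$. Applying the analogous rank formula to $\V \simeq \bigoplus_{(b',d')} I(b',d')^{n_{b',d'}}$ gives
\[
	\rank g_0^b \;=\; \sum_{b' \le 0,\ d' > b} n_{b', d'} \;=\; \sum_{b' = 0,\ d' > b} n_{b', d'}.
\]
Choosing $b$ strictly larger than every finite death time appearing in the decomposition of $\V$ (possible by tameness, since only finitely many intervals occur), the only surviving terms are those with $d' = \infty$, and I conclude $\rank g_0^b = n_{0,\infty} = \beta^{0,\infty}(\V)$. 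Combining with the first paragraph yields $\beta^{r,s}(\U) = \beta^{0,\infty}(\V)$.

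The main step requiring care is the identification $g_0^b = f_r^s$ in the second paragraph, together with verifying that $\V$ is itself tame so that its interval decomposition exists with finitely many summands. Tameness of $\V$ follows directly from tameness of $\U$: each $V_a$ equals $U_r$, $U_a$, or $U_s$ (all finite-dimensional), and the homological critical values of $\V$ form a finite set contained in $(\{r,s\}\cup\{\text{critical values of }\U\})\cap[r,s]$. Beyond these observations, the argument is bookkeeping on which intervals of $\U$ (respectively $\V$) contribute to the rank of the relevant map, and no further input is needed.
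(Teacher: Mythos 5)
Your proposal is correct and takes essentially the same route as the paper, whose entire proof is the one-line identity $\beta^{r,s}(\U)=\rank f_r^s=\beta^{0,\infty}(\V)$; you simply fill in the bookkeeping on both sides. The only cosmetic point is that the condition for $f_r^s$ to restrict to the identity on $I(b,d)$ should read $b\le r\le s<d$ (allowing $r=s$) rather than $b\le r<s<d$.
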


\begin{proof}
This is because $\beta^{r,s}(\U)=\rank f_r^s=\beta^{0,\infty}(\V)$.
\end{proof}

Here, we recall the following basic facts in linear algebra for later use.
\begin{lem}\label{lem:rank_lemma}
Let $A,B,U,V$ be subspaces of a vector space satisfying $A\subset U$ and $B\subset V$. Then,
\begin{align*}
\rank \frac{U\cap V}{A\cap B}\leq \rank \frac{U}{A}+\rank\frac{V}{B}.
\end{align*}
\end{lem}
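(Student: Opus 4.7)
The plan is to exhibit a linear map from $U\cap V$ to $(U/A)\oplus(V/B)$ whose kernel is exactly $A\cap B$; the inequality will then follow at once from the first isomorphism theorem together with the additivity of rank on direct sums. Since $A\subset U$ and $B\subset V$, one has $A\cap B\subset U\cap V$, and for every $x\in U\cap V$ the cosets $x+A\in U/A$ and $x+B\in V/B$ are well defined, so the construction makes sense.

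Concretely, I would set $\phi\colon U\cap V\to (U/A)\oplus(V/B)$ by $\phi(x)=(x+A,\,x+B)$. Linearity is immediate, and $\phi(x)=0$ forces $x\in A$ and $x\in B$, i.e., $x\in A\cap B$; conversely every such $x$ lies in $\ker\phi$. Hence $\phi$ factors through $(U\cap V)/(A\cap B)$ as an injection $\bar\phi$, which yields
\[
\rank\frac{U\cap V}{A\cap B}\leq \rank\bigl((U/A)\oplus(V/B)\bigr)=\rank\frac{U}{A}+\rank\frac{V}{B}.
\]

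There is essentially no obstacle: the argument is a one-line application of the first isomorphism theorem, with no additional hypotheses required. The only minor caveat worth noting is that the inequality is vacuous unless the right-hand side is finite, in which case the injection $\bar\phi$ automatically forces the rank on the left to be finite as well, so the bound is meaningful in exactly the regime where it is used.
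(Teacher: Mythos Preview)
Your proof is correct and in fact more streamlined than the paper's. You build a single injection $(U\cap V)/(A\cap B)\hookrightarrow (U/A)\oplus(V/B)$ via the diagonal $x\mapsto(x+A,x+B)$ and read off the inequality directly. The paper instead factors through an intermediate quotient: it uses the surjection $(U\cap V)/(A\cap B)\to(U\cap V)/(A\cap V)$ with kernel $(A\cap V)/(A\cap B)$ to obtain the \emph{equality}
\[
\rank\frac{U\cap V}{A\cap B}=\rank\frac{A\cap V}{A\cap B}+\rank\frac{U\cap V}{A\cap V},
\]
and then bounds each summand by exhibiting injections $(A\cap V)/(A\cap B)\hookrightarrow V/B$ and $(U\cap V)/(A\cap V)\hookrightarrow U/A$. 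The paper's route gives a slightly finer intermediate identity, but for the stated inequality your one-map argument is shorter and entirely adequate.
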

\begin{proof}
Let us consider a surjective map
\begin{align*}
f \colon \frac{U\cap V}{A\cap B}\longrightarrow \frac{U\cap V}{A\cap V},\quad [c]\longmapsto [c]. 
\end{align*}
Since the kernel of the map is $\displaystyle{\kernel f =\frac{A\cap V}{A\cap B}}$, we have
\begin{align*}
\cfrac{~~\cfrac{U\cap V}{A\cap B}~~}{~~\cfrac{A\cap V}{A\cap B}~~}\simeq \frac{U\cap V}{A\cap V}.
\end{align*}
This leads to the conclusion that
\begin{align*}
\rank\frac{U\cap V}{A\cap B}=\rank \frac{A\cap V}{A\cap B} + \rank\frac{U\cap V}{A\cap V}
\leq \rank \frac{V}{B}+ \rank\frac{U}{A}.
\end{align*}
Here, the last inequality has been derived from the injectivity of the following maps
\begin{align*}
&\phi \colon  \frac{A\cap V}{A\cap B}\longrightarrow \frac{V}{B},\quad [c]\longmapsto [c],\\
&\psi \colon \frac{U\cap V}{A\cap V}\longrightarrow \frac{U}{A},\quad [c]\longmapsto [c].
\end{align*}
The proof is complete.
\end{proof}

\begin{lem}\label{lem:col_add}
Let 
$D=[A ~B]$ be a matrix composed by submatrices $A$ and $B$. Let $\ell$ be the number of columns in $B$. 
Then,
\begin{align*}
\rank D\leq \rank A +\ell,
\quad
\dim\kernel D\leq\dim\kernel A+\ell.
\end{align*}
\end{lem}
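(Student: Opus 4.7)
The plan is to reduce both inequalities to standard facts from linear algebra, namely subadditivity of rank under concatenation of columns and the rank-nullity theorem.

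For the first inequality, I would argue that the column space of $D$ equals the sum of the column spaces of $A$ and $B$, so $\rank D \le \rank A + \rank B$. Since $B$ has $\ell$ columns, $\rank B \le \ell$, and the first inequality follows immediately.

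For the second inequality, let $n_A$ denote the number of columns of $A$, so that $D$ has $n_A + \ell$ columns. Since every column of $A$ is also a column of $D$, the column space of $A$ is contained in that of $D$, hence $\rank A \le \rank D$. Applying the rank-nullity theorem to both $A$ and $D$, I obtain
\[
\dim \kernel D = (n_A + \ell) - \rank D \le (n_A + \ell) - \rank A = \dim \kernel A + \ell,
\]
which is the desired bound.

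The statement is elementary and I do not foresee any serious obstacle; the only thing to keep straight is that adding $\ell$ columns can increase the rank by at most $\ell$ (used directly in the first inequality and via rank-nullity in the second).
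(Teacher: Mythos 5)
Your proof is correct, but it takes a different route from the paper's. The paper argues by induction on the columns of $B$: it sets $D^{(i)}=[A~b_1\cdots b_i]$ and uses the single-column facts $\rank D^{(i)}\leq \rank D^{(i-1)}+1$ and $\dim\kernel D^{(i)}\leq \dim\kernel D^{(i-1)}+1$, summing over $i$. You instead prove the rank bound globally via $\operatorname{col}(D)=\operatorname{col}(A)+\operatorname{col}(B)$, hence $\rank D\le \rank A+\rank B\le \rank A+\ell$, and then derive the kernel bound from rank--nullity together with the monotonicity $\rank A\le \rank D$; both steps are sound (indeed $\dim\kernel D=(n_A+\ell)-\rank D\le (n_A+\ell)-\rank A=\dim\kernel A+\ell$). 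Your version dispenses with the induction and treats the two inequalities by two different mechanisms, which is slightly cleaner as a self-contained argument; the paper's one-column-at-a-time formulation has the minor advantage of handling rank and kernel symmetrically and of matching the way the lemma is actually applied later (simplices, i.e.\ columns, are added one at a time in Lemma \ref{lem:one_add} and Lemma \ref{lem:stabilization}). Either proof is acceptable.
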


\begin{proof}
Let $B=[b_1\cdots b_\ell]$, where $b_i$ is the $i$th column vector of $B$, and set $D^{(i)}=[A~b_1\cdots b_i]$. Then, for each $i$, we have
\begin{align*}
\rank D^{(i)}\leq \rank D^{(i-1)} +1,
\quad
\dim\kernel D^{(i)}\leq\dim\kernel D^{(i-1)}+1.
\end{align*}
Hence, in total, we have the desired inequalities.
\end{proof}

Now, we show a basic estimate on the persistent Betti number for nested filtrations $\K\subset \tilde\K$. First, we note the following property.
\begin{lem}\label{lem:one_add}
Let $\K$ be a filtration. For a fixed $a>0$, let $\tilde\K=\{\tilde K_t : t\geq 0\}$ be a filtration given by 
\begin{align*}
\tilde K_t=\left\{\begin{array}{ll}
K_t, & t< a,\\
K_t\cup\sigma, & t\geq a,
\end{array}\right.
\end{align*}
where $\sigma$ is a new simplex added on $K_a$. 
Then, $\beta_q^{r,s}(\tilde \K)=\beta_q^{r,s}(\K)$ for $\dim\sigma\neq q,q+1$. 
For $\dim\sigma=q,q+1$, 
\begin{align*}
|\beta_q^{r,s}(\tilde \K)-\beta_q^{r,s}(\K)|\leq 
\left\{
\begin{array}{ll}
0, & \tilde{K}_r=K_r~{\rm and}~\tilde{K}_s=K_s,\\
1, & {\rm otherwise.}
\end{array}
\right.
\end{align*}
\end{lem}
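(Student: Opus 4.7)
The plan is to exploit the formula
\[
\beta_q^{r,s}(\K) = \dim Z_q(K_r) - \dim\bigl(Z_q(K_r) \cap B_q(K_s)\bigr),
\]
combined with two structural observations: $Z_q(K_t)$ depends only on $C_q(K_t)$ and the restriction of $\partial_q$, while $B_q(K_t)$ depends only on $C_{q+1}(K_t)$ and the restriction of $\partial_{q+1}$. Since $\tilde\K$ differs from $\K$ by a single simplex $\sigma$, at most one column is appended to a single boundary matrix at filtration times $t \ge a$, and this is precisely where Lemma~\ref{lem:col_add} enters.

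First I would dispose of the case $\dim \sigma \notin \{q, q+1\}$: neither $C_q$ nor $C_{q+1}$ is altered at any filtration level, so $Z_q(\tilde K_t) = Z_q(K_t)$ and $B_q(\tilde K_t) = B_q(K_t)$ for every $t$, and $\beta_q^{r,s}$ is invariant.

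Next I would handle $\dim \sigma = q$. Here the $(q+1)$st chain groups are untouched, so $B_q(\tilde K_t) = B_q(K_t)$ at every level. If $a > r$ then $\tilde K_r = K_r$ and the persistent Betti number is unchanged, so only $a \le r$ needs attention. In that subcase, $\tilde\partial_q^{(r)}$ is obtained from $\partial_q^{(r)}$ by appending the single column $\partial\sigma$, so Lemma~\ref{lem:col_add} yields $|\dim Z_q(\tilde K_r) - \dim Z_q(K_r)| \le 1$. What I expect to be the main obstacle is showing that the relevant intersection is preserved despite $Z_q$ having genuinely changed: the argument will be that $B_q(\tilde K_s) = B_q(K_s) \subset C_q(K_s)$ has no $\sigma$-component, so any vector of $Z_q(\tilde K_r) \cap B_q(\tilde K_s)$ must have zero $\sigma$-coordinate and hence already lies in $Z_q(K_r) \cap B_q(K_s)$; the reverse inclusion is immediate. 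The change in $\beta_q^{r,s}$ then reduces to $\dim Z_q(\tilde K_r) - \dim Z_q(K_r) \in \{0,1\}$.

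Finally, for $\dim \sigma = q+1$ the $q$-chain groups are untouched, so $Z_q(\tilde K_t) = Z_q(K_t)$ throughout. If $a > s$ nothing relevant changes. Otherwise $B_q(\tilde K_s)$ equals either $B_q(K_s)$ or $B_q(K_s) + \F\,\partial\sigma$, and the inclusion $Z_q(K_r) \cap B_q(K_s) \subset Z_q(K_r) \cap B_q(\tilde K_s)$ embeds the quotient into the at-most-one-dimensional $B_q(\tilde K_s)/B_q(K_s)$, yielding $|\beta_q^{r,s}(\tilde\K) - \beta_q^{r,s}(\K)| \le 1$. This disposes of all cases.
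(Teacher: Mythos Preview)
Your proposal is correct and follows essentially the same approach as the paper: both express $\beta_q^{r,s}(\tilde\K)-\beta_q^{r,s}(\K)$ through the decomposition $\dim Z_q(K_r)-\dim\bigl(Z_q(K_r)\cap B_q(K_s)\bigr)$ and invoke Lemma~\ref{lem:col_add} to control the one-column change. The only distinction is that the paper treats the $q$- and $(q+1)$-simplex cases uniformly via Lemma~\ref{lem:rank_lemma}, whereas you argue each case directly (showing the intersection is exactly preserved when $\dim\sigma=q$, and embedding into $B_q(\tilde K_s)/B_q(K_s)$ when $\dim\sigma=q+1$); these ad hoc arguments are effectively special instances of that lemma.
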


\begin{proof}

We first note that 
\begin{align*}
\beta_q^{r,s}(\tilde\K)-\beta_q^{r,s}(\K)=&\rank Z_q(\tilde K_r)-\rank Z_q(\tilde K_r)\cap B_q(\tilde K_s)\\
&-(\rank Z_q(K_r)-\rank Z_q(K_r)\cap B_q(K_s))\\
=&\rank\frac{Z_q(\tilde K_r)}{Z_q(K_r)}-\rank\frac{Z_q(\tilde K_r)\cap B_q(\tilde K_s)}{Z_q(K_r)\cap B_q(K_s)}.
\end{align*}
Hence, the statement is trivial for $\dim\sigma\neq q,q+1$. 
Furthermore, when $\tilde{K}_r=K_r$ and $\tilde{K}_s=K_s$, 
we also have $\beta_q^{r,s}(\tilde \K)=\beta_q^{r,s}(\K)$.

Let $\dim\sigma=q$.  
Then, it follows from Lemma \ref{lem:col_add} that
\begin{align*}
&\rank\frac{Z_q(\tilde K_r)}{Z_q(K_r)}=\rank Z_q(\tilde K_r)-\rank Z_q(K_r)=0{\rm~or~}1,\\
&\rank\frac{B_q(\tilde K_s)}{B_q(K_s)}=0.
\end{align*}
Also, from Lemma \ref{lem:rank_lemma}, we have
\begin{align*}
0\leq \rank\frac{Z_q(\tilde K_r)\cap B_q(\tilde K_s)}{Z_q(K_r)\cap B_q(K_s)}\leq
\rank\frac{Z_q(\tilde K_r)}{Z_q(K_r)}+\rank\frac{B_q(\tilde K_s)}{B_q(K_s)}\leq\rank\frac{Z_q(\tilde K_r)}{Z_q(K_r)}.
\end{align*}
Therefore, $|\beta_q^{r,s}(\tilde\K)-\beta_q^{r,s}(\K)|\leq 1$. The statement for $\dim\sigma=q+1$ is similarly proved. 
\end{proof}

\begin{lem}\label{lem:multi_add}
Let $\K=\{K_t\}_{t\geq 0}$ and $\tilde \K=\{\tilde K_t\}_{t\geq 0}$ be filtrations with $K_t\subset \tilde K_t$ for $t\geq 0$. 
Then, 
\begin{align*}
|\beta_q^{r,s}(\tilde \K)-\beta_q^{r,s}(\K)|\leq
 \sum_{j=q,q+1}\left(|\tilde K_{s,j}\setminus
 K_{s,j}|+|\{\sigma\in K_{s,j}\setminus K_{r,j}:\tilde t_\sigma\leq r
 \}|\right), 
\end{align*}
where $\tilde K_{t,j}~(or~K_{t,j})$ is the set of $j$-simplices in
 $\tilde K_t~(or~K_t)$, and $\tilde t_\sigma~(or~t_\sigma)$ is the birth
 time of $\sigma$ in the filtration $\tilde\K~(or~\K)$. 

\end{lem}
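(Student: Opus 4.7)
The plan is to compute the difference $\beta_q^{r,s}(\tilde\K)-\beta_q^{r,s}(\K)$ directly from the rank formula~\eqref{eq:rsbetti}, reduce the absolute value to two non-negative increments of dimensions of cycle and boundary spaces using Lemma~\ref{lem:rank_lemma}, and then control those increments combinatorially via Lemma~\ref{lem:col_add}; a short partition argument converts the resulting bound into the stated form.

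First I would introduce
\[
\alpha = \dim \frac{Z_q(\tilde K_r)}{Z_q(K_r)}, \qquad
\beta  = \dim \frac{Z_q(\tilde K_r) \cap B_q(\tilde K_s)}{Z_q(K_r) \cap B_q(K_s)}, \qquad
\gamma = \dim \frac{B_q(\tilde K_s)}{B_q(K_s)},
\]
all well-defined and non-negative because $K_t\subset \tilde K_t$ for every $t$ yields the inclusions $Z_q(K_r)\subset Z_q(\tilde K_r)$ and $B_q(K_s)\subset B_q(\tilde K_s)$. Expanding~\eqref{eq:rsbetti} then gives $\beta_q^{r,s}(\tilde\K)-\beta_q^{r,s}(\K)=\alpha-\beta$. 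Applying Lemma~\ref{lem:rank_lemma} with $(A,B,U,V)=(Z_q(K_r),B_q(K_s),Z_q(\tilde K_r),B_q(\tilde K_s))$ produces $\beta\le \alpha+\gamma$, and a case-split on whether $\alpha\ge \beta$ gives $|\alpha-\beta|\le \max(\alpha,\gamma)\le \alpha+\gamma$. Writing $\partial_q\colon C_q(\tilde K_r)\to C_{q-1}(\tilde K_r)$ as a matrix in block form $[M_{K_r}\mid M_{\tilde K_r\setminus K_r}]$, the kernel bound of Lemma~\ref{lem:col_add} yields $\alpha\le |\tilde K_{r,q}\setminus K_{r,q}|$; applying the rank bound of the same lemma to $\partial_{q+1}\colon C_{q+1}(\tilde K_s)\to C_q(\tilde K_s)$ gives $\gamma\le |\tilde K_{s,q+1}\setminus K_{s,q+1}|$.

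Finally, I would partition $\tilde K_{r,q}\setminus K_{r,q}$: any $\sigma$ in this set belongs to $\tilde K_r\subset \tilde K_s$ and satisfies $\tilde t_\sigma\le r$, so it either lies in $\tilde K_{s,q}\setminus K_{s,q}$ or lies in $K_{s,q}\setminus K_{r,q}$ with $\tilde t_\sigma\le r$. This gives $|\tilde K_{r,q}\setminus K_{r,q}|\le |\tilde K_{s,q}\setminus K_{s,q}|+|\{\sigma\in K_{s,q}\setminus K_{r,q}:\tilde t_\sigma\le r\}|$, and trivially inflating the $(q+1)$-summand by the nonneg term $|\{\sigma\in K_{s,q+1}\setminus K_{r,q+1}:\tilde t_\sigma\le r\}|$ yields the claimed inequality. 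The main step is the application of Lemma~\ref{lem:rank_lemma}, which replaces the awkward intersection $Z_q\cap B_q$ by separately controlled increments of $Z_q$ and $B_q$; the remainder is routine. A natural alternative is an iterated use of Lemma~\ref{lem:one_add} along a sequence of single-simplex additions to the pair $(K_r,K_s)$---adding $\sigma$ to both $K_r$ and $K_s$ when $\sigma\in \tilde K_r\setminus K_r$, or to $K_s$ alone when $\sigma\in(\tilde K_s\setminus K_s)\setminus \tilde K_r$---but this requires careful ordering to keep every intermediate stage a valid simplicial pair, which the direct algebraic argument avoids.
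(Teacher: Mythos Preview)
Your proof is correct, and it takes a genuinely different route from the paper. The paper proceeds by iterating Lemma~\ref{lem:one_add}: it truncates both filtrations to the interval $[r,s]$ via Lemma~\ref{lem:truncation}, orders the simplices of $\tilde K_s\setminus K_r$ by their $\tilde\K$-birth times, and builds a chain of filtrations $\K=\K^0\subset\cdots\subset\K^L=\tilde\K$ by adding one simplex at a time. Each step contributes at most $1$ to the persistent Betti number, and only the steps corresponding to simplices in the set $Y=(\tilde K_s\setminus K_s)\sqcup\{\sigma\in K_s\setminus K_r:\tilde t_\sigma\le r\}$ can contribute nontrivially, yielding $|Y_q|+|Y_{q+1}|$. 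This is exactly the ``natural alternative'' you sketch at the end.

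Your approach instead lifts the rank-lemma argument used \emph{inside} the proof of Lemma~\ref{lem:one_add} to the full pair $(\K,\tilde\K)$ in one shot: you write the difference as $\alpha-\beta$, bound $\beta\le\alpha+\gamma$ via Lemma~\ref{lem:rank_lemma}, and then control $\alpha$ and $\gamma$ combinatorially by Lemma~\ref{lem:col_add}. This is shorter, avoids both the truncation lemma and the careful simplex ordering, and in fact yields the slightly sharper bound $|\tilde K_{r,q}\setminus K_{r,q}|+|\tilde K_{s,q+1}\setminus K_{s,q+1}|$ before you inflate it to match the stated symmetric form. The paper's iterative argument, on the other hand, makes the role of each simplex more transparent and connects the lemma directly to the single-addition Lemma~\ref{lem:one_add}, which is conceptually appealing even if it requires more bookkeeping.
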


\begin{proof}
We first decompose $\tilde K_s\setminus K_r=Y\sqcup Y^c$ by
\begin{align*}
Y=(\tilde K_s\setminus K_s)\sqcup\{\sigma\in K_{s}\setminus K_{r}:\tilde
 t_\sigma\leq r\}, \ 
Y^c=\{\sigma\in K_{s}\setminus K_{r}:r<\tilde t_\sigma\leq t_{\sigma}\}. 
\end{align*}
We use the same notation $Y_j$ for the set of $j$-simplices in $Y$. 
For the simplices in $\tilde K_s\setminus K_r=\{\sigma_i\}_{i=1}^L$, we assign
 their indices so that the birth times are in increasing order $\tilde t_{\sigma_1}\leq\dots\leq\tilde t_{\sigma_L}$ and $K_r\cup \{\sigma_1,\dots,\sigma_\ell\}$ becomes a simplicial complex for each $\ell$. 
We note that $\tilde t_{\sigma}\leq t_\sigma$. Furthermore, it suffices to consider the truncations of $\K$ and $\tilde\K$ on $[r,s]$ from Lemma \ref{lem:truncation}.

Now, we inductively construct a sequence of filtrations $\K=\K^0\subset
 \K^1\subset\dots\subset \K^L=\tilde \K$.  The filtration
 $\K^i=\{K^i_t:t\geq 0\}$ is given by adding a simplex $\sigma_i$ to
 $\K^{i-1}$ at $\tilde t_{\sigma_i}$, i.e., 
\begin{align*}
K^i_t=\left\{
\begin{array}{ll}
K_t^{i-1},&t<\tilde t_{\sigma_i},\\
K^{i-1}_t\cup\{\sigma_i\},& t\geq\tilde t_{\sigma_i}.
\end{array}
\right.
\end{align*}
Then, it follows from Lemma \ref{lem:one_add} that
 $|\beta^{r,s}_q(\K^i)-\beta^{r,s}_q(\K^{i-1})|\leq1$ for $\sigma_i\in
 Y$, since $K^i_r\neq K^{i-1}_r$ or $K^i_s\neq K^{i-1}_s$ holds. 
On the other hand, Lemma \ref{lem:one_add} implies $\beta^{r,s}_q(\K^i)=\beta^{r,s}_q(\K^{i-1})$ for $\sigma\in Y^c$.
Therefore, 
\begin{align*}
|\beta_q^{r,s}(\tilde \K)-\beta_q^{r,s}(\K)|\leq
 \sum_{i=1}^L|\beta_q^{r,s}(\K^i)-\beta_q^{r,s}(\K^{i-1})|
\le |Y_q|+|Y_{q+1}|,
\end{align*}
which completes the proof of Lemma~\ref{lem:multi_add}.
\end{proof}

\begin{rem}{\rm
Let $\Phi, \tilde{\Phi} \in \finite(\bR^{N})$ with $\Phi \subset \tilde{\Phi}$, and $t_\sigma$ and $\tilde{t}_\sigma$ be the birth times of the simplex $\sigma$ in the $\kappa$-filtrations $\K(\Phi)$ and $\K(\tilde{\Phi})$, respectively. Then, it is obvious that 
$\tilde{t}_{\sigma} = t_{\sigma}$ 
if $\sigma \subset \Phi \subset \tilde{\Phi}$.
Hence, for the estimate $|\beta^{r,s}_q(\K(\tilde\Phi))-\beta^{r,s}_q(\K(\Phi))|$, 
the second term obtained in Lemma \ref{lem:multi_add} does not appear under this setting. 
}
\end{rem}

\section{General theory of random measures}\label{sec:rm}
Let $S$ be a locally compact Hausdorff space with
countable basis and $\cS$ be the Borel $\sigma$-algebra on $S$. 
It is well known that $S$ is a Polish space, i.e., a complete separable 
metrizable space.  If needed, we take a metric $\rho$ which makes $S$
complete and separable. 
We denote by $\bdd(S)$ the ring 
of all relatively compact sets in $\cS$. 
A measure $\mu$ on $(S, \cS)$ is said to be a \textit{Radon measure}  
if $\mu(B) < \infty$ for every $B \in \bdd(S)$. 
Let $\rM(S)$ be the set of all Radon measures on
$(S, \cS)$ and $\cM(S)$ be the $\sigma$-algebra  generated by the mappings 
$\rM(S) \ni \mu \mapsto \mu(B) \in [0,\infty)$ for every $B \in \bdd(S)$.  

We say that a sequence $\{\mu_n\}_{n \ge 1} \subset \rM(S)$ converges
to $\mu \in \rM(S)$ \textit{vaguely} (or \textit{in the vague topology})
if $\bra \mu_n, f \ket \to \bra \mu, f \ket$
for every continuous function $f$ with compact support, where $\bra \mu, f \ket = \int_S f(x)d\mu(x)$. 
In this case, we write $\mu_n \vto \mu$. The space $\rM(S)$ equipped with
the vague topology again becomes a Polish space and its Borel
$\sigma$-algebra coincides with $\cM(S)$.

We denote by $\rN(S)$ the subset in $\rM(S)$ of all integer-valued Radon measures on $S$.  
Each element in $\rN(S)$ can be expressed as a sum of
delta measures, i.e., $\mu = \sum_{i} \delta_{x_i} \in \rN(S)$.
We note that the set $\rN(S)$ is a closed subset of $\rM(S)$ in the vague
topology. 

An $\rM(S)$-valued (resp.~$\rN(S)$-valued) random variable
$\xi = \xi_{\omega}$  on a probability space $(\Omega, \cF, \PP)$
is called a \textit{random measure} (resp. \textit{point process}) on $S$. 
If $\lambda_1(A) := \E[\xi(A)] < \infty$ for all $A \in \bdd(S)$,
then $\lambda_1$ defines
a Radon measure and is referred to as  the mean measure, or the
intensity measure of $\xi$. Sometimes we denote it by $\E[\xi]$.


In this paper, two kinds of point processes will appear.
 One is point processes on $\bR^{N}$ as spatial point data
and the other is point processes on
 $\Delta = \{(x,y) \in \Rbar^2 : 0 \leq x < y \leq \infty\}$
 as persistence diagrams. 
The former will be denoted by the upper case letters like $\Phi$ 
and the latter by the lower case letters like $\xi$.

The point process $\Phi$ on $\bR^{N}$ is called {\it stationary}, if the distribution $\PP\Phi^{-1}$ is invariant under translations, i.e., $\PP\Phi^{-1}_x=\PP\Phi^{-1}$ for any $x\in \bR^{N}$, where $\Phi_x$ is the translated point process defined by $\Phi_x(B)=\Phi(B-x)$ for $B\in\bdd(\bR^{N})$. For $A\subset \rM(\bR^{N})$, let $A_x=\{\mu_x :\mu\in A\}$ be a set of translated measures defined by $\mu_x(B)=\mu(B-x)$.
Given a point process $\Phi$, let $\cI$ be the translation invariant
$\sigma$-field in $\rN(\bR^{N})$, i.e., the class of subsets $I\subset
\rN(\bR^{N})$ satisfying 
\[
\PP\Phi^{-1}((I\setminus I_x)\cup (I_x\setminus I))=0
\]
for all $x\in\bR^{N}$.
Then, $\Phi$ is called {\it ergodic} if $\cI$ is trivial, that is, for
every $I\in\cI$, $\PP\Phi^{-1}(I) \in \{0,1\}$.

From now on and until the end of this section, we fix a space $S$ and write $\bdd$ and $\rM$ for $\bdd(S)$ and $\rM(S)$, respectively. 
For a subset $A\subset S$, we denote by $\partial A$ and $A^{\circ}$ the boundary and interior of $A$, respectively. 
For a measure $\mu \in \rM$, let $\bdd_\mu :=\{ B \in \bdd: \mu(\partial B) = 0\}$ be the class of relatively compact continuity sets of $\mu$.

\begin{lem}[{\cite[15.7.2]{Kallenberg}}]\label{lem:equivalent-condition-vague-convergence}
Let $\mu, \mu_1,\mu_2, \ldots \in \rM$. Then the following statements
 are equivalent: 
\begin{itemize}
	\item[\rm (i)] $\mu_n \vto \mu$;
	\item[\rm (ii)] $\mu_n (B) \to \mu(B)$ for all $B \in
		     \bdd_{\mu}$;
	\item[\rm (iii)] $\limsup_{ n \to \infty } \mu_n (F) \le \mu(F)$
		     and $\liminf_{n \to \infty} \mu_n (G) \ge \mu(G)$ 
		     for all closed $F \in \bdd$ and open $G \in \bdd$. 
\end{itemize}
\end{lem}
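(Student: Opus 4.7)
The plan is to establish the cycle of implications $(i) \Rightarrow (iii) \Rightarrow (ii) \Rightarrow (i)$, which is the vague-convergence analogue of the classical Portmanteau theorem.

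For $(i) \Rightarrow (iii)$, I would exploit the regularity of Radon measures on the locally compact Polish space $S$. Given a closed $F \in \bdd$ and $\eps > 0$, outer regularity furnishes an open, relatively compact neighborhood $U \supset F$ with $\mu(\bar U) \le \mu(F) + \eps$, and Urysohn's lemma delivers a continuous function $f$ compactly supported in $\bar U$ with $\one_F \le f \le \one_{\bar U}$. Vague convergence then gives
\[
\limsup_n \mu_n(F) \le \lim_n \bra \mu_n, f \ket = \bra \mu, f \ket \le \mu(\bar U) \le \mu(F) + \eps,
\]
and sending $\eps \to 0$ yields the closed-set bound. The open-set bound is dual, using inner regularity and a continuous $f$ squeezed between $\one_K$ and $\one_G$ for a compact $K \subset G$.

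The implication $(iii) \Rightarrow (ii)$ is a short squeeze. For $B \in \bdd_\mu$, applying (iii) to $\bar B$ and $B^\circ$ gives
\[
\mu(B) = \mu(B^\circ) \le \liminf_n \mu_n(B^\circ) \le \limsup_n \mu_n(\bar B) \le \mu(\bar B) = \mu(B),
\]
where the outer equalities use $\mu(\partial B) = 0$; since $B^\circ \subset B \subset \bar B$, each $\mu_n(B)$ is squeezed to the common value $\mu(B)$.

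The main work, and the step I expect to dominate the argument, is $(ii) \Rightarrow (i)$: turning set-wise convergence into integral convergence against compactly supported continuous test functions. Fix such an $f$ with support $K$ compact and, without loss of generality, $f \ge 0$. The strategy is to sandwich $f$ uniformly by simple functions whose defining level sets all lie in $\bdd_\mu$. First I would pick a compact continuity set $B \in \bdd_\mu$ with $K \subset B^\circ$, which exists because only countably many closed $\delta$-thickenings of $K$ can have boundaries of positive $\mu$-mass. Next, for each $\eps > 0$, choose a partition $0 = a_0 < a_1 < \cdots < a_m$ of $[0, \|f\|_\infty + 1]$ with mesh $\le \eps$ and $\mu(f^{-1}\{a_j\}) = 0$ for every $j \ge 1$; this is possible since at most countably many real values $a$ can have $\mu(f^{-1}\{a\}) > 0$. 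Each $B_j := \{f \ge a_j\}$, for $j \ge 1$, is compact (since contained in $K$) and lies in $\bdd_\mu$, because $\partial B_j \subset \{f = a_j\}$. Setting $B_0 := B$, the simple functions
\[
\underline f_\eps = \sum_{j=1}^m a_{j-1}(\one_{B_{j-1}} - \one_{B_j}), \qquad \overline f_\eps = \sum_{j=1}^m a_j(\one_{B_{j-1}} - \one_{B_j})
\]
satisfy $\underline f_\eps \le f \le \overline f_\eps$ and $\overline f_\eps - \underline f_\eps \le \eps \, \one_B$. Applying (ii) to each $B_j$ (including $B_0 = B$) yields $\bra \mu_n, \underline f_\eps \ket \to \bra \mu, \underline f_\eps \ket$ and analogously for $\overline f_\eps$; letting first $n \to \infty$ and then $\eps \to 0$ produces $\bra \mu_n, f \ket \to \bra \mu, f \ket$. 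The core technical point is precisely the joint selection of an enclosing continuity set and an $\eps$-fine grid of continuity levels; once these are secured, the rest is routine.
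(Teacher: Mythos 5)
The paper does not prove this lemma at all: it is quoted verbatim from Kallenberg (15.7.2) as a known fact, so there is no in-paper argument to compare against. Your proposal supplies a correct, self-contained Portmanteau-type proof of the cited result. The cycle $(i)\Rightarrow(iii)\Rightarrow(ii)\Rightarrow(i)$ is the standard route, and the two delicate points are handled properly: in $(i)\Rightarrow(iii)$ one needs not just outer regularity but the usual local-compactness refinement producing an open $U\supset F$ whose \emph{closure} still has measure at most $\mu(F)+\eps$ (take $U_1\supset F$ open with $\mu(U_1)\le\mu(F)+\eps$ and then squeeze $F\subset U\subset\bar U\subset U_1$ with $\bar U$ compact), which your phrasing implicitly assumes and which is available here; and in $(ii)\Rightarrow(i)$ the key step is exactly the one you isolate, namely selecting an enclosing set $B\supset\support f$ and level values $a_j$ all lying in $\bdd_\mu$, which works because the boundaries of the $\delta$-thickenings of $K$ (resp.\ the level sets $f^{-1}\{a\}$) are pairwise disjoint and $\mu$ is finite on a relatively compact neighborhood, so only countably many can carry mass. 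The sandwich $\underline f_\eps\le f\le\overline f_\eps$ with $\overline f_\eps-\underline f_\eps\le\eps\,\one_B$ and $\mu(B)<\infty$ then closes the argument. Nothing is missing; this is a valid replacement for the external citation.
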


\begin{lem}[{\cite[15.7.5]{Kallenberg}}]\label{lem:tightness}
A subset $C$ in $\rM$ is relatively compact in the vague topology iff 
\[
	\sup_{\mu \in C} \mu(B) < \infty \ \text{for every $B \in \bdd$}.
\]
\end{lem}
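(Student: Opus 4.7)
The plan is to prove the two implications separately, relying on two standard facts: (i) vague convergence is exactly the topology of convergence of integrals against test functions in $C_c(S)$, and (ii) the Riesz--Markov representation theorem on a locally compact Hausdorff space identifies positive linear functionals on $C_c(S)$ with Radon measures on $(S,\cS)$.

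For the easy direction (necessity), suppose $C$ is relatively compact. Fix $B \in \bdd$ and use Urysohn's lemma together with local compactness to pick $f \in C_c(S)$ with $\one_B \le f \le 1$. If $\sup_{\mu \in C}\mu(B) = \infty$, choose $\mu_n \in C$ with $\mu_n(B) \to \infty$. Since $\rM$ with the vague topology is Polish (hence metrizable), some subsequence converges vaguely to a Radon measure $\mu$, and then $\bra \mu_n, f\ket \to \bra \mu, f\ket < \infty$. This contradicts $\bra \mu_n, f\ket \ge \mu_n(B) \to \infty$.

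For the harder direction (sufficiency), assume $M_B := \sup_{\mu\in C}\mu(B) < \infty$ for every $B \in \bdd$. Since $S$ is locally compact and second countable, I first fix an exhaustion $K_1 \subset K_2^{\circ} \subset K_2 \subset K_3^{\circ} \subset \cdots$ by compact sets with $\bigcup_n K_n = S$. Each Banach space $C(K_n)$ is separable, so I can pick a countable family $\mathscr D = \{\varphi_j\}_{j\ge1} \subset C_c(S)$ whose restrictions to every $K_n$ are uniformly dense in $C(K_n)$. Given any sequence $(\mu_n) \subset C$ and each $\varphi_j$ supported in $K_{n(j)}$, the estimate $|\bra \mu_n, \varphi_j\ket| \le \|\varphi_j\|_\infty \cdot M_{K_{n(j)}}$ holds uniformly in $n$. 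A standard diagonal extraction produces a subsequence $(\mu_{n_k})$ for which $\bra \mu_{n_k}, \varphi_j\ket$ converges for every $j$. Using the uniform bound $\sup_k \mu_{n_k}(K) \le M_K$ as a substitute for equicontinuity, uniform approximation on each compact $K$ upgrades this to convergence of $\bra \mu_{n_k}, f\ket$ for every $f \in C_c(S)$. The limit $\Lambda(f) := \lim_k \bra \mu_{n_k}, f\ket$ is a positive linear functional on $C_c(S)$ satisfying $|\Lambda(f)| \le \|f\|_\infty \cdot M_{\support f}$, so the Riesz--Markov theorem produces a Radon measure $\mu$ with $\Lambda(f) = \bra \mu, f\ket$. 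Thus $\mu_{n_k} \vto \mu$, and $C$ is relatively compact.

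The step I expect to be the main obstacle is the upgrade from convergence on the countable family $\mathscr D$ to convergence on all of $C_c(S)$. The uniform mass bound $\sup_n \mu_n(K) \le M_K$ is what makes this work, since density of $\mathscr D$ in $C(K)$ only gives uniform approximation in $\|\cdot\|_\infty$, and turning that into closeness of $\bra \mu_n,\cdot\ket$ requires a uniform bound on the mass each $\mu_n$ places on the support. The same bound is also needed to see that the limiting functional $\Lambda$ is locally bounded, hence representable by an honest Radon measure rather than some larger Borel measure.
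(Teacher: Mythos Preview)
Your argument is correct and follows the standard route for this classical result. Note, however, that the paper does not supply its own proof of this lemma: it is stated with a citation to Kallenberg and used as a black box, so there is no ``paper's proof'' to compare against.

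One small point worth tightening in your sufficiency argument: the phrase ``restrictions to every $K_n$ are uniformly dense in $C(K_n)$'' does not by itself control the support of the approximating $\varphi_j$, and you need such control to bound $\langle \mu_{n_k}, f - \varphi_j\rangle$ by $\|f-\varphi_j\|_\infty$ times the mass of a \emph{fixed} compact set. The clean fix is to choose $\mathscr D$ so that for each $n$ the functions in $\mathscr D$ supported inside $K_{n+1}$ are sup-norm dense in $\{f \in C_c(S): \support f \subset K_n^\circ\}$; separability of each $C_0(K_n^\circ)$ makes this possible. Then for $f$ supported in $K_n^\circ$ you can pick $\varphi_j \in \mathscr D$ with $\support \varphi_j \subset K_{n+1}$ and $\|f-\varphi_j\|_\infty < \eps$, giving $|\langle \mu_{n_k}, f - \varphi_j\rangle| \le \eps\, M_{K_{n+1}}$ uniformly in $k$. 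With this adjustment the upgrade step and the rest of the proof go through exactly as you outlined.
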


A class $\mfA \subset \bdd$ is called a
\textit{convergence-determining class} (for vague convergence)
if for every $\mu\in \rM$ and every sequence $\{\mu_n\}\subset \rM$, the condition  
\[
	\mu_n(A) \to \mu(A) \text{ for all $A \in \mfA \cap \bdd_\mu$}
\]
implies the vague convergence  $\mu_n \vto \mu$.
A class $\mfA_\mu \subset \mfB_\mu$ is called a convergence-determining class for $\mu$ if for any sequence $\{\mu_n\} \subset \rM$, the condition 
\[
	\mu_n(A) \to \mu(A) \text{ as } n \to \infty \text{ for all } A \in \mfA_\mu,
\]
implies that $\mu_n \vto \mu$. By definition, a class $\mfA$ is a convergence-determining class if and only if for any $\mu \in \rM$, $\mfA_\mu = \mfA \cap \mfB_\mu$ is a convergence-determining class for $\mu$. 

We say that a class $\mfC$ has the \textit{finite covering property} if 
any subset $B \in \bdd$ can be covered by a finite union of $\mfC$-sets.

\begin{lem} \label{lem:existence-vague-limit}
Let $\mfA$ be a convergence-determining class with finite covering property. 
Let $\{\mu_n\}$ be a sequence of measures in $\rM$. 
If $\mu_n(A)$ converges to a finite limit for any $A\in \mfA$, then there exists a measure $\mu$ to which
the sequence $\{\mu_n\}$ converges vaguely.
\end{lem}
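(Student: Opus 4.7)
The plan is to establish vague relative compactness of $\{\mu_n\}$ first, then extract a convergent subsequence, and finally promote that subsequence limit to a full-sequence vague limit by using the convergence-determining property of $\mfA$.

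First, I would verify the hypothesis of \lref{lem:tightness}. Given any $B \in \bdd$, the finite covering property gives sets $A_1, \dots, A_k \in \mfA$ with $B \subset \bigcup_{i=1}^k A_i$. By monotonicity and subadditivity,
\[
\mu_n(B) \le \sum_{i=1}^k \mu_n(A_i).
\]
Since each sequence $\{\mu_n(A_i)\}_n$ converges to a finite limit, it is bounded, so $\sup_n \mu_n(B) < \infty$. Hence $\{\mu_n\}$ is vaguely relatively compact in $\rM$.

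Next, I would pick any vaguely convergent subsequence $\mu_{n_k} \vto \mu$ with $\mu \in \rM$ and identify the limit on $\mfA$. By \lref{lem:equivalent-condition-vague-convergence}(ii), $\mu_{n_k}(A) \to \mu(A)$ for every $A \in \bdd_\mu$. For $A \in \mfA \cap \bdd_\mu$, this subsequential limit must coincide with the full-sequence limit $\alpha(A) := \lim_n \mu_n(A)$ given by hypothesis. Therefore
\[
\mu_n(A) \to \alpha(A) = \mu(A) \quad \text{for every } A \in \mfA \cap \bdd_\mu.
\]
Finally, applying the defining property of a convergence-determining class to the full sequence $\{\mu_n\}$ against the limit $\mu$ yields $\mu_n \vto \mu$, completing the proof.

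The steps are essentially bookkeeping once one notices that the convergence-determining property is exactly what bridges the gap between ``the subsequential limit agrees with $\alpha$ on $\mfA \cap \bdd_\mu$'' and ``the whole sequence converges vaguely.'' The only subtle point to handle carefully is that the convergence-determining property restricts attention to $\mfA$-sets that are $\mu$-continuity sets, and one must avoid trying to equate $\mu$ and $\alpha$ on all of $\mfA$; restricting to $\mfA \cap \bdd_\mu$ is both sufficient and consistent with what the subsequence delivers.
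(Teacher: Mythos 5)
Your proof is correct and follows essentially the same route as the paper's: establish relative compactness via the finite covering property and Lemma~\ref{lem:tightness}, extract a vaguely convergent subsequence, identify its limit with the prescribed limits on $\mfA \cap \bdd_\mu$, and invoke the convergence-determining property for the full sequence. Your explicit remark that one should only equate the limits on $\mfA \cap \bdd_\mu$ rather than all of $\mfA$ is exactly the point the paper's definition of convergence-determining class is designed to accommodate.
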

\begin{proof}
For any relatively compact set $B \in \bdd$, we can find a finite cover $\{A_i\}_{i=1}^m \subset
 \mfA$ of $B$ so that 
\[
	\limsup_{n \to \infty} \mu_n (B) 
\le \limsup_{n \to \infty} \mu_n(\cup_{i = 1}^m A_i) 
 \le \lim_{n \to \infty} \sum_{i=1}^m \mu_n(A_i) < \infty.
\]
Therefore the sequence $\{\mu_n\}_{n \ge 1}$ is relatively compact by \lref{lem:tightness}, and hence, there is a subsequence $\{\mu_{n_k}\}$ and
 $\mu \in \rM$ such that $\mu_{n_k} \vto \mu$, i.e., 
$\mu_{n_k}(A) \to \mu(A)$ for every $A \in \bdd_{\mu}$. This together
 with the assumption implies 
that $\mu_n(A) \to \mu(A)$ for every $A \in \mfA \cap \bdd_{\mu}$. 
Consequently, $\mu_n$ converges to $\mu$ vaguely from the definition of 
convergence-determining class. The proof is complete.
\end{proof}

\begin{prop}\label{prop:LLN}
 Let $\mfA$ be a convergence-determining class 
with finite covering property and the property that for every $\mu \in \rM$, it contains a \textit{countable} convergence-determining class for $\mu$. Let $\{\xi_n\}$ be a sequence of random measures on $S$, i.e., 
a sequence of $\rM$-valued random variables. 
Assume that for every $A \in \mfA$, there exists $c_A \in [0, \infty)$ such that 
$\E[\xi_n(A)] \to c_A$ as $n \to \infty$. 
Then, there exists a unique measure $\mu\in \rM$ such that the mean measure $\E[\xi_n]$ converges vaguely to $\mu$ and $\mu(A) = c_A$ for $ A \in \mfA \cap \bdd_{\mu}$.

Assume further that for every $A\in\mfA$,
\[
	\xi_n(A) \to c_A  \text{ almost surely as } n \to \infty.  
\]
Then $\{\xi_n\}$ converges vaguely to $\mu$ almost surely.
\end{prop}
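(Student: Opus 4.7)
The proposition is essentially a random-measure version of \lref{lem:existence-vague-limit}, combined with a standard countable-exhaustion trick for promoting pointwise a.s.\ convergence on a class of test sets to a.s.\ vague convergence of the measures. I would handle the two assertions separately, using Part 1 to supply the candidate limit $\mu$ that Part 2 then targets.

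For the first assertion, set $\mu_n := \E[\xi_n]$. Since $\xi_n$ takes values in $\rM$, each $\mu_n$ is a (possibly non-Radon) Borel measure on $S$ by Fubini/monotone convergence. The hypothesis gives $\mu_n(A) = \E[\xi_n(A)] \to c_A < \infty$ for each $A \in \mfA$, so in particular $\mu_n(A) < \infty$ for all sufficiently large $n$. Combined with the finite covering property of $\mfA$, this implies that for every $B \in \bdd$ one has $\mu_n(B) < \infty$ once $n$ is large enough, hence $\mu_n \in \rM$ for all large $n$. \lref{lem:existence-vague-limit}, applied to this tail of the sequence, then produces a Radon measure $\mu \in \rM$ with $\mu_n \vto \mu$, and \lref{lem:equivalent-condition-vague-convergence} gives $\mu(A) = \lim_n \mu_n(A) = c_A$ for every $A \in \mfA \cap \bdd_\mu$. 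Uniqueness of $\mu$ follows because $\mfA$ is convergence-determining, so any two vague limits must agree on a convergence-determining subclass and hence coincide.

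For the second assertion, invoke the hypothesis that $\mfA$ contains a countable convergence-determining class $\mfA_0$ for the specific $\mu$ produced above; by definition $\mfA_0 \subset \mfA \cap \bdd_\mu$, and by Part~1 we have $c_A = \mu(A)$ for every $A \in \mfA_0$. For each $A \in \mfA_0$ there is a $\PP$-null event $N_A$ off which $\xi_n(A) \to c_A$. Setting $N := \bigcup_{A \in \mfA_0} N_A$, countability of $\mfA_0$ makes $N$ null, and for every $\omega \notin N$,
\[
  \xi_n(\omega)(A) \longrightarrow \mu(A) \quad \text{for all } A \in \mfA_0
\]
simultaneously. Because $\mfA_0$ is a convergence-determining class for $\mu$, this forces $\xi_n(\omega) \vto \mu$ for all such $\omega$, proving the almost-sure vague convergence.

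The only genuine subtlety is the countable-exhaustion step in Part 2: a.s.\ convergence of $\xi_n(A)$ for each individual $A \in \mfA$ does not, a priori, give simultaneous convergence across uncountable families. This is precisely the reason the hypothesis requires $\mfA$ to house a \emph{countable} convergence-determining subclass for every $\mu \in \rM$; that assumption is tailor-made to convert the per-test-set a.s.\ convergence into vague convergence of paths. The first-part argument is routine modulo the minor bookkeeping needed to ensure $\E[\xi_n]$ is genuinely Radon so that \lref{lem:existence-vague-limit} applies.
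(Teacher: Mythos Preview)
Your proof is correct and follows essentially the same route as the paper: apply \lref{lem:existence-vague-limit} to the mean measures to obtain the vague limit $\mu$, then use the countable convergence-determining subclass $\mfA_0 \subset \mfA \cap \bdd_\mu$ together with a countable union of null sets to upgrade per-set a.s.\ convergence to a.s.\ vague convergence. The only cosmetic difference is that the paper simply assumes $\E[\xi_n(A)] < \infty$ for all $n$ rather than passing to a tail, and it leaves the countable-union-of-null-sets step implicit.
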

\begin{proof}
Note that we implicitly assume that $\E[\xi_n(A)] < \infty$ for all $A \in \cA$ and all $n$. Then it follows from the finite covering property that the mean measure $\E[ \xi_n]$ exists for all $n$. By Lemma~\ref{lem:existence-vague-limit}, there exists a unique measure
$\mu$ such that $\E[ \xi_n]$ converges vaguely to $\mu$ as $n \to \infty$, and hence $\mu(A) = c_A$ for $ A \in \mfA \cap \bdd_{\mu}$.  

Now let $\mfA_\mu \subset \mfA$ be a countable convergence-determining class for $\mu$. Then, almost surely
\[
	\xi_{n}(A) \to \mu(A) \text{ as } n \to \infty, \text{ for all } A \in \mfA_\mu,
\]
which implies that the sequence $\{\xi_n\}$ converges vaguely to $\mu$ almost surely. The proof is complete.
\end{proof}

\section{Convergence of persistence diagrams}

\subsection{Proof of Theorem~{\rm\ref{thm:LLN-Betti}}}\label{sec:LLN-Betti}
Let $\Phi$ be a stationary point process  on $\bR^{N}$ having all finite
moments. 
Let $F_q(\Phi, r)$ be the number of $q$-simplices in $K(\Phi,r)$ 
and $F_q(\Phi, r; A)$ be the number of $q$-simplices in $K(\Phi,r)$ 
with at least one vertex in $A \subset \bR^{N}$.
Recall that every $q$-simplex in $K(\Phi, r)$ containing $x$ must lie in
the closed ball $\bar B_{\rho(r)}(x)$. Therefore similar to
\cite[Lemma~3.1]{ysa}, there exists a constant $C_{q,r}$ such that  
\[
\E[F_q(\Phi_{A},r)] \le \E[F_q(\Phi, r; A)] \le C_{q,r} |A| 
\]
for all bounded Borel sets $A$, where $|A|$ is the Lebesgue measure of $A$.

We divide $\La_{mM}$ into $m^{N}$ rectangles that are congruent to 
 $\La_{M}$ and write as follows
\[
 \La_{mM} = \bigsqcup_{i=1}^{m^{N}} (\La_{M} + c_i), 
\]
where $c_i$ is the center of the $i$th rectangle. 
We compare $\K(\Phi_{\La_{mM}})$ with a smaller filtration 
$\K^{\circ}(\Phi_{\La_{mM}}) := \bigsqcup_{i=1}^{m^{N}}\K(\Phi_{\La_{M}+c_i})$. 

Let $\psi(L) = \E[\beta_q^{r,s}(\K(\Phi_{\La_L}))]$ for $r\leq s$.
By \lref{lem:multi_add}, we have 
\begin{align}\label{eq:betti_estimate}
|\beta_q^{r,s}(\K(\Phi_{\La_{mM}})) 
- \beta_q^{r,s}(\K^{\circ}(\Phi_{\La_{mM}}))| 
&\leq 
\sum_{j=q}^{q+1} 
\sum_{i=1}^{m^{N}}
F_j(\Phi_{(\partial
 \La_M)^{(\rho(s))}+c_i}, s).  
\end{align}
Here, for $A \subset \bR^{N}$,
we write $A^{(r)} = \{x \in \bR^{N} : \inf_{y \in A} \|x-y\| \le r\}$.
Since $\E[F_j(\Phi_{(\partial
 \La_M)^{(\rho(s))}+c_i}, s)] = 
O(|(\partial
 \La_M)^{(\rho(s))}+c_i|) = O(M^{N-1})$ as $M \to \infty$, 
we have 
\begin{equation}
\frac{\psi(mM)}{(mM)^{N}} = \frac{\psi(M)}{M^{N}} + O(M^{-1}). 
\label{esti1}
\end{equation}
Moreover, for $L > L'$, 
\begin{align*}
|\beta_q^{r,s}(\K(\Phi_{\La_{L}})) -
\beta_q^{r,s}(\K(\Phi_{\La_{L'}}))| 
&\leq \sum_{j=q}^{q+1} 
F_j(\Phi_{\La_L}, s; \La_L \setminus \La_{L'}) 
\end{align*}
and 
\[
	\E[F_j(\Phi_{\La_L}, s; \La_L \setminus \La_{L'})] = O(|\La_L
 \setminus \La_{L'}|) = 
 O((L-L')L^{N-1}).
\] 
Then, for fixed $M>0$, taking $m \in \N$ such that 
$mM \le L < (m+1)M$, we see that 
\begin{equation}
\frac{\psi(L)}{L^{N}} = \frac{\psi(mM)}{(mM)^{N}}  + O(ML^{-1}). 
\label{esti2}
\end{equation}
It follows from \eqref{esti1} and \eqref{esti2} 
that $\{L^{-N} \psi(L)\}_{L \ge 1}$ is a Cauchy sequence by 
taking sufficient large $M$ first and then $L$, which completes the first part of the proof.

Let us assume now that $\Phi$ is ergodic. Since the arguments are similar to those in the proof of Theorem~3.5 in \cite{ysa}, we only sketch main ideas.  
By the multi-dimensional ergodic theorem, we see that almost surely as $m \to \infty$,
\[
\frac{1}{m^{N}} \beta_{q}^{r,s}(\K^{\circ}(\Phi_{\La_{mM}})) 
= \frac{1}{m^{N}}\sum_{i=1}^{m^{N}}
 \beta_{q}^{r,s}(\K(\Phi_{\La_{M}+c_i}))
\to \E[\beta_{q}^{r,s}(\K(\Phi_{\La_{M}}))] ,
\]
and for $j = q, q+1$,
\[
\frac{1}{m^{N}} \sum_{i=1}^{m^{N}} 
F_j(\Phi_{(\partial \La_M)^{(\rho(s))}+c_i}, s)
\to  \E[F_j(\Phi_{(\partial\La_{M})^{(\rho(s))}},s)] = O(M^{N-1}). 
\]
Remark here that the above equations hold for all except a countable set of $M$ (cf.~\cite[Theorem~1]{ps}).
Therefore, it follows from \eqref{eq:betti_estimate} that
\[
\limsup_{m \to \infty} \frac{\pm 1}{(mM)^{N}}\beta_{q}^{r,s}(\K(\Phi_{\La_{mM}})) 
\le \frac{\pm 1}{M^{N}} 
\E[\beta_{q}^{r,s}(\K(\Phi_{\La_{M}}))] + O(M^{-1}).
\]
The rest of the proof is similar to the last step in the first part by noting that the following laws of large numbers for $F_j(\Phi_{\La_L}, s), j = q, q+1,$ hold (cf.~\cite[Lemma~3.2]{ysa}), 
\[
	 \frac{F_j(\Phi_{\La_L}, s)}{L^j} \to \hat F_j(s) \text{ almost surely as $L \to \infty$.}
\]
This completes the second part of the proof.\qed

\begin{cor}\label{cor:conv}
 Let $\Phi$ be a stationary point process on $\bR^{N}$ having all finite moments, and $\xi_{q,L}$ be the point process on $\Delta$ 
 corresponding to the $q$th persistence diagram for
 $\K(\Phi_{\La_L})$. Then, for every rectangle of the form $R = (r_1,r_2] \times (s_1,s_2], [0, r_1] \times (s_1, s_2] \subset \Delta$, there exists a constant $C_R\in [0, \infty)$ such that
 \[
\frac{1}{L^{N}} \E[\xi_{q,L}(R)] \to C_R \text{ as }L \to \infty.
\]  
In addition, if $\Phi$ is ergodic, then
\[
\frac{1}{L^{N}} \xi_{q,L}(R) \to C_R \text{ almost surely as }L \to \infty.
\]  
\end{cor}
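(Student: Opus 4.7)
The plan is to express the measure of each rectangle as a finite signed sum of persistent Betti numbers, so that Theorem~\ref{thm:LLN-Betti} can be applied termwise.

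First, I would recall the identity
\[
\beta_q^{r,s}(\K(\Phi_{\La_L})) = \xi_{q,L}([0,r]\times (s,\infty]),
\]
established before Theorem~\ref{thm:LLN-Betti}. Using this and the additivity of $\xi_{q,L}$ on disjoint sets, for a rectangle $[0,r_1]\times (s_1,s_2]\subset \Delta$ I would write
\[
\xi_{q,L}([0,r_1]\times (s_1,s_2])
= \beta_q^{r_1,s_1}(\K(\Phi_{\La_L}))-\beta_q^{r_1,s_2}(\K(\Phi_{\La_L})),
\]
which is well defined and nonnegative since $s_1\le s_2$. For a general rectangle $(r_1,r_2]\times (s_1,s_2]\subset \Delta$, I would decompose
\begin{align*}
\xi_{q,L}((r_1,r_2]\times (s_1,s_2])
&= \xi_{q,L}([0,r_2]\times (s_1,s_2]) - \xi_{q,L}([0,r_1]\times (s_1,s_2]) \\
&= \beta_q^{r_2,s_1}(\K(\Phi_{\La_L})) - \beta_q^{r_2,s_2}(\K(\Phi_{\La_L})) \\
&\quad - \beta_q^{r_1,s_1}(\K(\Phi_{\La_L})) + \beta_q^{r_1,s_2}(\K(\Phi_{\La_L})).
\end{align*}

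Next, I would apply Theorem~\ref{thm:LLN-Betti} to each of the (at most four) persistent Betti numbers appearing above. This yields the existence of constants $\hat\beta_q^{r_i,s_j}$ such that $L^{-N}\E[\beta_q^{r_i,s_j}(\K(\Phi_{\La_L}))] \to \hat\beta_q^{r_i,s_j}$ as $L\to\infty$. By linearity of expectation and linearity of limits, $L^{-N}\E[\xi_{q,L}(R)]$ then converges to a constant $C_R\in[0,\infty)$, defined explicitly as the corresponding signed sum of $\hat\beta_q^{r_i,s_j}$ (nonnegativity of $C_R$ is forced by the fact that $\xi_{q,L}(R)\ge 0$ for all $L$).

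Finally, under the ergodicity assumption, Theorem~\ref{thm:LLN-Betti} gives the analogous almost sure convergence $L^{-N}\beta_q^{r_i,s_j}(\K(\Phi_{\La_L}))\to \hat\beta_q^{r_i,s_j}$ for each of the finitely many pairs $(r_i,s_j)$. Taking the intersection of the corresponding full-measure events and invoking the same finite linear combination yields $L^{-N}\xi_{q,L}(R)\to C_R$ almost surely. There is no serious obstacle here: the whole argument is a bookkeeping exercise once Theorem~\ref{thm:LLN-Betti} is available, and the only subtlety is choosing the correct half-open rectangle decomposition so that the cancellations of persistent Betti numbers reproduce $\xi_{q,L}(R)$ exactly.
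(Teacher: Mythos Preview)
Your proposal is correct and follows essentially the same approach as the paper: both express $\xi_{q,L}(R)$ as a finite signed sum of persistent Betti numbers via the identity $\beta_q^{r,s}(\K(\Phi_{\La_L}))=\xi_{q,L}([0,r]\times(s,\infty])$, and then apply Theorem~\ref{thm:LLN-Betti} termwise. The paper's proof is terser but otherwise identical, including the same four-term inclusion--exclusion for $(r_1,r_2]\times(s_1,s_2]$ and two-term difference for $[0,r_1]\times(s_1,s_2]$.
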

\begin{proof}
It is a direct consequence of Theorem~\ref{thm:LLN-Betti} because for $R = (r_1,r_2] \times (s_1,s_2]$,
\begin{align*}
	&\xi_{q,L}(R) \\&= \beta_q^{r_2,s_1}(\K(\Phi_{\Lambda_L}))
	-\beta_q^{r_2,s_2}(\K(\Phi_{\Lambda_L}))
	+\beta_q^{r_1,s_2}(\K(\Phi_{\Lambda_L}))
	-\beta_q^{r_1,s_1}(\K(\Phi_{\Lambda_L})),
\end{align*}
and for $R = [0, r_1] \times (s_1, s_2]$,
\[
	\xi_{q,L}(R)=\beta_q^{r_1,s_1}(\K(\Phi_{\Lambda_L}))
	-\beta_q^{r_1,s_2}(\K(\Phi_{\Lambda_L})). \qedhere
\]
\end{proof}

\subsection{Proof of Theorem~{\rm\ref{thm:LLN-PD}}}\label{sec:LLN-PD}
Let $S = \Delta = \{(x,y) \in \Rbar^2 : 0 \leq x < y \leq \infty\}$.
Set 
\[
\mfA = \{(r_1, r_2] \times (s_1, s_2], [0,r_1] \times (s_1, s_2] \subset \Delta : \
0 \leq r_1 < r_2 \le s_1 < s_2 \le \infty\}.
\]
We will show in \cref{cor:condition-for-A} that $\mfA$ is a convergence-determining class which satisfies the condition in \pref{prop:LLN}. Theorem \ref{thm:LLN-PD} then follows from \pref{prop:LLN} and \cref{cor:conv}. \qed

\begin{defn}
We call the limiting Radon measure $\nu_{q} \in \rM(\Delta)$ in Theorem~\ref{thm:LLN-PD} the $q$th persistence diagram for a stationary ergodic 
point process $\Phi$ on $\bR^{N}$. 
\end{defn}

 \begin{ex}\label{ex:shifted_lattice}
Let $\Phi$ be a randomly shifted $\bZ^{N}$-lattice with intensity $1$,
i.e., $\Phi = \bZ^{N} + U$, where $U$ is a uniform random variable on the
unit cube $[0,1]^{N}$. Then, $\Phi$ is a stationary ergodic point process in $\bR^{N}$. We compute the limiting persistence diagram $\nu_q$ of the
  {\v C}ech filtration $\C(\Phi)=\{C(\Phi,r)\}_{r\geq 0}$ for
  $q=1,2,\dots, N-1$. 

For this purpose, we introduce a filtration $\bar\C(L)=\{\bar C(L,r)\}_{r\geq0}$ of cubical complexes by
\[
	\bar C(L,r)=\left\{\begin{array}{ll}
		\text{CL}(L,q),&\frac{\sqrt{q}}{2}\leq r < \frac{\sqrt{q+1}}{2},\\
		\text{CL}(L,N),&r\geq\frac{\sqrt{N}}{2},
	\end{array}\right.
\] 
where $\text{CL}(L,N)$ is the cubical complex consisting of all the elementary cubes
in $[0,L]\times\dots\times[0,L]\subset \bR^{N}$, and $\text{CL}(L,q)$ is the $q$-dimensional skeleton of $\text{CL}(L,N)$.
Here a cube $Q=I_1\times\dots\times I_{N}\subset \bR^{N}$ consisting
  of $I_k=[a,a]$ or $I_k=[a,a+1]$ for some $a\in \bZ$ is called an
  elementary cube \cite{kmm}. 
From the stationarity of $\Phi$ and the homotopy equivalence between $\bar C(L,r)$ and $C(\bZ^{N}\cap{[0,L]^{N}},r)$, it suffices to compute the persistence diagram by using  the filtration $\bar\C(L)$.
We also note that $(\sqrt{q}/2,\sqrt{q+1}/2)$ is the only birth-death pair for the $q$th persistence diagram. Therefore, all we need to verify is the multiplicity of that pair with respect to $L$. 

The Euler-Poincar\'{e} formula for $X=\text{CL}(L,q)$ is given by
\begin{align}\label{eq:euler}
	\sum_{k=0}^q(-1)^k|X_k|=\sum_{k=0}^q(-1)^k\beta_k(X).
\end{align}
The number $|X_k|$ of $k$-cells in $X$ is given by (see, e.g., \cite{hs_tutte})
\[
	|X_k|=\sum_{p=k}^{N}\binom{p}{k}S_p(L),
\]
where $S_p(x_1,\dots,x_{N})$ is the elementary symmetric polynomial of degree $p$ and $S_p(L)$ is an abbreviation for $S_p(L,\dots,L)$. 
On the other hand, since $X$ is homotopy equivalent to a wedge sum of $q$-spheres, we have $\beta_0=1$ and $\beta_k=0$ for $k=1,\dots,q-1$. Then, it follows from (\ref{eq:euler}) that 
\[
\beta_q(X)=\sum_{k=0}^q(-1)^{k+q}\sum_{p=k}^{N}\binom{p}{k}S_p(L)+(-1)^{q+1},
\]
and hence
\[
\frac{\beta_q(X)}{L^{N}}=\sum_{k=0}^q(-1)^{k+q}\binom{N}{k}+O(L^{-1})
  =\binom{N-1}{q}+O(L^{-1}).
\]
Therefore, the limiting persistence diagram is given by 
\[
\nu_{q} = \binom{N-1}{q}\delta_{(\sqrt{q}/2,\sqrt{q+1}/2)}.
\]
 \end{ex}

\subsection{The support of $\nu_q$}
\label{sec:positivity}

In this section, we give some sufficient conditions both on $\kappa$ and
$\Phi$ to ensure the positivity of the limiting measure $\nu_q$.
We use the following stability result on persistence diagrams of
$\kappa$-filtrations (cf.~\cite{cdo, ceh}). 
Here, the persistence diagram of the $\kappa$-filtration on
$\Xi\in\finite(\bR^{N})$ is simply denoted by $D_q(\Xi)$.  
\begin{lem}\label{lem:continuity}
 Assume that $\kappa$ is Lipschitz continuous with respect to the Hausdorff distance, i.e., there exists a constant $c_\kappa$ such that
\[
|\kappa(\sigma)-\kappa(\sigma')|\leq c_\kappa d_H(\sigma,\sigma')
\] 
for $\sigma,\sigma'\in\finite(\bR^{N})$. 
Then, for $\Xi,\Xi'\in\finite(\bR^{N})$,
\[
	d_B(D_q(\Xi), D_q(\Xi')) \le c_\kappa d_H(\Xi, \Xi'). 
\]
Here $d_B$ and $d_H$ denote the bottleneck distance and the Hausdorff
 distance, respectively.  
\end{lem}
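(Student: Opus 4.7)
Let $\eta = d_H(\Xi,\Xi')$ and $\eps = c_\kappa \eta$. The plan is to exhibit an $\eps$-interleaving between the persistence modules $H_q(\K(\Xi))$ and $H_q(\K(\Xi'))$ at the simplicial level, and then invoke the algebraic stability theorem of \cite{cdo, ceh} to conclude $d_B(D_q(\Xi), D_q(\Xi')) \le \eps$.

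First I would choose vertex maps $\phi \colon \Xi \to \Xi'$ and $\psi \colon \Xi' \to \Xi$ with $\|x - \phi(x)\| \le \eta$ and $\|y - \psi(y)\| \le \eta$ for every $x \in \Xi$ and $y \in \Xi'$; such maps exist because $\Xi$ and $\Xi'$ are finite and $d_H(\Xi,\Xi') \le \eta$. The key observation is that for any simplex $\sigma = \{x_0,\dots,x_k\} \in K(\Xi,t)$, the image $\phi(\sigma) = \{\phi(x_0),\dots,\phi(x_k)\}$ satisfies $d_H(\sigma,\phi(\sigma)) \le \eta$ (checking both one-sided Hausdorff distances using $\|x_i - \phi(x_i)\| \le \eta$ and the fact that $x_i \in \sigma$). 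By the Lipschitz hypothesis this yields $\kappa(\phi(\sigma)) \le \kappa(\sigma) + c_\kappa \eta \le t + \eps$, so $\phi$ induces a simplicial map $\phi_t \colon K(\Xi,t) \to K(\Xi',t+\eps)$, and symmetrically for $\psi_t \colon K(\Xi',t) \to K(\Xi,t+\eps)$. The case where $\phi$ collapses distinct vertices is harmless: the image is then a lower-dimensional simplex, but the same Hausdorff estimate still applies.

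Next I would verify the interleaving identities on homology. For any $\sigma \in K(\Xi,t)$, each vertex $\psi(\phi(x_i))$ of $(\psi\circ\phi)(\sigma)$ lies within $2\eta$ of $x_i \in \sigma$, so $d_H\bigl(\sigma,\, \sigma \cup (\psi\circ\phi)(\sigma)\bigr) \le 2\eta$, whence $\kappa\bigl(\sigma \cup (\psi\circ\phi)(\sigma)\bigr) \le t + 2\eps$ and $\sigma \cup (\psi\circ\phi)(\sigma) \in K(\Xi,t+2\eps)$. Thus $\psi_{t+\eps} \circ \phi_t$ is contiguous to the inclusion $K(\Xi,t) \hookrightarrow K(\Xi,t+2\eps)$; since contiguous simplicial maps induce the same map on homology, $H_q(\psi_{t+\eps}\circ\phi_t)$ equals the internal structure map of the persistence module $H_q(\K(\Xi))$ from parameter $t$ to $t+2\eps$. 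The symmetric identity for $\phi_{t+\eps} \circ \psi_t$ is proved identically. This is precisely an $\eps$-interleaving, and algebraic stability yields $d_B(D_q(\Xi),D_q(\Xi')) \le \eps = c_\kappa d_H(\Xi,\Xi')$.

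The main obstacle I anticipate is the bookkeeping when $\phi$ or $\psi$ is not injective on some simplex, so that the image drops in dimension. This is absorbed by the standard convention that a simplicial map may collapse a simplex to one of lower dimension, and the Hausdorff-distance estimates above remain valid for the image as a set. A minor side issue is ensuring the hypotheses of the algebraic stability theorem apply: both persistence modules are tame because $K(\Xi,t)$ and $K(\Xi',t)$ have finitely many simplices, so there are only finitely many critical values.
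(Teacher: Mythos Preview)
Your proof is correct. Both your argument and the paper's (carried out in Appendix~C) follow the same overall strategy---build an $\eps$-interleaving with $\eps = c_\kappa d_H(\Xi,\Xi')$ at the simplicial level and then invoke algebraic stability---but the packaging differs. The paper works through the \emph{correspondence} framework of \cite{cdo}: it takes the multi-valued relation $C = \{(x,x') \in \Xi \times \Xi' : \|x-x'\| \le d_H(\Xi,\Xi')\}$, bounds its distortion by $c_\kappa d_H(\Xi,\Xi')$ using the Lipschitz hypothesis, and then quotes Proposition~4.2 of \cite{cdo} (correspondences with small distortion are $\eps$-simplicial) together with the interleaving theorem of \cite{cdgo}. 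You instead choose single-valued vertex maps $\phi,\psi$ and verify the interleaving triangles by hand via contiguity. Your route is more self-contained and elementary, requiring no correspondence machinery; the paper's route is slightly more general (it treats two different filtration functions $\kappa,\kappa'$ at once) and avoids arbitrary choices of $\phi,\psi$. Substantively the two arguments are the same.
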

See Appendix C for the detail, where we recall definitions of $d_B$ and
$d_H$ and give a proof of a generalization
of this lemma.  

Next we introduce the notion of \textit{marker} which is
a finite point configuration for finding a specified point in $\Delta$. 

 \begin{defn}
  Let $\La$ be a bounded Borel set in $\bR^{N}$ and $(b,d) \in \Delta$. 
  We say that $\fp \in \finite(\bR^{N})$ is a \textit{$(b,d)$-marker in
  $\La$}  
 for the $q$th persistent homology ($\phq$) if (i) $\fp \subset \La$ and
 (ii) for any $\Phi \in \finite(\bR^{N})$ 
 \begin{equation}
 \xi_q(\Phi_{\La^c} \sqcup \fp)(\{(b,d)\}) \ge 
 \xi_q(\Phi_{\La^c})(\{(b,d)\}) + 1.  
\label{eq:marker}  
 \end{equation}
 Here $\La^c$ denotes the complement of $\La$ in $\R^N$.
  For a subset $A \subset \Delta$, we also say that $\fp$ is an $A$-marker
  in $\La$
 if there exists $(b,d) \in A$ such
 that $\fp$ is a $(b,d)$-marker in $\La$. 
 \end{defn}

 \begin{ex} \label{ex:marker}
 \begin{itemize}
 \item[(i)]	Assume that a point	 $(b,d) \in \Delta$ is realizable by $\fp \in
  \finite(\bR^{N})$. 
  Then, there exists $M_0 >0$ such that 
  $\fp$ is a $(b,d)$-marker in $\La_M$ for any $M \ge M_0$
  because $\fp$ is enough isolated from $\La_M^c$ for sufficiently large
  $M$. 
  \item[(ii)] We note that $(1/2, \sqrt{2}/2) \in \Delta$
   is realized in $\mathrm{PH}_1$
   by 
   \[
   	\{(0,0), (1,0), (0,1), (1,1)\} \in \finite(\bR^2)
\] in
   the \cech or Rips filtration.  
   It is easy to see that for each $c \in \La_1$ 
   $\fp_M = \sum_{x \in \Z^2 \cap \La_M} \delta_{x+c}$ is a $(1/2,
   \sqrt{2}/2)$-marker in $\La_{M}$ for any sufficiently large $M$, for example, $M = 3$.
\end{itemize}
\end{ex} 

\begin{thm}\label{thm:support}
 Let 
 \[
A_{q,\eps,(b,d)}
 := \bigcup_{M=1}^{\infty} \{
 \text{$\Phi_{\La_{M}}$ is a $B_{\eps}((b,d))$-marker in $\La_{M}$
 for $\phq$}\} 
 \]
 and
\[
S_{q, \eps} := \{(b,d) \in \Delta : 
\PP(A_{q,\eps,(b,d)})>0 \}, \quad 
 S_q := \bigcap_{\eps > 0} S_{q, \eps}. 
\]
 Then, $S_q \subset \support \nu_q$. 
\end{thm}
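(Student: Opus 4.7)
The plan is to show that for each $(b,d) \in S_q$ and each open neighborhood $G$ of $(b,d)$, $\nu_q(G) > 0$. Since we need a \emph{lower} bound on the limit measure $\nu_q$, the natural route under vague convergence is via closed sets: by \lref{lem:equivalent-condition-vague-convergence}(iii), if $F \subset G$ is closed and relatively compact in $\Delta$, then $\nu_q(F) \ge \limsup_L L^{-N}\, \E[\xi_{q,L}(F)]$. So it suffices to produce such an $F$ together with a positive lower bound on $L^{-N}\E[\xi_{q,L}(F)]$ along some subsequence of $L$.

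Choose $\eps > 0$ small enough that $F := \overline{B_\eps((b,d))} \subset G$ and $F$ is compact in $\Delta$. Since $(b,d) \in S_{q,\eps}$, $\PP(A_{q,\eps,(b,d)}) > 0$, and by countable additivity there exists $M_0 \in \N$ with
\[
p := \PP\bigl(\Phi_{\La_{M_0}} \text{ is a } B_\eps((b,d))\text{-marker in } \La_{M_0} \text{ for } \phq\bigr) > 0.
\]
For $L = m M_0$, partition $\La_L = \bigsqcup_{i=1}^{m^{N}} (\La_{M_0} + c_i)$, and let $I = I(\omega)$ be the random set of indices $i$ for which $\Phi_{\La_{M_0}+c_i} - c_i$ is a $B_\eps((b,d))$-marker in $\La_{M_0}$. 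By translation invariance of $\kappa$ and stationarity of $\Phi$, each $i$ belongs to $I$ with probability $p$, so $\E[|I|] = p m^{N}$.

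The key step is the deterministic bound $\xi_{q,L}(B_\eps((b,d)))(\omega) \ge |I(\omega)|$. For each $i \in I$, pick $(b_i,d_i) \in B_\eps((b,d))$ so that $\Phi_{\La_{M_0}+c_i}$ is a $(b_i,d_i)$-marker in $\La_{M_0}+c_i$. Fix a distinct target $(b^*,d^*)$ occurring with multiplicity $n^*$ in $\{(b_i,d_i) : i \in I\}$, and enumerate the cells of $I$ so that the $n^*$ cells targeting $(b^*,d^*)$ come last. Build up $\Phi_{\La_L}$ by starting from $\Psi_0 := \bigcup_{i \notin I} \Phi_{\La_{M_0}+c_i}$ and adding the marker cells of $I$ in this order. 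At each step, the partial configuration $\Psi_{l-1}$ has no points in the cell $\La_{M_0}+c_{j_l}$ to be added next, so \eqref{eq:marker} applies with $\Phi = \Psi_{l-1}$ and yields $\xi_q(\Psi_l)(\{(b^*,d^*)\}) \ge \xi_q(\Psi_{l-1})(\{(b^*,d^*)\}) + 1$ for each of the final $n^*$ steps. Iterating gives $\xi_{q,L}(\{(b^*,d^*)\}) \ge n^*$; summing over the finitely many distinct targets yields $\xi_{q,L}(B_\eps((b,d))) \ge |I|$.

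Taking expectations, $L^{-N}\, \E[\xi_{q,L}(F)] \ge L^{-N}\, \E[\xi_{q,L}(B_\eps((b,d)))] \ge p\, M_0^{-N}$ along the subsequence $L = m M_0$, so $\nu_q(G) \ge \nu_q(F) \ge p\, M_0^{-N} > 0$. The main obstacle is the iterated marker step above: since \eqref{eq:marker} only controls the mass at a single point per marker, a naive aggregation across markers with different target values is vulnerable to mass cancellation elsewhere in $B_\eps((b,d))$; the trick is to argue one distinct target value at a time, reordering the markers so that those sharing that target come last, in which case the single-target marker bounds simply add up to $|I|$.
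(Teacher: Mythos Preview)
Your proof is correct and follows essentially the same approach as the paper: both partition $\La_L$ into translates of $\La_M$, group the marker cells by their distinct target points $(b^*,d^*) \in B_\eps((b,d))$, iterate the marker inequality \eqref{eq:marker} to obtain $\xi_{q,L}(\{(b^*,d^*)\}) \ge n^*$ for each such target, sum to get $\xi_{q,L}(B_\eps((b,d))) \ge |I|$, and then invoke the $\limsup$ direction of vague convergence on the closed set $\overline{B_\eps((b,d))}$. The only cosmetic difference is that the paper phrases the iteration as successively \emph{removing} marker cells from $\Phi_{\La_L}$ (so the remaining configuration plays the role of $\Phi_{\La^c}$ in \eqref{eq:marker}), whereas you phrase it as successively \emph{adding} them; the arguments are equivalent.
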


Before proving Theorem~\ref{thm:support}, we give a lower bound for
$\nu_q$. 
\begin{lem} For a closed set $A \subset \Delta$, 
\begin{equation}
 \nu_q(A)
  \ge \frac{1}{M^{N}} \PP(\text{$\Phi_{\La_M}$ is an $A$-marker in $\La_M$
  for $\phq$}). 
\label{eq:nuqa} 
\end{equation}
\end{lem}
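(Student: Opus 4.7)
The plan is to tile a large window $\La_{mM}$ by $m^N$ translates of $\La_M$, count the sub-windows whose content is an $A$-marker, and transfer the resulting lower bound to $\nu_q$ via the vague convergence of Theorem~\ref{thm:LLN-PD}.

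First, for $L = mM$, I would partition $\La_L = \bigsqcup_{i=1}^{m^N} (\La_M + c_i)$ and let $E_i$ be the event that $\Phi_{\La_M + c_i}$ is an $A$-marker in $\La_M + c_i$ for $\phq$. Translation invariance of $\kappa$ (axiom (K2)) makes the marker property translation-equivariant, so stationarity of $\Phi$ gives $\PP(E_i) = p_M$ for every $i$, where $p_M$ is the probability in~\eqref{eq:nuqa}.

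The main geometric step is the pointwise bound
\begin{equation*}
\xi_q(\Phi_{\La_L})(A) \;\ge\; N_{m,M} \;:=\; \sum_{i=1}^{m^N} \one_{E_i}.
\end{equation*}
For each $i$ with $E_i$, choose a witness $(b_i, d_i) \in A$ such that $\Phi_{\La_M + c_i}$ is a $(b_i, d_i)$-marker, and set $J_{(b,d)} := \{i : E_i \text{ and } (b_i, d_i) = (b,d)\}$ with cardinality $k_{(b,d)}$. Listing $J_{(b,d)} = \{i_1, \ldots, i_{k_{(b,d)}}\}$ and letting $\Phi^{(j)} := \Phi_{\La_L \setminus (\La_M + c_{i_1}) \cup \cdots \cup (\La_M + c_{i_j})}$, the pairwise disjointness of the tiles gives $\Phi^{(j-1)} = \Phi^{(j)} \sqcup \Phi_{\La_M + c_{i_j}}$, so iterating the marker inequality~\eqref{eq:marker} yields $\xi_q(\Phi_{\La_L})(\{(b,d)\}) \ge k_{(b,d)}$. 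Summing over $(b,d) \in A$, and noting that each $i$ with $E_i$ is counted exactly once on the right, gives the claimed bound.

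Taking expectations, $L^{-N} \E[\xi_{q,L}(A)] \ge M^{-N} p_M$ along the subsequence $L = mM$. For closed, relatively compact $A$, Lemma~\ref{lem:equivalent-condition-vague-convergence}(iii) applied to the vague limit of Theorem~\ref{thm:LLN-PD} gives $\nu_q(A) \ge \limsup_{L \to \infty} L^{-N} \E[\xi_{q,L}(A)] \ge M^{-N} p_M$. For a general closed $A \subset \Delta$, I would approximate from below by $A \cap K_n$ for compact $K_n \uparrow \Delta$; both sides are continuous from below under this approximation, since any realized birth-death pair witnessing an $A$-marker eventually lies in some $K_n$, so $p_M(A \cap K_n) \uparrow p_M(A)$ by monotone convergence, while $\nu_q(A \cap K_n) \uparrow \nu_q(A)$ by the continuity of the Radon measure $\nu_q$.

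The main obstacle is the geometric step: the marker inequality is stated for a single fixed $(b,d)$ and a single ambient configuration, yet we need an aggregate pointwise bound on $\xi_q(\Phi_{\La_L})(A)$ involving markers across many tiles for possibly different birth-death points. The key that makes the iteration go through cleanly is precisely the disjointness of the tiles $\La_M + c_i$, which ensures that the configuration restricted to each tile is unchanged when we successively strip away other tiles, so the marker property of each sub-window can be invoked against the residual environment.
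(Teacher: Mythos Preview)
Your proof is correct and follows essentially the same route as the paper: tile $\La_{mM}$ by translates of $\La_M$, group the successful tiles by the witnessed birth-death pair, iterate the marker inequality using disjointness of the tiles to get the pointwise lower bound $\xi_q(\Phi_{\La_L})(A) \ge \sum_i \one_{E_i}$, take expectations, and pass to the vague limit via Lemma~\ref{lem:equivalent-condition-vague-convergence}(iii). Your additional approximation step $A \cap K_n \uparrow A$ to handle closed sets that are not relatively compact is a refinement the paper omits (it only ever applies the lemma with $A = \bar B_\eps((b,d))$, which is compact), and your justification of both monotone limits is sound.
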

  \begin{proof}
   Let $\La$ be a bounded Borel set in $\bR^{N}$ and $(b,d) \in \Delta$.
   If one could find disjoint subsets
  $\La^{(1)}, \dots, \La^{(k)} \subset \La$ such that
  $\Phi_{\La^{(i)}}$ is a $(b,d)$-marker in $\La^{(i)}$ for each $i$,
   then 
 \begin{equation}
  \xi_q(\Phi_{\La})(\{b,d\}) \ge k.
   \label{eq:lowerbound0}
 \end{equation}
Indeed, by using \eqref{eq:marker} successively, we have
 \[
   \xi_q(\Phi_{\La})(\{(b,d)\}) \ge 
   \xi_q(\Phi_{\La \setminus \cup_{j=1}^k \La^{(j)}})(\{(b,d)\})
  +k \ge k.
 \]
For $L > M >0$ and $m = \lfloor L/M \rfloor$, 
  we claim that 
 \begin{equation}
  \xi_q(\Phi_{\La_L})(A) \ge \sum_{i=1}^{m^{N}} \one_{\{\text{$\Phi_{\La_M + c_i}$ is an $A$-marker in $\La_M + c_i$ for $\phq$}\}},  
\label{eq:lowerbound}  
 \end{equation}
  where $c_i \in \La_L, i=1,2,\dots,m^{N}$ are chosen so that
  $\La_L \supset \sqcup_{i=1}^{m^{N}} (\La_M + c_i)$.
If the right-hand side of \eqref{eq:lowerbound}  
   is equal to $k$, we have disjoint subsets
   $I_j \subset \{c_1,c_2,\dots, c_{m^{N}}\}, j=1,2,\dots, J$ with
   $\sum_{j=1}^J |I_j| = k$ such that
   for every $j=1,2,\dots,J$, $\Phi_{\La_M + c}$ is a $(b_j,d_j)$-marker
  in $\La_M + c$ for $\phq$ 
   for any $c \in I_j$.
   Here $(b_j,d_j) \in A, j=1,2,\dots, J$ are all distinct. 
   From \eqref{eq:lowerbound0}, we have 
  \[
  \xi_q(\Phi_{\La_L})(A) \ge \sum_{j=1}^J \xi_q(\Phi_{\La_L})(\{(b_j, d_j)\}) 
  \ge \sum_{j=1}^J |I_j| = k, 
  \]
   which implies \eqref{eq:lowerbound}. 

   For a closed set $A \subset \Delta$, from \eqref{eq:lowerbound}, we
   obtain 
\begin{align*}
 \nu_q(A) &\ge \limsup_{L \to \infty}
 \frac{1}{L^{N}} \E[\xi_q(\Phi_{\La_L})](A)
 \\
 &\ge  \limsup_{L \to \infty}
 \frac{1}{L^{N}} \sum_{i=1}^{m^{N}} \PP(\text{$\Phi_{\La_M + c_i}$ is an
 $A$-marker in $\La_M + c_i$ for $\phq$})\\
 &= 
 \frac{1}{M^{N}} \PP(\text{$\Phi_{\La_M}$ is an $A$-marker in $\La_M$ for
 $\phq$}). 
\end{align*}
  This completes the proof.
  \end{proof}
 
\begin{proof}[Proof of Theorem~\ref{thm:support}]
 If $(b,d) \in S_q$, then
 for every $\eps > 0$, there exists $M= M_{\eps} \in \N$ such that 
\[
\PP(\text{$\Phi_{\La_M}$ is a $B_{\eps}((b,d))$-marker in
 $\La_M$ for $\phq$})>0. 
\]
From \eqref{eq:nuqa}, we see that $\nu_q({\bar B_{\eps}((b,d))})
 > 0$ for any $\eps>0$, which implies $(b,d) \in \support \nu_q$. 
 Therefore, $S_q \subset \support \nu_q$. 
\end{proof}

For a bounded set $\La \subset \bR^{N}$, the restriction $\rN(\La)$ of
$\rN(\bR^{N})$ on $\La$ can be 
identified with $\bigcup_{k=0}^{\infty} \La^k/ \sim$, where $\sim$ is the equivalence relation induced by permutations on coordinates. 
Let $\Pi$ be the probability distribution of homogeneous Poisson point process
with unit intensity. It is clear that the local densities, which are sometimes
called Janossy densities, of the restriction of $\Pi$ on $\La$ is given by 
\[
\Pi|_{\La} =
\begin{cases}
 \frac{e^{-|\La|}}{k!} dx_1 dx_2 \cdots dx_k & \text{ on $\La^k$}, \\
 e^{-|\La|} & \text{on $\La^0 = \{\emptyset\}$}.  
\end{cases}
\]
For a probability measure $\Theta$ on $\rN(\bR^{N})$, 
if $\Theta|_{\La}$ is absolutely continuous with respect to $\Pi|_{\La}$
for a bounded set $\La$, then $\Theta|_{\La}$ is absolutely continuous
with respect to the Lebesgue measure on each $\La^k$ for every $k$;
thus the Radon-Nikodym density $d\Theta|_{\La} / d\Pi|_{\La}$ is
defined a.e. on $\La^k$ for every $k$.  

\begin{proof}[Proof of Theorem~\ref{thm:positivity0}]
 Assume that $(b,d) \in R_q$ and it is realizable by
 $\{y_1,\dots, y_m\}$. From continuity of persistence diagram in
 Lemma~\ref{lem:continuity}, 
 for any $\eps > 0$ there exists $\delta>0$ such that 
 $\xi_q(\{z_1,\dots,z_m\})(B_{\eps}(\{(b,d)\})) \ge 1$ 
 for any $(z_1,\dots,z_m) \in B_{\delta}(y_1) \times \cdots \times
 B_{\delta}(y_m)$.
 From Example~\ref{ex:marker}(i), there exists $M \in \N$ such that
 any $\{z_1,\dots,z_m\}$ is a $B_{\eps}((b,d))$-marker in
 $\La_M$. Hence, we see that 
 \begin{align*}
\lefteqn{ \PP(\text{$\Phi_{\La_M}$ is a $B_{\eps}((b,d))$-marker in
 $\La_M$ for $\phq$})} \\
 &\ge \Theta|_{\La_M}(\cap_{i=1}^m \{\Phi(B_{\delta}(y_i) = 1)\} \cap
  \{\Phi(\La_M \setminus \cup_{i=1}^m B_{\delta}(y_i))=0\} \\
  &= \frac{e^{-|\La_M|}}{m!} \int_{B_{\delta}(y_1) \times \cdots \times
  B_{\delta}(y_m)} f_{\La_M}(z_1,\dots,z_m) dz_1 \cdots dz_m \\
  &> 0, 
 \end{align*}
 where $f_{\La_M} = d\Theta|_{\La_M} / d\Pi|_{\La_M}$. 
 Hence, $R_q \subset S_q\subset \support \nu_q$ by Theorem~\ref{thm:support}.
 Since $\support \nu_q \subset \overline{R_q}$ as mentioned after
 Example~\ref{rem:realizable},  
 we conclude that $\support \nu_q = \overline{R_q}$. 
\end{proof}

\section{Central limit theorem for persistent Betti numbers}\label{sec:CLT-pbetti}
In this section, let $\Phi = \cP$ be a homogeneous Poisson point process with unit intensity, and we prove Theorem~\ref{thm:CLT}.
The idea is to apply a result in \cite{py} which shows a central limit theorem for a certain class of functionals defined on Poisson point processes.

We here summarize necessary properties for functionals to achieve the central limit theorem. 
First of all, let us consider a sequence $\{W_n\}$ of Borel subsets in $\bR^{N}$ satisfying the following conditions:
\begin{itemize}
\item[(A1)] $|W_n|=n$ for all $n\in \N$;
\item[(A2)] $\bigcup_{n\geq 1}\bigcap_{m\geq n} W_m=\bR^{N}$;
\item[(A3)] $\lim_{n\rightarrow\infty}|(\partial W_n)^{(r)}|/n=0$ for all $r>0$;
\item[(A4)] there exists a constant $\gamma > 0$ such that $\diam(W_n)\leq \gamma n^\gamma$.
 \end{itemize}
Given such a sequence, let $\cW = \cW(\{W_n\})$ be the collection of all
subsets $A$ in $\bR^{N}$ of the form
$A = W_n + x$ for some $W_n$ in the sequence and some point $x \in \bR^{N}$.

Let $H$ be a real-valued functional defined on $\finite(\bR^{N})$.  
The functional $H$ is said to be {\it translation invariant} if it satisfies $H( \cX + y) = H(\cX)$ for any $\cX\in\finite(\bR^{N})$ and $y \in \bR^{N}$. 
Let $D_0$ be the add one cost function
\[
	D_0 H(\cX) = H(\cX \cup \{0\}) - H(\cX),\quad\cX\in\finite(\bR^{N}),
\]
which is the increment in $H$ caused by inserting a point at the origin.
The functional $H$ is {\it weakly stabilizing} on $\cW$ if there exists
a random variable $D(\infty)$
such that $D_0H(\cP_{A_n})\overset{a.s.}{\longrightarrow} D(\infty)$ as
$n\rightarrow \infty$ for any sequence $\{A_n\in\cW\}_{n\geq 1}$ tending to $\bR^{N}$.
The {\it Poisson bounded moment condition} on $\cW$ is given by
	\[
		\sup_{0\in A \in \cW} \Ex[ (D_0 H(\cP_A))^4] < \infty.
	\]
Then, we restate Theorem~3.1 in \cite{py} in the following form.
\begin{lem}[{\cite[Theorem~3.1]{py}}]\label{lem:py}
 Let $H$ be a real-valued functional defined on $\finite(\bR^{N})$.
 Assume that $H$ is translation invariant and weakly stabilizing on $\cW$, and satisfies the Poisson bounded moment condition.  
Then, there exists a constant $\sigma^2 \in [0, \infty)$ such that $n^{-1} \Var[H(\cP_{W_n})] \to \sigma^2$ and 
\[
	\frac{H(\cP_{W_n}) - \Ex[H(\cP_{W_n})]}{n^{1/2}} \dto \cN(0, \sigma^2) \text{ as } n \to \infty.
\]
\end{lem}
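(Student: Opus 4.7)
The plan is to prove this by the martingale-difference method adapted to Poisson point processes, in the style of Kesten--Lee and Penrose--Yukich. First I would tile $W_n$ by a deterministic array of sub-cubes $Q_1, Q_2, \dots, Q_{m_n}$ of fixed side length $\ell$ (ordered, say, lexicographically), chosen so that $m_n \asymp n$. Using the independence of the Poisson process on disjoint sets, let $\cF_i = \sigma(\cP \cap (Q_1 \cup \cdots \cup Q_i))$ and set
\[
D_i \;=\; \E[H(\cP_{W_n}) \mid \cF_i] \;-\; \E[H(\cP_{W_n}) \mid \cF_{i-1}],
\]
so that $H(\cP_{W_n}) - \E[H(\cP_{W_n})] = \sum_{i=1}^{m_n} D_i$ is a sum of martingale differences. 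The growth bound (A4) ensures the number of cubes stays polynomial in $n$, and (A3) makes the set of ``boundary'' cubes (those within distance $r$ of $\partial W_n$ for the relevant stabilization radius) asymptotically negligible in density.

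Next I would control each $D_i$ through the add-one-cost functional. By conditioning and reinserting an independent Poisson copy $\cP'_i$ on $Q_i$, one can write
\[
D_i \;=\; \E\bigl[\,H(\cP_{W_n}) - H((\cP_{W_n}\setminus Q_i)\cup \cP'_i)\,\bigm|\, \cF_i\bigr],
\]
and then telescope the inner difference into a sum of single-point insertions and deletions, each of which equals a translate of $D_0 H$ evaluated on a configuration that asymptotically looks Poissonian on $\bR^{N}$. The Poisson bounded moment hypothesis, combined with Jensen and the Poisson moment formulas for $|\cP \cap Q_i|$ and $|\cP'_i|$, gives a uniform bound $\sup_{n,i}\E[D_i^4] \le C_\ell < \infty$. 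This is what will deliver the Lyapunov condition $\sum_i \E[D_i^4]/n^2 \to 0$ needed for the martingale CLT.

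For the variance limit I would use weak stabilization. For an ``interior'' cube $Q_i$ (well inside $W_n$), each of the single-point add-one costs appearing in $D_i$ converges a.s.\ to the limit $D(\infty)$ as $n\to\infty$ along any sequence exhausting $\bR^{N}$; uniform $L^2$-boundedness (from the fourth-moment bound) upgrades this to $L^2$ convergence and identifies $\lim_{n\to\infty} n^{-1}\sum_i \E[D_i^2]$ as a nonnegative constant $\sigma^2$ expressible through the Palm version of $D(\infty)$. A McLeish-type martingale CLT then yields $n^{-1/2}\sum_i D_i \dto \cN(0,\sigma^2)$. The main obstacle, and the step where Penrose--Yukich's innovation really lies, is promoting the merely \emph{weak} stabilization of $D_0 H$ to the $L^2$-control of the $D_i$'s: one must simultaneously handle (i) the discrepancy between the finite window $W_n$ and $\bR^{N}$, (ii) the conditioning on $\cF_i$ (which fixes the configuration on $Q_1\cup\cdots\cup Q_i$ rather than sampling a full Poisson process), and (iii) contributions from many insertion/deletion pairs in a single cube. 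The remedy is to choose $\ell$ large but fixed, use the fourth-moment bound for uniform integrability, and absorb boundary cube contributions using (A3); the interior cubes then supply the Gaussian limit with variance $\sigma^2$, and (A4) is used only to verify that polynomial growth in $n$ does not spoil the Lyapunov estimate.
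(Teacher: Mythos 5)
This lemma is not proved in the paper at all: it is imported verbatim as Theorem~3.1 of Penrose--Yukich \cite{py}, so there is no internal proof to compare your attempt against. That said, your sketch is a faithful reconstruction of the actual Penrose--Yukich argument: the tiling of $W_n$ by fixed-size cubes, the martingale differences $D_i = \E[H(\cP_{W_n})\mid\cF_i]-\E[H(\cP_{W_n})\mid\cF_{i-1}]$ with respect to the filtration generated by the process on successive cubes, the resampling representation of $D_i$, the fourth-moment/Lyapunov control coming from the Poisson bounded moment condition, and a McLeish-type martingale CLT are exactly the ingredients of their proof. The one caveat is that what you have written is a plan rather than a proof: the step you correctly single out as the crux --- upgrading \emph{weak} stabilization of the add-one cost $D_0H$ to $L^2$ convergence of the $D_i$ for interior cubes, despite the conditioning on $\cF_i$, the resampled block $\cP_i'$, and the multiple insertions/deletions per cube --- is only gestured at (``choose $\ell$ large but fixed, use the fourth-moment bound for uniform integrability''), and this is precisely where most of the technical work in \cite{py} lives. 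Since the authors of the present paper chose to cite rather than reprove the result, your level of detail is arguably appropriate for the role the lemma plays here, but as a standalone proof it would need that step executed.
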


By using Lemma \ref{lem:py}, we prove the following theorem.
\begin{thm}\label{thm:CLT_general}
Let $\Phi = \cP$ be a homogeneous Poisson point process with unit intensity. Assume that the sequence $\{W_n\}$ satisfies \rm{(A1)--(A4)}.
Then for any $0 \le r \le s < \infty$,
	\[
		\frac{\beta_q^{r,s} (\K(\cP_{W_n})) - \E[\beta_q^{r,s} (\K(\cP_{W_n})) ]}{n^{1/2}} \dto \cN(0, \sigma_{r,s}^2)\text{ as } n \to \infty.
	\]
\end{thm}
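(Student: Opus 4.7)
The plan is to apply Lemma~\ref{lem:py} to the functional $H(\cX) = \beta_q^{r,s}(\K(\cX))$ on $\cX \in \finite(\bR^{N})$. Three properties must be verified: (i) translation invariance, (ii) weak stabilization on $\cW$, and (iii) the Poisson bounded moment condition. Translation invariance (i) is immediate from (K2), which forces $\K(\cX + y) = \K(\cX) + y$ and hence leaves persistent Betti numbers unchanged for every $y \in \bR^{N}$.

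For the Poisson bounded moment condition (iii), assumption (K3) implies that every simplex of $K(\cX \cup \{0\}, s)$ containing $0$ lies in the closed ball $\bar B_{\rho(s)}(0)$. Applying Lemma~\ref{lem:multi_add} and the Remark following it to the pair $\cX \subset \cX \cup \{0\}$,
\[
|D_0 H(\cX)| \;\le\; \binom{M}{q} + \binom{M}{q+1}, \qquad M := \cX(\bar B_{\rho(s)}(0)).
\]
When $\cX = \cP_A$ with $0 \in A \in \cW$, the variable $M$ is stochastically dominated by a Poisson random variable of parameter $|\bar B_{\rho(s)}(0)|$, whose moments of every order are finite. Consequently $\sup_{0 \in A \in \cW} \Ex[(D_0 H(\cP_A))^4] < \infty$.

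The main obstacle is weak stabilization (ii). The plan is to establish the stronger property of \emph{strong stabilization}: for almost every realization of $\cP$ there exists a finite radius $R^* = R^*(\omega)$ such that $D_0 H(\cP_A)$ takes the same value for every $A \in \cW$ with $A \supset \bar B_{R^*}(0)$. To this end, once $A \supset \bar B_{\rho(s)}(0)$ the collection $T = \{\sigma_1, \dots, \sigma_L\}$ of $q$- and $(q+1)$-simplices containing $0$ with birth time at most $s$ in $\K(\cP_A \cup \{0\})$ is determined solely by $\cP \cap \bar B_{\rho(s)}(0)$, and hence is fixed. Ordering $T$ by increasing birth time and iterating Lemma~\ref{lem:one_add} gives
\[
D_0 H(\cP_A) = \sum_{i=1}^L \delta_i(A), \qquad \delta_i(A) \in \{-1, 0, +1\},
\]
where each $\delta_i(A)$ is determined by a finite list of homological conditions at filtration values in $\{r, s, \tilde t_{\sigma_i}\}$, involving $\partial \sigma_i$ when $\dim \sigma_i = q$, or $\sigma_i$ modulo boundaries when $\dim \sigma_i = q+1$. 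The heart of the argument is to show that each such condition depends only on an almost surely finite sub-configuration of $\cP$ near the origin: the relevant cycles should admit representatives of almost surely finite diameter, which one can control via continuum-percolation-type arguments applied simultaneously at the two scales $r$ and $s$. Taking the maximum over $i$ of the resulting stabilization radii produces $R^*$ and yields weak stabilization with $D(\infty) = D_0 H(\cP_{\bar B_{R^*}(0)})$.

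Once (i), (ii), and (iii) are established, Lemma~\ref{lem:py} directly produces $\sigma_{r,s}^2 \in [0,\infty)$ and the asserted central limit theorem, completing the proof.
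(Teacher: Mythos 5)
Your verification of translation invariance and of the Poisson bounded moment condition is correct and matches the paper's argument (localization of simplices containing $0$ in $\bar B_{\rho(s)}(0)$ via (K3), then Lemma~\ref{lem:multi_add} and finiteness of Poisson moments). The problem is in your treatment of weak stabilization, which is the heart of the theorem and is left as an unproven plan. After decomposing $D_0H(\cP_A)=\sum_{i=1}^L\delta_i(A)$ with $\delta_i\in\{-1,0,+1\}$ via Lemma~\ref{lem:one_add}, you assert that each $\delta_i(A)$ is determined by homological conditions depending ``only on an almost surely finite sub-configuration of $\cP$ near the origin,'' to be controlled by ``continuum-percolation-type arguments.'' This is precisely the difficult step, and nothing in your sketch establishes it: whether the cycle classes that decide each $\delta_i$ (e.g., whether a new $q$-cycle created at time $\le r$ becomes a boundary by time $s$ in $K(\cP_A,s)$ versus in $K(\cP_{A'},s)$ for larger $A'$) admit representatives of uniformly bounded diameter is exactly the kind of global homological question that has no obvious local certificate, and no percolation estimate is actually supplied. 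As written, the proof of weak stabilization is missing.

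The paper avoids this entirely with an elementary monotonicity argument (Lemma~\ref{lem:stabilization} and Proposition~\ref{prop:weakstab}): writing
\[
D_0\beta_q^{r,s}(\K(P_{\bar B_a(0)}))
=\Bigl(\rank\tfrac{Z_q(K'_{r,a})}{Z_q(K_{r,a})}\Bigr)
-\Bigl(\rank\tfrac{Z_q(K'_{r,a})\cap B_q(K'_{s,a})}{Z_q(K_{r,a})\cap B_q(K_{s,a})}\Bigr),
\]
one shows each term is a nonnegative integer, bounded above by the (eventually constant) number of $q$- resp.\ $(q+1)$-simplices containing $0$ (Lemma~\ref{lem:col_add} plus localization), and nondecreasing in $a$ because the natural maps between the quotients for $a_1\le a_2$ are injective. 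A bounded nondecreasing integer sequence is eventually constant, which gives stabilization along balls; a sandwiching argument then extends it to arbitrary sequences in $\cW$ tending to $\bR^{N}$. No control on diameters of cycle representatives is ever needed. You should replace your percolation plan with an argument of this type, or else actually prove the finite-range determinacy you invoke; in its current form the proposal does not establish the hypothesis of Lemma~\ref{lem:py} and therefore does not prove the theorem.
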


In particular, Theorem~\ref{thm:CLT} is derived from this theorem by taking $W_n=\Lambda_{L_n}$ with $L_n=n^{1/N}$.

For the proof of Theorem~\ref{thm:CLT_general}, the essential part is to
show the weak stabilization of the persistent Betti number
$\beta^{r,s}_q(\K(\cdot))$ as a functional on $\finite(\bR^{N})$, on which
we focus below. 

We remark that, for almost surely, the Poisson point process $\cP$ consists of infinite points in $\bR^{N}$ which do not have accumulation points. In view of this property, we first show a stabilization of persistent Betti numbers in the following deterministic setting.

\begin{lem} \label{lem:stabilization}
Let $P$ be a set of points in $\bR^{N}$ without accumulation points. 
Then,  for each fixed $r\leq s$, there exist constants $D_\infty$ and $R>0$ such that
\begin{align*}
D_0\beta_q^{r,s}(\K(P_{\bar B_a(0)}))=D_\infty
\end{align*}
for all $a\geq R$. 
\end{lem}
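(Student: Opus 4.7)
The plan is to use Assumption (K3) to localize the effect of adding the point $0$, and then to express $D_0 \beta_q^{r,s}$ as a difference of two dimensions, each of which stabilizes in $a$ by a monotonicity argument inside a fixed finite-dimensional ambient space. Set $R_0 = \rho(s)$. By (K3), every simplex $\sigma$ containing $0$ with $\kappa(\sigma) \leq s$ lies in $\bar B_{R_0}(0)$; hence, for $a \geq R_0$, the finite collection $\Sigma$ of all such simplices (together with their $\kappa$-values) is determined by $P \cap \bar B_{R_0}(0)$ and does not depend on $a$. Writing $\K^a = \K(P \cap \bar B_a(0))$ and $\K^{a,0} = \K(P \cap \bar B_a(0) \cup \{0\})$, one has $K^{a,0}_t = K^a_t \cup \Sigma_t$ at the simplex level, where $\Sigma_t = \{\sigma \in \Sigma : \kappa(\sigma) \leq t\}$.

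With $V^a := Z_q(K^a_r)$, $W^a := V^a \cap B_q(K^a_s)$, and $\tilde V^a, \tilde W^a$ defined analogously for $\K^{a,0}$, the inclusions $V^a \subseteq \tilde V^a$ and $W^a \subseteq \tilde W^a$ give
\[
D_0 \beta_q^{r,s}(\K^a) \;=\; \dim(\tilde V^a / V^a) - \dim(\tilde W^a / W^a).
\]
Each $q$-chain in $C_q(K^{a,0}_t)$ splits uniquely into a $K^a$-part (containing no simplex with vertex $0$) and a $\Sigma$-part (every simplex contains $0$). Writing the cycle condition $\partial z = 0$ for $z = z' + z_\Sigma$ with $z' \in C_q(K^a_r)$ and $z_\Sigma \in C_q(\Sigma_r)$, one identifies $\tilde V^a / V^a$ canonically with the subspace of ``$\Sigma$-relative cycles'' $z_\Sigma \in C_q(\Sigma_r)$ such that a canonically defined $(q-1)$-chain $\alpha(z_\Sigma)$, supported in the fixed local complex $K^{\mathrm{loc}} := K(P \cap \bar B_{R_0}(0), r)$, lies in $B_{q-1}(K^a_r)$. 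Since this ambient space of relative cycles is a fixed finite-dimensional vector space and $B_{q-1}(K^a_r)$ grows monotonically in $a$, the subspace is monotone nondecreasing and must stabilize. An analogous analysis treats $\tilde W^a / W^a$. Taking $R$ larger than the finitely many stabilization thresholds produced, $D_0 \beta_q^{r,s}(\K^a)$ becomes constant for all $a \geq R$, yielding the common value $D_\infty$.

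The main obstacle is the analysis of $\tilde W^a / W^a$: an element there corresponds to a pair consisting of a cycle representative $z \in \tilde V^a$ and a filling chain $d \in C_{q+1}(K^{a,0}_s)$ with $\partial d = z$, both of which may have intertwined $\Sigma$- and $K^a$-components that must be disentangled along the same lines as above, now for both the cycle at time $r$ and the filling at time $s$. After this reduction, one again obtains a monotone condition inside a fixed finite-dimensional space of $(\Sigma, K^{\mathrm{loc}})$-data, and the stabilization follows from the same finite-dimensional linear algebra.
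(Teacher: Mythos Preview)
Your approach is the same as the paper's: decompose $D_0\beta_q^{r,s}$ as $\dim(\tilde V^a/V^a)-\dim(\tilde W^a/W^a)$ and show that each summand is a bounded, eventually monotone nonnegative integer in $a$. For $\tilde V^a/V^a$ your relative-cycle description is correct and equivalent to the paper's argument, which shows directly that the natural map
\[
\frac{Z_q(K'_{r,a_1})}{Z_q(K_{r,a_1})}\longrightarrow\frac{Z_q(K'_{r,a_2})}{Z_q(K_{r,a_2})},\qquad [c]\mapsto[c],
\]
is injective (a cycle with vanishing $\Sigma$-part already lies in $Z_q(K^a_r)$). Boundedness in both treatments comes from the fixed finite set $\Sigma$ of simplices through $0$, which is independent of $a$ once $a\ge\rho(s)$.

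The gap is in your handling of $\tilde W^a/W^a$. You propose to track ``$(\Sigma,K^{\mathrm{loc}})$-data'' coming from a pair $(z,d)$ with $\partial d=z$, but this data is not an invariant of the class $[z]\in\tilde W^a/W^a$: different fillings $d$ yield different $d_\Sigma$, and, more seriously, the projection $[z]\mapsto z_\Sigma$ is \emph{not} injective on $\tilde W^a/W^a$. Its kernel is
\[
\frac{Z_q(K^a_r)\cap B_q(K^{a,0}_s)}{Z_q(K^a_r)\cap B_q(K^a_s)},
\]
since a cycle in $K^a_r$ can be a boundary in $K^{a,0}_s$ without being one in $K^a_s$. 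Your sentence ``one again obtains a monotone condition inside a fixed finite-dimensional space'' is precisely the assertion that needs proof, and nothing in your sketch addresses this kernel. The paper instead bounds $\dim(\tilde W^a/W^a)\le |K^0_{r,*,q}|+|K^0_{s,*,q+1}|$ via Lemma~\ref{lem:rank_lemma} and then argues monotonicity by exhibiting the inclusion-induced map $[c]\mapsto[c]$ between the full quotients $\dfrac{Z_q(K'_{r,a})\cap B_q(K'_{s,a})}{Z_q(K_{r,a})\cap B_q(K_{s,a})}$ at $a_1$ and $a_2$ as an injection; this is what you should supply in place of the hand-wave.
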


\begin{proof} 
Let $P'=P\cup\{0\}$. 
Let $K_{r,a}=K(P_{\bar B_a(0)},r)$ be the simplicial complex defined on
 $P_{\bar B_a(0)}$ with parameter $r$, and similarly let $K_{r,a}'=K(P'_{\bar B_a(0)},r)$.  

From the definition (\ref{eq:rsbetti}), $D_0 \beta^{r,s}_q(\K(P_{\bar B_a(0)}))$
can be expressed as
\begin{align*}
&D_0 \beta^{r,s}_q(\K(P_{\bar B_a(0)})) \\
&=\rank\frac{Z_q(K'_{r,a})}{Z_q(K'_{r,a})\cap B_q(K'_{s,a})}-
\rank\frac{Z_q(K_{r,a})}{Z_q(K_{r,a})\cap B_q(K_{s,a})}\\
&=(\rank Z_q(K'_{r,a})-\rank Z_q(K_{r,a}))\\
&\quad-(\rank Z_q(K'_{r,a})\cap B_q(K'_{s,a})-\rank Z_q(K_{r,a})\cap B_q(K_{s,a})).
\end{align*}
Hence, it suffices to show the stabilization with respect to $a$ for $\rank Z_q(K_{r,a})$ and $\rank (Z_q(K_{r,a})\cap B_q(K_{s,a}))$ separately.

Let us study $\rank Z_q(K_{r,a})$. Since the rank takes non-negative
 integer values, we show the bounded and the non-decreasing properties.
 First of all, note that $K_{r,a}\subset K'_{r,a}$,
 and hence $Z_q(K_{r,a})\subset Z_{q}(K'_{r,a})$. Let us express $K'_{r,a}$ as a disjoint union $K'_{r,a}=K_{r,a}\sqcup K^0_{r,a}$, where $K^0_{r,a}$ is the set of simplices having the point 0, and let $K^0_{r,a,q}=\{\sigma\in (K'_{r,a})_q : 0\in\sigma\}$. 

Let $\partial_{q,a}$ and $\partial'_{q,a}$ be the $q$th boundary maps on $K_{r,a}$ and $K'_{r,a}$, respectively. Then, we can obtain the following block matrix form
\begin{align}\label{eq:dashboundary}
\partial'_{q,a}=
\left[\begin{array}{cc}
M_{1,\rho} & \mathbf{0}\\
M_{2,\rho} & \partial_{q,a}\\
\end{array}\right],
\end{align}
where the first columns and rows are arranged by the simplices in $K^0_{r,a,q}$ and $K^0_{r,a,q-1}$, and the second columns and rows correspond to the simplices in $K_{r,a}$.

Recall that any simplex $\sigma \in K(P, r)$ containing the point 0 is included in $\bar B_{\rho(r)}(0)$.
Hence, the set $K^0_{r,a,q}$ becomes independent of $a$ for $a\geq\rho(r)$, which we denote by $K^0_{r,*,q}$. 
From this observation and Lemma \ref{lem:col_add} applied to the matrix form (\ref{eq:dashboundary}),  
we have
\begin{align*}
\rank Z_q(K'_{r,a})-\rank Z_q(K_{r,a})\leq |K^0_{r,a,q}|= |K^0_{r,*,q}|, 
\end{align*}
 which gives the boundedness. 

In order to show the non-decreasing property, let us consider a homomorphism defined by
\begin{align*}
f \colon \frac{Z_q(K'_{r,a_1})}{Z_q(K_{r,a_1})}\ni [c]\longmapsto [c]\in \frac{Z_q(K'_{r,a_2})}{Z_q(K_{r,a_2})}
\end{align*}
for $a_1\leq a_2$.
This map is well-defined because $Z_q(K_{r,a_1})\subset Z_q(K_{r,a_2})$
 and $Z_q(K'_{r,a_1})\subset Z_q(K'_{r,a_2})$ hold. Suppose that $f[c]=0$. Then, the cycle $c\in Z_q(K'_{r,a_1})$ is in $Z_q(K_{r,a_2})$. It means that the $q$-simplices consisting of $c$ do not contain the point $0$, and hence $c\in Z_q(K_{r,a_1})$. This shows that the map $f$ is injective. From this observation, we have the inequality
\begin{align*}
\rank Z_q(K'_{r,a_1})/Z_q(K_{r,a_1}) \leq \rank Z_q(K'_{r,a_2})/Z_q(K_{r,a_2}),
\end{align*}
which leads to the desired non-decreasing property. This finishes the proof of the stabilization of $\rank Z_q(K_{r,a})$.

Let us study the stabilization of $\rank (Z_q(K_{r,a})\cap
 B_q(K_{s,a}))$. The strategy is basically the same as above. It follows
 from Lemma \ref{lem:rank_lemma} that 
\begin{align*}
\rank \frac{Z_q(K'_{r,a})\cap B_q(K'_{s,a})}{Z_q(K_{r,a})\cap B_q(K_{s,a})}\leq \rank\frac{Z_q(K'_{r,a})}{Z_q(K_{r,a})} + \rank\frac{B_q(K'_{s,a})}{B_q(K_{s,a})}.
\end{align*}
Then, from the same reasoning used in  $\rank Z_q(K_{r,a})$, we have the stabilization $|K^0_{s,a,q+1}|=|K^0_{s,*,q+1}|$ for large $a$. Hence, we have the boundedness
\begin{align*}
&\rank Z_q(K'_{r,a})\cap B_q(K'_{s,a})-\rank Z_q(K_{r,a})\cap B_q(K_{s,a})\leq |K^0_{r,*,q}| +|K^0_{s,*,q+1}|.
\end{align*}
Similarly, we can show the injectivity of the map
\begin{align*}
f \colon \frac{Z_q(K'_{r,a_1})\cap B_q(K'_{s,a_1})}{Z_q(K_{r,a_1})\cap B_q(K_{s,a_1})}\longrightarrow \frac{Z_q(K'_{r,a_2})\cap B_q(K'_{s,a_2})}{Z_q(K_{r,a_2})\cap B_q(K_{s,a_2})}, 
\quad f[c]=[c]
\end{align*}
from which the non-decreasing property follows. This completes the proof of the lemma.  
\end{proof}

\begin{prop}\label{prop:weakstab}
The functional $\beta^{r,s}_q(\K(\cdot))$ is weakly stabilizing.
\end{prop}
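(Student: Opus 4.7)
The plan is to reduce Proposition~\ref{prop:weakstab} to the deterministic stabilization of Lemma~\ref{lem:stabilization}. Since the homogeneous Poisson point process $\cP$ is almost surely locally finite, Lemma~\ref{lem:stabilization} applies to (almost every realization of) $\cP$: there exist an a.s.\ finite radius $R = R(\cP)$ and a random variable $D(\infty) = D_\infty(\cP)$ such that
\[
D_0 \beta^{r,s}_q(\K(\cP_{\bar B_a(0)})) = D(\infty) \quad \text{for all } a \ge R.
\]
What remains is to upgrade this stabilization along the nested sequence of balls to stabilization along an arbitrary $\{A_n\} \subset \cW$ tending to $\bR^{N}$, i.e.\ to show $D_0 \beta^{r,s}_q(\K(\cP_{A_n})) = D(\infty)$ for all sufficiently large $n$.

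For this, I would recycle the two-piece decomposition used inside the proof of Lemma~\ref{lem:stabilization}. For any bounded Borel set $A \ni 0$, write $K_{t,A} = K(\cP_A, t)$ and $K'_{t,A} = K(\cP_A \cup \{0\}, t)$ and set
\[
E_r(A) = \rank Z_q(K'_{r,A}) - \rank Z_q(K_{r,A}),
\]
\[
F_{r,s}(A) = \rank \bigl(Z_q(K'_{r,A}) \cap B_q(K'_{s,A})\bigr) - \rank \bigl(Z_q(K_{r,A}) \cap B_q(K_{s,A})\bigr),
\]
so that $D_0 \beta^{r,s}_q(\K(\cP_A)) = E_r(A) - F_{r,s}(A)$. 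The injective comparison maps $[c] \mapsto [c]$ used in the non-decreasing step of Lemma~\ref{lem:stabilization} depend only on the inclusion of underlying point sets, not on concentricity of balls; the same argument therefore gives that, for any inclusion $A_1 \subset A_2$ of bounded Borel sets containing $0$, one has $E_r(A_1) \le E_r(A_2)$ and $F_{r,s}(A_1) \le F_{r,s}(A_2)$. Moreover, the boundedness proof from Lemma~\ref{lem:stabilization} shows that for $A \supset \bar B_{\rho(s)}(0)$ both quantities are bounded above by the ($\cP$-dependent) finite numbers $|K^0_{r,*,q}|$ and $|K^0_{r,*,q}| + |K^0_{s,*,q+1}|$ respectively.

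With these ingredients, the conclusion is a sandwich argument. Let $\{A_n\} \subset \cW$ tend to $\bR^{N}$. For all large $n$ one has $\bar B_R(0) \subset A_n$; by the diameter bound (A4) the set $A_n$ is also bounded, hence $A_n \subset \bar B_{R_n}(0)$ for some $R_n \ge R$. Monotonicity yields
\[
E_r(\bar B_R(0)) \le E_r(A_n) \le E_r(\bar B_{R_n}(0)), \qquad F_{r,s}(\bar B_R(0)) \le F_{r,s}(A_n) \le F_{r,s}(\bar B_{R_n}(0)),
\]
and Lemma~\ref{lem:stabilization} forces the outer terms in each sandwich to coincide with the stabilized values $E_r(\infty)$ and $F_{r,s}(\infty)$. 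Hence $D_0 \beta^{r,s}_q(\K(\cP_{A_n})) = E_r(\infty) - F_{r,s}(\infty) = D(\infty)$ for all large $n$, giving a.s.\ convergence to a single random variable independent of the approximating sequence. The main technical hurdle is verifying the monotonicity of $F_{r,s}$: the injectivity of $[c] \mapsto [c]$ requires showing that a $q$-chain which is a boundary in both $K_{s,A_2}$ and $K'_{s,A_1}$ is already a boundary in the smaller $K_{s,A_1}$, which one handles by splitting a witnessing $(q+1)$-chain into parts containing and not containing $0$. This is precisely the delicate step dispatched at the end of the proof of Lemma~\ref{lem:stabilization}, and once granted the proposition follows.
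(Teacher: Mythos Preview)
Your proposal is correct and matches the paper's proof essentially line for line: both sandwich $A_n$ between $\bar B_R(0)$ and a larger ball, split $D_0\beta^{r,s}_q$ into the $Z_q$-piece and the $Z_q\cap B_q$-piece, and invoke the injectivity of the comparison maps $[c]\mapsto[c]$ from Lemma~\ref{lem:stabilization} to force equality of the squeezed ranks. The only cosmetic difference is that the paper writes the sandwich directly as inclusions of quotient spaces rather than introducing the named functionals $E_r$ and $F_{r,s}$.
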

\begin{proof}
Let $R>0$ be chosen as in Lemma \ref{lem:stabilization} and let $\{A_n\in\cW\}_{n\geq 1}$ be a sequence tending to $\bR^{N}$. Then, there exists $n_0\in \N$ such that $B_R(0)\subset A_n$ for all $n\geq n_0$. 

For $n\geq n_0$, 
let us set $L_{r,n}=K(\cP_{A_n},r)$.
Then, since $A_n$ is bounded, there exists $a>R$ such that 
\begin{align*}
B_R(0)\subset A_n\subset B_a(0).
 \end{align*}
 Then, as in the same way used for showing the injectivity in the proof of Lemma \ref{lem:stabilization}, we can show 
 \begin{align*}
 \frac{Z_q(K'_{r,R})}{Z_q(K_{r,R})}\subset
 \frac{Z_q(L'_{r,n})}{Z_q(L_{r,n})}\subset
 \frac{Z_q(K'_{r,a})}{Z_q(K_{r,a})},
 \end{align*}
 where $K_{r,a}=K(\cP_{\bar B_a(0)},r)$ as before. 
 Since the ranks of ${Z_q(K'_{r,R})}/{Z_q(K_{r,R})}$ and
  ${Z_q(K'_{r,a})}/{Z_q(K_{r,a})}$ are equal, for all $n \ge n_0$ 
 \begin{align*}
 \rank Z_q(K'_{r,R})-\rank Z_q(K_{r,R})=
 \rank Z_q(L'_{r,n})-\rank Z_q(L_{r,n}).
 \end{align*}
 We  can also show that $\rank Z_q(L'_{r,n})\cap B_q(L'_{s,n})-\rank Z_q(L_{r,n})\cap B_q(L_{s,n})$ is invariant for $n\geq n_0$ in a similar manner. 
 This completes the proof.
 \end{proof}

 \begin{proof}[Proof of Theorem~{\rm \ref{thm:CLT_general}}]
For fixed $r\le s$, we regard the persistent Betti number $\beta^{r,s}_q(\K(\cdot))$ as a functional on $\finite(\bR^{N})$, and check the three conditions stated in Lemma \ref{lem:py}.
 First, the translation invariance is obvious, because $\kappa$ is translation invariant. 
 Next, let us consider the Poisson bounded moment condition on $\cW$. We note the following estimate:
 \begin{align*}
 |D_0 \beta_q^{r,s} (\K(\cP_A)) |
 &= |\beta_q^{r,s} (\K(\cP_A\cup\{0\}))-\beta_q^{r,s} (\K(\cP_A))|\\
 &\le \sum_{j=q,q+1}|K_j(\cP_A\cup\{0\},s)\setminus K_j(\cP_A,s)|\\
 &\le \sum_{j=q,q+1}F_j(\cP_{\bar B_{\rho(s)}(0)},s)<\infty.
 \end{align*}
 Here, the second inequality follows from Lemma \ref{lem:multi_add},
  and the boundedness of the last expression is shown by the finite
  moments of the Poisson point process on $\bar B_{\rho(s)}(0)$. 
  We showed the weak stabilization in Proposition~\ref{prop:weakstab}.
 The proof of Theorem \ref{thm:CLT_general} is now complete.  
 \end{proof}

 \section{Conclusions}\label{sec:conclusion}
 In this paper, we studied a convergence of persistence diagrams and
 persistent Betti numbers for stationary point processes, and a central limit
 theorem of persistent Betti numbers for homogeneous Poisson point
 process. Several important problems are still yet to be solved. 

 \begin{enumerate}
  \item We showed the existence of limiting persistence diagram for 
	simplicial complexes built over stationary ergodic point
	processes. Such convergence results can be expected for more general
	random simplicial/cell complexes studied in \cite{hs_frieze,
	hs_tutte}. It would also be important to
	investigate the rate of convergence from the statistical and
	computational point of view. 
  \item Attractiveness/repulsiveness of point processes are reflected
	on persistence diagrams (see Figure~\ref{fig:pppd}).
	For example, the mass of the limiting persistence diagram
	$\nu_q$ for negatively correlated point process
	seems to become more concentrated than that for positively
	correlated point process. 
  \item The moments of the limiting persistence diagram 
	$\int_{\Delta} |y-x|^n \nu_q(dxdy)$, should be studied. 
 	 Other properties of limiting persistence diagrams such
 	 as continuity, absolute continuity/singularity,
 	 comparison etc. should also be 
        investigated thoroughly for practical purposes (cf. \cite{kfh, pnas}). 
 \item The central limit theorem for persistent Betti numbers (even for
       usual Betti numbers) is only proved for Poisson point processes.
       It could be extended to more general stationary point processes. 
       We also expect that a scaled persistence diagram converges to
       a Gaussian field on $\Delta$. 
\end{enumerate}

\appendix
\section{Convergence-determining class for vague convergence}\label{app:convergence}
We provide a sufficient condition for a class of $\bdd$-sets to be a convergence determining class for vague convergence. 
We use the same notations as in Section \ref{sec:rm}. 
Assume that a class $\mfA \subset \bdd$ is closed under finite intersections. Let us define
\[
	\cR(\mfA) = \bigg\{\bigcup_{finite} A_i: A_i \in \mfA \bigg\}.
\] 
Then $\cR(\mfA)$ is closed under both finite intersections and finite unions. Furthermore, 
if $\mu_n(A) \to \mu(A)$ for all $A \in \mfA$, 
then so does for all $A \in \cR(\mfA)$, because 
\[
\mu(\bigcup_{i = 1}^m A_i) = \sum_i \mu(A_i) - \sum_{i\neq j} \mu(A_i \bigcap A_j) +
\cdots + (-1)^{m-1} \mu(\bigcap_{i=1}^m  A_i).   
\] 
\begin{lem}\label{lem:open-closed}
Assume that a class $\mfA$ is closed under finite intersections, and that 
\begin{itemize}
	\item[\rm (i)] each open set $G \in \bdd$ is a countable union of $\cR(\mfA)$-sets, and
	\item[\rm(ii)] each closed set $F \in \bdd$ is a countable intersection of $\cR(\mfA)$-sets.
\end{itemize}
If $\mu_n(A) \to \mu(A)$ for all $A \in \mfA$, then $\mu_n$  converges vaguely to $\mu$. In particular, the class $\mfA$ is a convergence-determining class for $\mu$ provided that $\mfA \subset \mfB_\mu$.
\end{lem}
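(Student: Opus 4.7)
The plan is to verify the equivalent characterization in Lemma~\ref{lem:equivalent-condition-vague-convergence}(iii), namely $\limsup_n \mu_n(F) \le \mu(F)$ for every closed $F \in \bdd$ and $\liminf_n \mu_n(G) \ge \mu(G)$ for every open $G \in \bdd$. The preparatory observation, already recorded just before the statement, is that the assumed pointwise convergence on $\mfA$ extends automatically to all of $\cR(\mfA)$ by inclusion--exclusion. Moreover, since $\mfA$ is closed under finite intersections, distributivity of $\cap$ over $\cup$ shows that $\cR(\mfA)$ is closed under finite intersections as well as finite unions.

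For an open set $G \in \bdd$, I would invoke hypothesis (i) to write $G = \bigcup_{k \ge 1} C_k$ with $C_k \in \cR(\mfA)$, and then monotonize by setting $A_k := \bigcup_{j \le k} C_j \in \cR(\mfA)$, so that $A_k \uparrow G$. Then for each fixed $k$,
\[
\liminf_{n \to \infty} \mu_n(G) \;\ge\; \lim_{n \to \infty} \mu_n(A_k) \;=\; \mu(A_k),
\]
and letting $k \to \infty$ with continuity of $\mu$ from below yields $\liminf_n \mu_n(G) \ge \mu(G)$.

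The closed-set half runs symmetrically. Using hypothesis (ii), I would write $F = \bigcap_{k \ge 1} D_k$ with $D_k \in \cR(\mfA)$ and set $B_k := \bigcap_{j \le k} D_j \in \cR(\mfA)$, producing a decreasing sequence $B_k \downarrow F$. Here one needs $\mu(B_1) < \infty$ to apply continuity of $\mu$ from above; this is guaranteed because $B_1 \in \bdd$ is relatively compact and $\mu$ is Radon. Consequently $\mu(B_k) \to \mu(F)$, and for each fixed $k$,
\[
\limsup_{n \to \infty} \mu_n(F) \;\le\; \lim_{n \to \infty} \mu_n(B_k) \;=\; \mu(B_k),
\]
so letting $k \to \infty$ delivers $\limsup_n \mu_n(F) \le \mu(F)$. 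Lemma~\ref{lem:equivalent-condition-vague-convergence} then produces $\mu_n \vto \mu$, and the "in particular" assertion is immediate from the definition of a convergence-determining class for $\mu$, since $\mfA \subset \mfB_\mu$ is now assumed.

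No serious obstacle is expected: the only subtlety is that the monotonization must be performed \emph{inside} $\cR(\mfA)$, which is exactly why the hypothesis is imposed on $\mfA$ (closed under finite intersections) rather than on the larger class $\cR(\mfA)$ directly. The finiteness of $\mu(B_1)$ needed for continuity from above on the closed side is the one place where being a Radon measure is used; everything else is a routine sandwiching argument combined with monotone convergence.
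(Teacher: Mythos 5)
Your proof is correct and follows essentially the same route as the paper's: both extend the convergence from $\mfA$ to $\cR(\mfA)$ by inclusion--exclusion, approximate an open $G$ from inside by finite unions and a closed $F$ from outside by finite intersections of $\cR(\mfA)$-sets (your monotonization plus continuity from below/above is just the paper's ``choose $m$ within $\eps$'' step in different clothing), and conclude via Lemma~\ref{lem:equivalent-condition-vague-convergence}(iii). Your explicit remark that $\mu(B_1)<\infty$ because $B_1\in\bdd$ and $\mu$ is Radon is the same finiteness point the paper makes implicitly when it notes $A_i\in\bdd$ before choosing $m$.
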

\begin{proof}
	Let $G \in \bdd$ be an open set. By assumption, there are sets $A_i \in \cR( \mfA)$ such that 
\[
	G = \bigcup_{i = 1}^\infty A_i.
\]
Given $\eps > 0$, choose an $m$ such that 
\[
	\mu(\bigcup_{i = 1}^m A_i) > \mu(G) - \eps.
\]
Then we have
\[
	\mu(G)-\eps < \mu(\bigcup_{i = 1}^m A_i) = \lim_{n \to \infty} \mu_n(\bigcup_{i = 1}^m A_i) \le \liminf_{n \to \infty} \mu_n(G).
\]
Since $\eps$ is arbitrary, we get 
\[
	\mu(G) \le \liminf_{n \to \infty} \mu_n(G).
\]

Now for a closed set $F \in \bdd$, take $A_i \in \cR(\mfA)$ such that 
\[
	F = \bigcap_{i = 1}^\infty A_i.
\]
Since $A_i \in \bdd$, for given $\eps > 0$, we can choose $m$ large enough such that  
\[
	\mu(F) + \eps > \mu(\bigcap_{i = 1}^m A_i).
\]
Then, it follows from $\cap_{i = 1}^m A_i \in \cR(\mfA)$ that
\[
	\mu(F) +\eps > \mu(\bigcap_{i = 1}^m A_i) = \lim_{n \to \infty} \mu_n(\bigcap_{i = 1}^m A_i) \ge \limsup_{n \to \infty} \mu_n(F) .
\]
Letting $\eps \to 0$, we get
\[
	\mu(F)  \ge \limsup_{n \to \infty} \mu_n(F).
\]
Therefore, the conclusion follows from Lemma~\ref{lem:equivalent-condition-vague-convergence}.
\end{proof}

For given $\mfA$, let $\mfA_{x,\eps}$ be the class of $\mfA$-sets
satisfying $x \in A^{\circ} \subset A \subset B_{\eps}(x)$, where $A^{\circ}$ is the interior of $A$. 
Let $\partial \mfA_{x, \eps}$ be the class of their boundaries, i.e., $\partial \mfA_{x, \eps} = \{\partial A : A \in \mfA_{x, \eps} \}$.

The following theorem gives a
sufficient condition for a
class $\mfA$ to be a convergence-determining class for
vague convergence of Radon measures (see Theorem 2.4 in
\cite{billingsley} for an analogous result on weak convergence of probability measures). 
\begin{thm}\label{thm:convergence-determining-class-criterion}
	Suppose that $\mfA$ is closed under finite intersections and, for each $x \in S$ and
 $\eps > 0$, $\partial \mfA_{x, \eps}$ contains either
 $\emptyset$ or uncountably many disjoint sets. Then, $\mfA$ is a
 convergence-determining class. Moreover, for any measure $\mu \in \rM$,
 $\mfA$ contains a countable convergence-determining class for $\mu$.
\end{thm}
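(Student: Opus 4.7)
The approach is to apply Lemma~\ref{lem:open-closed} to the subclass $\mfA_\mu := \mfA \cap \bdd_\mu$, which is closed under finite intersections because $\partial(A \cap B) \subseteq \partial A \cup \partial B$. The central ingredient is a selection lemma: for every $\mu \in \rM$, $x \in S$, and $\eps > 0$, there exists $A \in \mfA_{x,\eps}$ with $\mu(\partial A) = 0$. This follows from the hypothesis: either $\emptyset \in \partial \mfA_{x,\eps}$, in which case some $A$ already has $\partial A = \emptyset$; or $\partial \mfA_{x,\eps}$ contains uncountably many pairwise disjoint sets inside the relatively compact ball $B_\eps(x)$, and since $\mu(B_\eps(x)) < \infty$ only countably many can have positive $\mu$-mass, leaving uncountably many candidates $A$ with $\mu(\partial A) = 0$.

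I would then verify the hypotheses of Lemma~\ref{lem:open-closed} for $\mfA_\mu$. For condition (i), given an open $G \in \bdd$, for each $y \in G$ choose $\eps_y > 0$ with $B_{\eps_y}(y) \subseteq G$ and use the selection lemma to pick $A_y \in \mfA_{y, \eps_y} \cap \bdd_\mu$; then $\{A_y^\circ\}_{y \in G}$ covers $G$, and second countability of $S$ provides a countable subcover, exhibiting $G$ as a countable union of $\mfA_\mu$-sets. For condition (ii), given a closed $F \in \bdd$ (which is compact since $S$ is locally compact with countable basis), for each $m \ge 1$ cover $F$ by sets of the form $A \in \mfA_{x, 1/m} \cap \bdd_\mu$ with $x \in F$, extract a finite subcover, and take $R_m \in \cR(\mfA_\mu)$ to be its union; then $F \subseteq R_m \subseteq F^{(1/m)}$, and since $\bigcap_m F^{(1/m)} = F$ we obtain $F = \bigcap_{m \ge 1} R_m$. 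Lemma~\ref{lem:open-closed} then yields that $\mfA$ is a convergence-determining class for every $\mu$, hence a convergence-determining class.

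For the countable subfamily, I fix a countable basis $\{V_n\}_{n \ge 1}$ of relatively compact open sets in $S$ and apply the construction of (i) above to each $V_n$ to obtain countably many sets $\{A_{n,k}\}_{k \ge 1} \subseteq \mfA_\mu$ with $V_n = \bigcup_k A_{n,k}$. Set $\mfA_0 := \{A_{n,k} : n,k \ge 1\}$. Any open $G \in \bdd$ equals the union of basis sets $V_n \subseteq G$, each in turn a countable union of $A_{n,k}$'s, so condition (i) of Lemma~\ref{lem:open-closed} holds for $\mfA_0$. For condition (ii), given a closed $F \in \bdd$ and $m \ge 1$, the relatively compact open set $F^{(1/m)}$ is itself a union of basis sets $V_n \subseteq F^{(1/m)}$; the corresponding $A_{n,k}$'s form a countable open cover of $F$ with every member contained in $F^{(1/m)}$, and compactness of $F$ extracts a finite subcover whose union $R_m \in \cR(\mfA_0)$ satisfies $F \subseteq R_m \subseteq F^{(1/m)}$, so $F = \bigcap_m R_m$. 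A second application of Lemma~\ref{lem:open-closed} gives that $\mfA_0$ is a countable convergence-determining class for $\mu$.

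The step I expect to be the main obstacle is the countable-subfamily construction, because the same pool $\mfA_0$ must simultaneously cover open sets from inside and sandwich closed sets between shrinking $\cR(\mfA_0)$-neighborhoods. The organizing trick is to base everything on the single countable basis $\{V_n\}$ of $S$, so that the sets produced for (i) automatically provide the finite coverings needed in (ii) once compactness of closed bounded sets (guaranteed by local compactness plus countable basis) is invoked.
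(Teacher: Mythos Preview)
Your argument follows the same route as the paper's, with one real gap in the countable-subfamily step. Lemma~\ref{lem:open-closed} requires the class to be closed under finite intersections: its proof of condition~(ii) uses $\bigcap_{i=1}^m A_i \in \cR(\mfA)$ so that inclusion--exclusion transfers convergence from $\mfA$ to $\cR(\mfA)$. Your class $\mfA_0 = \{A_{n,k} : n,k \ge 1\}$ is not closed under finite intersections, so the ``second application of Lemma~\ref{lem:open-closed}'' does not go through as stated. The paper's remedy is exactly what you need: replace $\mfA_0$ by $\mfA'_\mu := \{\text{finite intersections of the } A_{n,k}\}$, which is still countable and still contained in $\mfA_\mu$ (since both $\mfA$ and $\bdd_\mu$ are closed under finite intersections). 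Your verifications of (i) and (ii) then carry over to $\mfA'_\mu$ unchanged, because $\mfA_0 \subset \mfA'_\mu$ and $\cR(\mfA_0) \subset \cR(\mfA'_\mu)$.

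Apart from this, your proof matches the paper's closely. One cosmetic point: $F^{(1/m)}$ need not be relatively compact, but you never actually use that---only that the open set $\{y : d(y,F) < 1/m\}$ is a union of basis elements and that $F$ itself is compact, which follows already from $F$ being closed and relatively compact in a Hausdorff space.
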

\begin{proof}
	Fix an arbitrary $\mu\in \rM$, and let $\mfA_\mu = \mfA \cap \bdd_\mu$ be
 the class of $\mu$-continuity sets in $\mfA$. Since  
	\[
		\partial (A \cap B) \subset (\partial A) \cup (\partial B),
	\]
$\mfA_\mu$ is again closed under finite intersections. 

Let $G \in \bdd$ be an open set. For $x \in G$, choose $\eps > 0$ such that $B_\eps(x) \subset G$. By the assumption,
 if $\partial \mfA_{x, \eps}$ does not contain 
 $\emptyset$, then it must contain uncountably many disjoint sets.
Hence, in either case, $\partial\mfA_{x,\eps}$ contains a set $A_x$ of $\mu$-measure $0$, or $A_x \in \mfA_\mu$.  Therefore, $G$ can be written as  
\[
	G = \bigcup_{x \in G} A_x^\circ = \bigcup_{x \in G} A_x. 
\]
Since $S$ is a separable metric space, there is a countable subcollection $\{A_{x_i}^\circ\}$ of $\{A_x^\circ : x \in G\}$ which covers $G$, namely,
\[
	G = \bigcup_{i=1}^\infty A_{x_i}^\circ.
\]

Let $\{G_i\}_{i = 1}^\infty$ be a countable basis of $S$. For each $i$, we have just shown that there are countable sets $\{A_{i, j}\}_{j = 1}^\infty \subset \mfA_\mu$ such that 
\[
	G_i = \bigcup_{j = 1}^\infty A_{i, j}^\circ = \bigcup_{j = 1}^\infty A_{i, j}.
\]
Set 
\[
	\mfA'_\mu = \{\cap_{finite} A_{i, j}\}.
\]
Then $\mfA'_\mu \subset \mfA_\mu$ is countable and closed under finite intersections. The remaining task is to show that $\mfA'_\mu$ satisfies the two conditions in Lemma \ref{lem:open-closed}. The condition for open sets is clear from the construction of $\mfA'_\mu$. 

Next, let $F \in \bdd$ be a closed (thus compact) set. For each $\eps > 0$, let
\[
	F^{(\eps)} = \{x \in S: d(x, F) = \inf_{y \in F} \rho(x,y) \le \eps\}.
\]
Then $F = \cap_{p = 1}^\infty F^{(\frac 1p)}$. We claim that, for each $\eps>0$, there exist $m = m(\eps)$ and a collection of sets $\{C_{k}\}_{k = 1}^m\subset \mfA'_{\mu}$  such that 
\[
	F \subset \bigcup_{k = 1}^m C_{k}  \subset F^{(\eps)}.
\]
Indeed, for each $x \in F$, there is a pair $(i_x,j_x)$ such that $x \in A_{i_x, j_x}^\circ \subset A_{i_x, j_x} \subset G_{i_x}  \subset B_{\eps}(x)$. Let $C_x = A_{i_x, j_x}$. Then
\[	
	F \subset \bigcup_{x \in F} C_x^{\circ}.
\]
Since $F$ is compact, there is a finite collection  $\{C_{x_k}^{\circ}\}_{k = 1}^m$ such that
\[
	F \subset \bigcup_{k = 1}^m C_{x_k}^{\circ}.
\]
Finally, note that $C_{x_k}^{\circ} \subset C_{x_k} \subset F^{(\eps)}$, we have 
\[
	F \subset \bigcup_{k = 1}^m C_{x_k} \subset F^{(\eps)}.
\]
Therefore, the condition for closed sets in Lemma \ref{lem:open-closed} is satisfied, which completes the proof of Theorem~\ref{thm:convergence-determining-class-criterion}.
\end{proof}

\begin{cor}\label{cor:condition-for-A}
The class 
 \[
	\mfA=\{(r_1, r_2]\times(s_1, s_2], [0, r_2] \times (s_1,  s_2] \subset \Delta : 0\le r_1 \le r_2 \le s_1 \le s_2 \le \infty\}
 \]
satisfies the conditions of Proposition~{\rm\ref{prop:LLN}}, namely, for any measure $\mu$, it  contains a countable convergence determining class for $\mu$.
\end{cor}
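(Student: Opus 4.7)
The approach is to apply Theorem~\ref{thm:convergence-determining-class-criterion} to $\mfA$, which simultaneously gives that $\mfA$ is a convergence-determining class and that it contains a countable convergence-determining class for every $\mu \in \rM(\Delta)$. The finite covering property required by Proposition~\ref{prop:LLN} will be handled separately.

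First, I would check that $\mfA$ is closed under finite intersections: intersecting two half-open rectangles of the prescribed form yields a half-open rectangle of the same form, since one takes maxima of the lower endpoints and minima of the upper endpoints, and the constraint $r_1 \le r_2 \le s_1 \le s_2$ is preserved by these operations. The mixed case involving $[0, r_2] \times (s_1, s_2]$ sets is analogous: $[0, a] \cap (b_1, b_2] = (b_1, \min(a, b_2)]$.

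Second, I would verify the hypothesis of Theorem~\ref{thm:convergence-determining-class-criterion}, namely that for each $x = (a, b) \in \Delta$ and $\eps > 0$ the collection $\partial \mfA_{x, \eps}$ contains uncountably many pairwise disjoint sets. The key observation is that if $\{A_t\}_{0 < t < \delta}$ is a strictly nested subfamily of $\mfA_{x, \eps}$ with $A_s \subset A_t^{\circ}$ whenever $s < t$, then $\partial A_s \subset A_t^{\circ}$ is disjoint from $\partial A_t$, giving uncountably many disjoint boundaries. In the generic case $0 < a < b < \infty$ I would take
\[
A_t = (a - t, a + t] \times (b - t, b + t], \quad 0 < t < \delta,
\]
with $\delta < \min(a, (b-a)/2, \eps/\sqrt{2})$, so that $A_t \in \mfA$, $x \in A_t^{\circ}$, and $A_t \subset B_\eps(x)$. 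Direct comparison of endpoints shows $A_s \subset A_t^{\circ} = (a - t, a + t) \times (b - t, b + t)$ for $s < t$. For the boundary cases I would replace the first factor by $[0, t]$ when $a = 0$, and the second factor by $(1/t, \infty]$ when $b = \infty$, and check nesting in each case.

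Third, for the finite covering property, any $B \in \bdd(\Delta)$ has compact closure in $\Delta$, which is closed in $\bar\R^2$ and disjoint from the extended diagonal $\{(x,x) : x \in [0, \infty]\}$; hence there exist $M < \infty$ and $\delta > 0$ with $\bar B \subset [0, M] \times [0, \infty]$ and $y - x \ge \delta$ for all $(x, y) \in B$. Setting $t_k = k\delta/3$ for $0 \le k \le K$ with $t_K > M + \delta$, and appending $\infty$, the finite collection of sets
\[
(t_{i-1}, t_i] \times (t_{j-1}, t_j], \quad [0, t_1] \times (t_{j-1}, t_j], \quad (t_{i-1}, t_i] \times (t_K, \infty]
\]
with $t_{j-1} \ge t_i$ lies in $\mfA$ and covers $B$: for any $(x, y) \in B$, the cell containing $x$ has right endpoint $t_i < x + \delta/3$, and then $y \ge x + \delta > t_i + 2\delta/3$ forces the $y$-cell to satisfy $t_{j-1} \ge t_i$.

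The main obstacle is Step 2. Naively varying a single parameter produces a continuum of rectangles whose boundaries \emph{intersect} along common axis-parallel edges and corners, so disjointness of boundaries cannot be read off from endpoint comparisons directly. The trick is that strict nesting $A_s \subset A_t^{\circ}$ \emph{automatically} makes $\partial A_s$ disjoint from $\partial A_t$; producing a nested subfamily by shrinking all four parameters simultaneously sidesteps the combinatorics of edge overlaps. A secondary subtlety is adapting the construction at points on the boundary of $\Delta$ (first coordinate $0$, second coordinate $\infty$), where one must use the alternate form of sets in $\mfA$ and, near $\infty$, the neighborhood basis consisting of sets of the form $(M, \infty]$.
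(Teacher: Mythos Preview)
Your approach is the same as the paper's: both invoke Theorem~\ref{thm:convergence-determining-class-criterion}, and the paper's proof simply asserts the hypotheses are ``clear'' without further detail. Your writeup is considerably more thorough---you actually construct the nested rectangles, handle the boundary cases $a=0$ and $b=\infty$, and verify the finite covering property (which the paper's two-sentence proof omits entirely). One minor slip: the implication ``$A_s \subset A_t^\circ \Rightarrow \partial A_s \subset A_t^\circ$'' is false as a general topological fact; what you actually need and what your explicit endpoint comparison establishes is the stronger containment $\overline{A_s} \subset A_t^\circ$, from which $\partial A_s \cap \partial A_t = \emptyset$ does follow.
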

\begin{proof}
 It suffices to check the conditions in 
 Theorem~\ref{thm:convergence-determining-class-criterion}.
 It is clear that $\mfA$ is closed under finite intersection and
 $\partial \mfA_{x, \eps}$ contains countably many disjoint sets
 for any $x \in \Delta$ and $\eps >0$. 
\end{proof}

\section{Simplicial complex and Homology}\label{ap:topology}

\subsection{Simplicial complex}\label{sec:sc}
We first introduce a combinatorial object called simplicial complex. Let $P=\{1,\dots,n\}$ be a finite set (not necessary to be points in a metric space). A {\em simplicial complex} with the vertex set $P$ is defined by a collection $K$ of subsets in $P$ satisfying the following properties:
\begin{itemize}
\item[(i)] $\{i\}\in K$ for $i=1,\dots,n$, and
\item[(ii)] if $\sigma\in K$ and $\tau\subset \sigma$, then $\tau\in K$.
\end{itemize}

Each subset $\sigma$ with $q+1$ vertices is called a $q$-simplex. We denote the set of $q$-simplices by $K_q$. A subcollection $T\subset K$ which also becomes a simplicial complex is called a subcomplex of $K$.

\begin{ex}\label{exam:sc}
Figure \ref{fig:sc} shows two polyhedra of simplicial complexes
\begin{align*}
&K=\{
\{1\},
\{2\},
\{3\},
\{1,2\},
\{1,3\},
\{2,3\},
\{1,2,3\}\},\\
&T=\{
\{1\},
\{2\},
\{3\},
\{1,2\},
\{1,3\},
\{2,3\}\}.
\end{align*}

\begin{figure}[htbp]
 \begin{center}
  \includegraphics[width=50mm]{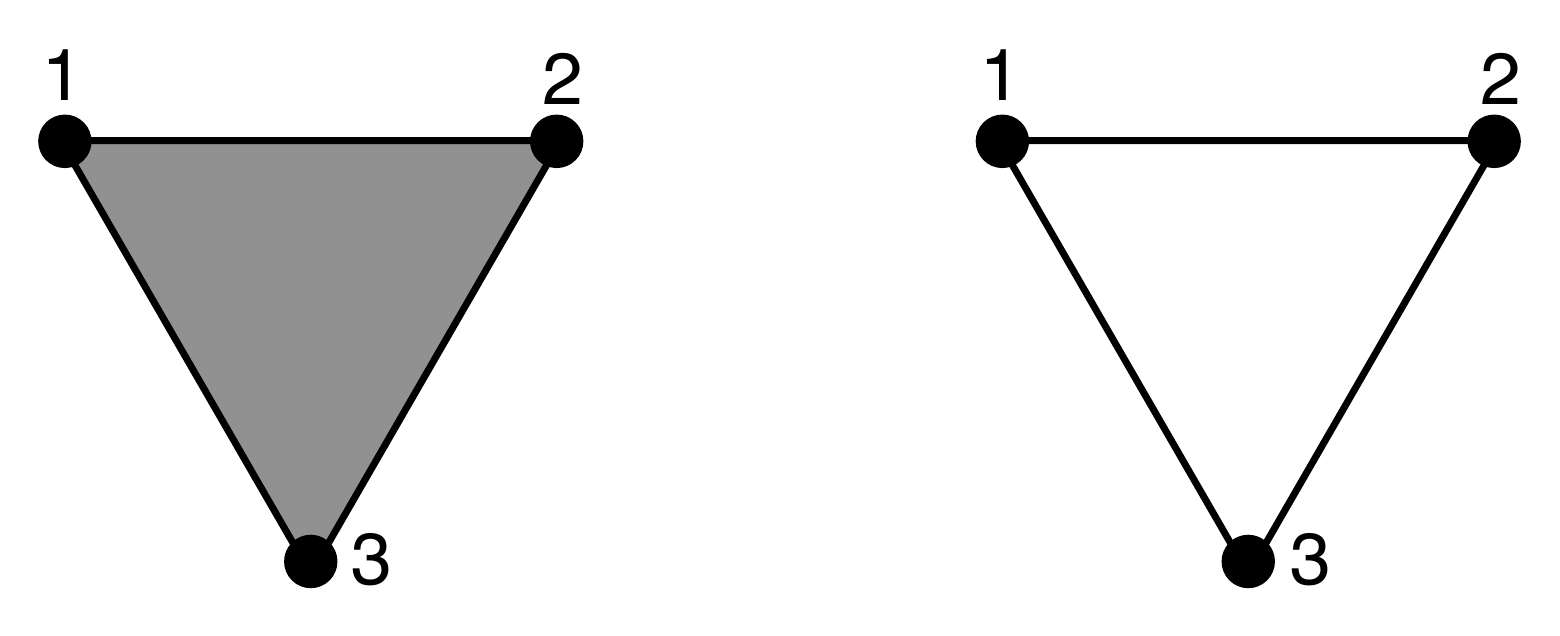}
 \end{center}
  \caption{The polyhedra of the simplicial complexes $K$ (left) and $T$ (right).}
  \label{fig:sc}
\end{figure}
 \end{ex}

\subsection{Homology}\label{sec:homology}
The procedure to define homology is summarized as follows:
\begin{enumerate}
\item Given a simplicial complex $K$, build a chain complex  $C_*(K)$. This is an algebraization of $K$ characterizing the boundary. 
\item Define homology by quotienting out  certain subspaces in $C_*(K)$ characterized by the boundary. 
\end{enumerate}

We begin with the procedure 1 by assigning orientations on simplices. 
When we deal with a $q$-simplex $\sigma=\{i_0,\dots,i_q\}$ as an ordered set, there are $(q+1)!$ orderings on $\sigma$. For $q>0$, we define an equivalence relation $i_{j_0},\dots,i_{j_q}  \sim  i_{\ell_0},\dots,i_{\ell_q} $ on two orderings of $\sigma$ such that they are mapped to each other by even permutations. 
By definition, two equivalence classes exist, and each of them is called an oriented simplex. 
An oriented simplex is denoted by $\langle i_{j_0},\dots,i_{j_q} \rangle$, and its opposite orientation is expressed by adding the minus $-\langle i_{j_0},\dots,i_{j_q} \rangle$.
We write $\langle\sigma\rangle=\langle i_{j_0},\dots,i_{j_q} \rangle$ for the equivalence class including $i_{j_0}<\dots<i_{j_q}$. For $q=0$, we suppose that we have only one orientation for each vertex. 

Let $\bF$ be a field. We construct a $\bF$-vector space $C_q(K)$ as 
\begin{align*}
C_q(K)={\rm Span}_\bF\{\langle \sigma\rangle\mid \sigma \in K_q\}
\end{align*}
for $K_q\neq\emptyset$ and $C_q(K)=0$ for $K_q=\emptyset$.
Here, ${\rm Span}_\bF(A)$ for a set $A$ is a vector space over $\bF$ such that the elements of $A$ formally form a basis of the vector space.
Furthermore, we define a linear map called the {\em boundary map} $\partial_q \colon  C_q(K)\rightarrow C_{q-1}(K)$ by 
the linear extension of 
\begin{align}\label{eq:boundary}
\partial_q\langle i_0,\dots,i_q\rangle=\sum_{\ell=0}^q(-1)^\ell\langle i_0,\dots,\widehat{i_\ell},\dots,i_q\rangle,
\end{align}
where $\widehat{i_\ell}$ means the removal of the vertex $i_\ell$. We can regard the linear map $\partial_q$ as algebraically capturing the $(q-1)$-dimensional boundary of a $q$-dimensional object. 

For example, the image of the $2$-simplex $\langle\sigma\rangle=\langle 1,2,3\rangle$ is given by 
$\partial_2\langle\sigma\rangle=\langle2,3\rangle-\langle1,3\rangle+\langle1,2\rangle$, which is the boundary of $\sigma$ (see Figure \ref{fig:sc}). 

In practice, by arranging some orderings of the oriented $q$- and $(q-1)$- simplices, we can represent the boundary map as a matrix 
\[
	M_q=(M_{\sigma,\tau})_{\sigma\in K_{q-1},\tau\in K_q}
\]
with the entry $M_{\sigma,\tau}=0,\pm 1$ given by the coefficient in \eqref{eq:boundary}. For the simplicial complex $K$ in Example \ref{exam:sc}, the matrix representations $M_1$ and $M_2$ of the boundary maps are  given by 
\begin{align}\label{eq:matrix}
M_2=\left[
\begin{array}{r}
1\\
1\\
-1
\end{array}\right],\quad
M_1=\left[
\begin{array}{rrr}
-1& 0&-1\\
 1&-1&0\\
 0& 1&1
\end{array}
\right]
\end{align}
Here the $1$-simplices (resp. $0$-simplices) are ordered by $\langle 1,2\rangle, \langle2,3\rangle, \langle 1,3\rangle$ (resp. $\langle1\rangle$, $\langle2\rangle$, $\langle3\rangle$).

We call a sequence of the vector spaces and linear maps
\begin{align*}
\xymatrix{
\cdots\ar[r] & C_{q+1}(K)\ar[r]^{\partial_{q+1}} & C_q(K)\ar[r]^{\partial_q} & C_{q-1}(K)\ar[r] & \cdots
}
\end{align*}
the {\em chain complex} $C_*(K)$ of $K$. As an easy exercise, we can show $\partial_{q}\circ \partial_{q+1}=0$ for every $q$. Hence, the subspaces $Z_q(K)={\rm ker}\partial_q$ and $B_q(K)={\rm im}\partial_{q+1}$ satisfy $B_q(K)\subset Z_q(K)$. Then, the {\em $q$th (simplicial) homology} is defined by taking the quotient space
\begin{align*}
H_q(K)=Z_q(K)/B_q(K).
\end{align*}
Intuitively, the dimension of $H_q(K)$ counts the number of $q$-dimensional holes in $K$ and each generator of the vector space $H_q(K)$ corresponds to these holes. We remark that the homology as a vector space is independent of the orientations of simplices. 

For a subcomplex $T$ of $K$, the inclusion map $\iota \colon T\hookrightarrow K$ naturally induces a linear map in homology $\iota_q\colon  H_q(T)\rightarrow H_q(K)$. Namely, an element $[c]\in H_q(T)$ is mapped to $[c]\in H_q(K)$, where the equivalence class $[c]$ is taken in each vector space. 

For example, the simplicial complex $K$ in Example \ref{exam:sc} has 
\[
Z_1(K)={\rm Span}_\bF[\begin{array}{ccc}1 & 1 & -1\end{array}]^T=B_1(K)
\]
 from (\ref{eq:matrix}). Hence $H_1(K)=0$, meaning that there are no $1$-dimensional hole (ring) in $K$.
On the other hand, since $Z_1(T)=Z_1(K)$ and $B_1(T)=0$, we have $H_1(T)\simeq \bF$, meaning that $T$ consists of one ring. Hence, the induced linear map $\iota_1\colon  H_1(T)\rightarrow H_1(K)$ means that the ring in $T$ disappears in $K$ under $T\hookrightarrow K$.

\section{Continuity of persistence diagrams of $\kappa$-complexes} \label{sec:app_stability}
We give a stability result for persistence diagrams of $\kappa$-filtrations 
which extends the stability result obtained in \cite{cdo}. 
The notation used here follows the paper \cite{cdo}. 
We first recall the definition of the Hausdorff distance
and the bottleneck distance. 
The Hausdorff distance $d_H$ on $\finite(\bR^{N})$
for $\sigma, \sigma' \in \finite(\bR^{N})$ is given by 
\[
 d_H(\sigma, \sigma') = \max \left\{\max_{x \in \sigma} \inf_{x' \in \sigma'}\|x - x'\|, \ 
 \max_{x' \in \sigma'}  \inf_{x \in \sigma} \|x - x'\| \right\}. 
\]
We define the $\ell_{\infty}$-metric on $\Delta$ by
$d_{\infty}((b_1,d_1), (b_2,d_2)) = \max(|b_1-b_2|, |d_1 - d_2|)$, where $\infty- \infty =
0$. For $(b,d) \in \Delta$, we define $d_{\infty}((b,d), \partial \Delta) =d-b$. 
For finite multisets $X$ and $Y$ in $\Delta$,
a partial matching between $X$ and $Y$
is a subset $M \subset X \times Y$ such that 
for every $x \in X$ there is at most one $y \in Y$ such that $(x,y) \in
M$ and for every $y \in Y$ there is at most one $x \in X$ such that
$(x,y) \in M$. An $x \in X$ (resp. $y \in Y$)
is unmatched if there is no $y \in Y$ (resp. $x \in X$)
such that $(x,y) \in M$. 
We say that a partial matching $M$ is
$\delta$-matching if $d_{\infty}(x, y) \le \delta$ for every $(x,y) \in M$,
$d_{\infty}(x, \partial \Delta) \le \delta$ if $x \in X$ is unmatched, 
and $d_{\infty}(y, \partial \Delta) \le \delta$
if $y \in Y$ is unmatched. 

The bottleneck distance is defined as follows
\[
 d_B(X, Y) := \inf\{\delta > 0 : \text{there exists a
 $\delta$-matching between $X$ and $Y$}\}. 
\]

For $\fp, \fp' \in \finite(\bR^{N})$ and $\kappa, \kappa' \colon  \finite(\bR^{N}) \to
[0,\infty]$, we define two complexes
 \[
  \K_{\kappa}(\fp) = \{K_{\kappa}(\fp,t)\}_{t \ge 0}, \quad 
  \K_{\kappa'}(\fp') = \{K_{\kappa'}(\fp',t)\}_{t \ge 0}. 
 \]
 Let $C$ be a \textit{correspondence} between $\fp$ and $\fp'$, i.e.,
 $C \subset \fp \times \fp'$ such that $p_1(C) = \fp$ and $p_2(C) = \fp'$, where
 $p_i$ is the projection onto the $i$th coordinate for $i=1,2$.
 We define the transpose $C^T$ of $C$, which is also a correspondence,
 by
 \[
  C^T := \{(x', x) \in \fp' \times \fp : (x, x') \in C\}. 
 \]
A correspondence $C$ defines a map from $2^\fp \setminus \emptyset$
 to $2^{\fp'} \setminus \emptyset$ as
 \[
  C(\sigma) = \{x' \in \fp' : (x, x') \in C, x \in \sigma\}. 
 \]
The distortion of $C$ is defined as
 \[
  dis(C) := \max\bigg\{
  \sup_{\sigma \subset \fp} |\kappa(\sigma) -  \kappa'(C(\sigma))|,
  \sup_{\sigma' \subset \fp'} |\kappa(C^T(\sigma')) -
  \kappa'(\sigma')| \bigg\}.  
 \]
\begin{lem}\label{lem:C2}
 If $dis(C) \le \eps$, then $H_q(\K_{\kappa}(\fp))$ and
 $H_q(\K_{\kappa'}(\fp'))$ are $\eps$-interleaving. 
\end{lem}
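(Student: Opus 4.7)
The plan is to realize the $\eps$-interleaving at the simplicial level by using the correspondence $C$ to build simplicial maps between the filtrations, shifted by $\eps$. Recall that an $\eps$-interleaving between persistence modules
$\U = (U_t, f_t^s)$ and $\V = (V_t, g_t^s)$ consists of families of linear maps
$\phi_t \colon U_t \to V_{t+\eps}$ and $\psi_t \colon V_t \to U_{t+\eps}$ such that
$\psi_{t+\eps} \circ \phi_t = f_t^{t+2\eps}$,
$\phi_{t+\eps} \circ \psi_t = g_t^{t+2\eps}$,
and the appropriate squares with $f$ and $g$ commute. We will construct such maps on $H_q$.

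First, I would pick arbitrary selections $f\colon \fp \to \fp'$ and $g\colon \fp' \to \fp$ subordinate to $C$; that is, $(x, f(x)) \in C$ for every $x \in \fp$ and $(g(x'), x') \in C$ for every $x' \in \fp'$. I would then claim that $f$ extends to a simplicial map
\[
f_t \colon K_{\kappa}(\fp, t) \longrightarrow K_{\kappa'}(\fp', t+\eps)
\]
for every $t \ge 0$. Indeed, if $\sigma \subset \fp$ with $\kappa(\sigma) \le t$, then $f(\sigma) \subset C(\sigma)$, so by monotonicity (K1) and the distortion bound,
\[
\kappa'(f(\sigma)) \le \kappa'(C(\sigma)) \le \kappa(\sigma) + \eps \le t + \eps.
\]
Symmetrically, $g$ extends to simplicial maps $g_t\colon K_{\kappa'}(\fp', t) \to K_{\kappa}(\fp, t+\eps)$. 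The maps are compatible with the inclusions defining the two filtrations because they are defined vertex-wise, so we obtain induced linear maps $\phi_t := (f_t)_* \colon H_q(K_{\kappa}(\fp, t)) \to H_q(K_{\kappa'}(\fp', t+\eps))$ and $\psi_t := (g_t)_*$ that commute with the structure maps of the persistence modules.

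The heart of the argument is to verify the interleaving identity $\psi_{t+\eps} \circ \phi_t = \iota_t^{t+2\eps}$, where $\iota_t^{t+2\eps}$ is the map induced on $H_q$ by inclusion $K_{\kappa}(\fp, t) \hookrightarrow K_{\kappa}(\fp, t+2\eps)$, and similarly for $\phi_{t+\eps} \circ \psi_t$. I would show that the composition $g_t \circ f_t$ and the inclusion are \emph{contiguous} as simplicial maps $K_{\kappa}(\fp, t) \to K_{\kappa}(\fp, t+2\eps)$, which implies they induce the same map on homology. For $\sigma \in K_{\kappa}(\fp, t)$, note that $\sigma \subset C^T(C(\sigma))$ because $(x, f(x)) \in C$ for all $x \in \sigma$, and similarly $g(f(\sigma)) \subset C^T(C(\sigma))$. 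Hence $\sigma \cup g(f(\sigma)) \subset C^T(C(\sigma))$, and applying (K1) together with the distortion bound twice yields
\[
\kappa\bigl(\sigma \cup g(f(\sigma))\bigr) \le \kappa\bigl(C^T(C(\sigma))\bigr) \le \kappa'(C(\sigma)) + \eps \le \kappa(\sigma) + 2\eps \le t + 2\eps,
\]
so $\sigma \cup g(f(\sigma))$ is a simplex of $K_{\kappa}(\fp, t+2\eps)$, giving the required contiguity. The symmetric statement for $\phi_{t+\eps} \circ \psi_t$ is identical after swapping the roles of $(\fp, \kappa, f)$ and $(\fp', \kappa', g)$.

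The main potential obstacle is the dependence on the choice of selections $f, g$: different selections yield a priori different chain maps. This is handled by observing that any two selections subordinate to $C$ produce contiguous simplicial maps (the argument is the same as above, using that both selections land in $C(\sigma)$), so the induced maps on $H_q$ coincide. With all pieces assembled, $\{\phi_t\}$ and $\{\psi_t\}$ form an $\eps$-interleaving between $H_q(\K_{\kappa}(\fp))$ and $H_q(\K_{\kappa'}(\fp'))$, completing the proof.
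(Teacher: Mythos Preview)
Your proof is correct. The key inequality $\kappa'(f(\sigma)) \le \kappa'(C(\sigma)) \le \kappa(\sigma) + \eps$ is exactly the computation the paper carries out, and your contiguity argument using $\sigma \cup g(f(\sigma)) \subset C^T(C(\sigma))$ together with the two-fold application of the distortion bound is sound.

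The difference from the paper's proof is one of packaging rather than substance. The paper stops after verifying that $C$ (and symmetrically $C^T$) is an $\eps$-simplicial correspondence in the sense of \cite{cdo}, and then invokes Proposition~4.2 of that reference to obtain the interleaving. What you have written is essentially a self-contained proof of that cited proposition in the present setting: you choose selections $f,g$ subordinate to $C$, promote them to simplicial maps between the shifted filtrations, and establish the interleaving triangles via contiguity. The paper's route is shorter because it outsources this standard step; your route has the advantage of being fully self-contained and makes explicit where property~(K1) enters (both in bounding $\kappa'(f(\sigma))$ by $\kappa'(C(\sigma))$ and in the contiguity estimate).
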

\begin{proof}
 Assume that $\sigma \in K_{\kappa}(\fp,t)$ and $\kappa(\sigma) \le t$.
 Then it follows from $|\kappa(\sigma) - \kappa'(C(\sigma))| \le \eps$ that
 \[
 \kappa'(\sigma') \le \kappa'(C(\sigma)) \le \kappa(\sigma)  + \eps
 \le t+\eps  \text{ for any $\sigma' \subset C(\sigma)$},
 \]
 which implies $\sigma' \in K_{\kappa'}(\fp', t + \eps)$, and hence
 $C$ is $\eps$-simplicial from $\K_{\kappa}(\fp)$ to
 $\K_{\kappa'}(\fp')$. Symmetrically, $C^T$ is also
 $\eps$-simplicial.
 Therefore, the conclusion follows from Proposition~4.2 in \cite{cdo}. 
\end{proof}

Let us define
\begin{equation}\label{defn-of-S}
 S((\kappa,\fp), (\kappa', \fp'))
 := \sup_{\sigma \subset \fp, \sigma' \subset \fp' \atop{d_H(\sigma,
 \sigma') \le d_H(\fp, \fp')}} |\kappa(\sigma) - \kappa'(\sigma')|. 
\end{equation}
We remark that $S((\kappa,\fp), (\kappa', \fp')) = \|\kappa -
\kappa'\|_{\infty}$ if $\fp=\fp'$.

 \begin{lem}\label{lem:C1}
  Let $C$ denote the correspondence defined by 
 $C = \{(x, x') \in \fp \times \fp' : \|x - x'\| \le
  d_H(\fp,\fp')\}$. Then,
  \[
   dis(C) \le S((\kappa, \fp), (\kappa', \fp')). 
  \]
 \end{lem}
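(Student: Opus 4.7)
The plan is to show that the natural proximity correspondence $C$ sends any subsimplex $\sigma\subset \fp$ to a subsimplex $C(\sigma)\subset \fp'$ whose Hausdorff distance from $\sigma$ is controlled by $d_H(\fp,\fp')$, and then read off the distortion bound directly from the definition \eqref{defn-of-S} of $S$.

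First I would verify that $C$ is an honest correspondence between $\fp$ and $\fp'$, i.e., $p_1(C)=\fp$ and $p_2(C)=\fp'$. This is immediate from the definition of the Hausdorff distance: for each $x\in\fp$ there exists $x'\in\fp'$ with $\|x-x'\|\le d_H(\fp,\fp')$, so $(x,x')\in C$; the same argument applied with the roles reversed covers $p_2(C)=\fp'$.

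Next, for any nonempty $\sigma\subset\fp$, I claim that $d_H(\sigma,C(\sigma))\le d_H(\fp,\fp')$. Indeed, for each $x\in\sigma$, arguing as above yields some $x'\in\fp'$ with $\|x-x'\|\le d_H(\fp,\fp')$, and such $x'$ automatically lies in $C(\sigma)$; this controls the one-sided Hausdorff distance $\sup_{x\in\sigma}\inf_{x'\in C(\sigma)}\|x-x'\|$. Conversely, every $x'\in C(\sigma)$ is by definition paired with some $x\in\sigma$ satisfying $\|x-x'\|\le d_H(\fp,\fp')$, which controls the other one-sided distance. By the very definition of $S((\kappa,\fp),(\kappa',\fp'))$ in \eqref{defn-of-S}, this forces
\[
|\kappa(\sigma)-\kappa'(C(\sigma))|\le S((\kappa,\fp),(\kappa',\fp')).
\]

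An entirely symmetric argument applied to $C^T$ (which is the correspondence defined by the same inequality with the roles of $\fp$ and $\fp'$ swapped, since $d_H$ is symmetric) yields
\[
|\kappa(C^T(\sigma'))-\kappa'(\sigma')|\le S((\kappa,\fp),(\kappa',\fp'))
\]
for every $\sigma'\subset\fp'$. Taking suprema over $\sigma$ and $\sigma'$ and then the maximum gives $dis(C)\le S((\kappa,\fp),(\kappa',\fp'))$, which is the desired bound. There is no real obstacle here; the only point that requires a moment's care is checking both one-sided Hausdorff inequalities when comparing $\sigma$ with $C(\sigma)$, since a priori $C(\sigma)$ might look larger than necessary.
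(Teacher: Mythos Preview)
Your proof is correct and follows essentially the same approach as the paper: establish $d_H(\sigma, C(\sigma)) \le d_H(\fp,\fp')$ and the symmetric inequality for $C^T$, then read off the distortion bound from the definition of $S$. The paper's version simply asserts these two Hausdorff inequalities without spelling out the two one-sided checks, which you have supplied.
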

\begin{proof}
We easily see that
\[
\sup_{\sigma \subset \fp} d_H(\sigma, C(\sigma)) \le d_H(\fp,\fp') 
~~{\rm and}~~
\sup_{\sigma' \subset \fp'} d_H(C^T(\sigma'), \sigma') \le d_H(\fp,\fp')
 \]
which implies the assertion.
\end{proof}

 For $D_q(\kappa, \Xi) = D_q( \K_{\kappa}(\Xi) ) $ and 
 $D_q(\kappa', \Xi') = D_q( \K_{\kappa'}(\Xi') ) $, we obtain the following
 continuity result. 
 \begin{thm}
\begin{equation}
d_B(D_q(\kappa,\fp), D_q(\kappa', \fp')) \le S((\kappa,\fp), (\kappa',
 \fp')).
 \label{eq:continuity}
\end{equation} 
 \end{thm}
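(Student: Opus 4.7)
The plan is to combine the two preparatory lemmas of this appendix with the algebraic stability theorem for persistence modules. First, set $\eps := S((\kappa,\fp),(\kappa',\fp'))$. If $\eps = \infty$ the bound is trivial, so I assume $\eps < \infty$ (in particular $d_H(\fp,\fp') < \infty$, since otherwise the supremum defining $S$ is vacuous and one should handle this edge case separately by observing that $\fp$ and $\fp'$ cannot both be finite with $d_H(\fp,\fp')=\infty$).

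Next, I would apply Lemma~\ref{lem:C1} to the particular correspondence
\[
C = \{(x,x') \in \fp \times \fp' : \|x-x'\| \le d_H(\fp,\fp')\},
\]
which yields $dis(C) \le \eps$. Feeding this into Lemma~\ref{lem:C2}, the persistence modules $H_q(\K_{\kappa}(\fp))$ and $H_q(\K_{\kappa'}(\fp'))$ are $\eps$-interleaved. Both are tame because $\fp$ and $\fp'$ are finite, so their persistence diagrams $D_q(\kappa,\fp)$ and $D_q(\kappa',\fp')$ are well-defined.

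To conclude, I would invoke the algebraic stability theorem of Chazal--de~Silva--Glisse--Oudot (the isometry theorem, cf.~\cite{cdo}): if two $q$-tame persistence modules are $\eps$-interleaved, then the bottleneck distance between their persistence diagrams is at most $\eps$. Applying it to our two modules gives
\[
d_B(D_q(\kappa,\fp), D_q(\kappa',\fp')) \le \eps = S((\kappa,\fp),(\kappa',\fp')),
\]
which is the desired inequality.

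The proof has essentially no obstacles: both preparatory lemmas are already proved in the excerpt, and the bottleneck--interleaving inequality is a standard result from persistence theory. The only mild care required is the trivial $\eps = \infty$ case and checking that the interleaving delivered by Lemma~\ref{lem:C2} is exactly in the sense required by the algebraic stability theorem (which it is, since the $\eps$-simplicial correspondence produces the shifted inclusion-induced maps that define an interleaving at the module level).
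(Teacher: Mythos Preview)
Your proposal is correct and follows essentially the same approach as the paper: combine Lemma~\ref{lem:C1} and Lemma~\ref{lem:C2} to obtain an $S((\kappa,\fp),(\kappa',\fp'))$-interleaving between the persistence modules, then invoke the algebraic stability theorem to bound the bottleneck distance. The paper's proof is just a two-line version of what you wrote (it cites \cite{cdgo} for the interleaving-to-bottleneck step), and your extra remarks on the $\eps=\infty$ case and tameness are harmless elaborations.
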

 \begin{proof}
It follows from Lemma~\ref{lem:C1} and Lemma~\ref{lem:C2} that 
$H_q(\K_{\kappa}(\fp))$ and $H_q(\K_{\kappa'}(\fp'))$ are $S((\kappa,\fp),
  (\kappa', \fp'))$-interleaving. Therefore, we obtain
  \eqref{eq:continuity} from \cite{cdgo}. 
 \end{proof}

 \begin{cor}
  Suppose that $\kappa$ is Lipschitz continuous with respect to
  $d_H$, i.e., there exists a constant $\gamma>0$ such that 
 \[
  |\kappa(\sigma) - \kappa(\sigma')| \le \gamma d_H(\sigma, \sigma')
  \text{ for $\sigma, \sigma' \in \finite(\bR^{N})$}. 
\]
Then, 
\begin{equation}
 d_B(D_q(\kappa,\fp), D_q(\kappa, \fp')) \le \gamma d_H(\fp,\fp'). 
 \label{eq:continuity2}
\end{equation} 
 \end{cor}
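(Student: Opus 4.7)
The plan is to deduce this corollary as an immediate application of the preceding theorem, which gives the bound $d_B(D_q(\kappa,\fp), D_q(\kappa', \fp')) \le S((\kappa,\fp), (\kappa', \fp'))$. Specializing to $\kappa' = \kappa$, the task reduces to estimating the quantity $S((\kappa,\fp),(\kappa,\fp'))$ using only the Lipschitz hypothesis on $\kappa$.

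Recall from \eqref{defn-of-S} that
\[
S((\kappa,\fp), (\kappa, \fp'))
 = \sup_{\sigma \subset \fp, \sigma' \subset \fp' \atop{d_H(\sigma, \sigma') \le d_H(\fp, \fp')}} |\kappa(\sigma) - \kappa(\sigma')|.
\]
For every pair $(\sigma, \sigma')$ appearing in the supremum, the Lipschitz hypothesis yields $|\kappa(\sigma) - \kappa(\sigma')| \le \gamma\, d_H(\sigma, \sigma') \le \gamma\, d_H(\fp, \fp')$. Taking the supremum over such pairs gives $S((\kappa,\fp),(\kappa,\fp')) \le \gamma\, d_H(\fp, \fp')$.

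Combining this with the preceding theorem produces \eqref{eq:continuity2} immediately. Since every step is a direct substitution and the hard work has already been absorbed into the interleaving argument of Lemma~\ref{lem:C1} and Lemma~\ref{lem:C2}, there is no significant obstacle to overcome here; the corollary is essentially a restatement of the theorem under the Lipschitz assumption.
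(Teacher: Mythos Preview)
Your proof is correct and follows essentially the same approach as the paper: specialize the preceding theorem to $\kappa' = \kappa$, then use the Lipschitz hypothesis together with the constraint $d_H(\sigma,\sigma') \le d_H(\fp,\fp')$ in the definition of $S$ to bound $S((\kappa,\fp),(\kappa,\fp')) \le \gamma\, d_H(\fp,\fp')$. The paper's argument is identical, only more terse.
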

\begin{proof}
 From the assumption and \eqref{defn-of-S}, we see that 
 \[
 S((\kappa, \fp), (\kappa, \fp')) \le \gamma d_H(\fp, \fp'). 
 \]
 Therefore, \eqref{eq:continuity2} follows. 
\end{proof}

\section*{Acknowledgment}
The authors would like to thank Ippei Obayashi and Kenkichi Tsunoda for useful discussions. 
All authors are partially supported by JST CREST Mathematics (15656429). 
T.S. is partially supported by JSPS Grant-in-Aid (26610025, 26287019). T.K.D. is partially supported by JSPS Grant-in-Aid for Young Scientists (B) (16K17616).

\bibliographystyle{acmtrans-ims}
\bibliography{bpp}

\end{document}